\DeclarePairedDelimiter{\abs}{\lvert}{\rvert}
\newcommand{\samewd}[3]{{\newbox\fix\mathchoice{\setbox\fix=\hbox{$\displaystyle #2$}\mathmakebox[\wd\fix][#1]{#3}}{\setbox\fix=\hbox{$#2$}\mathmakebox[\wd\fix][#1]{#3}}{\setbox\fix=\hbox{$\scriptstyle #2$}\mathmakebox[\wd\fix][#1]{#3}}{\setbox\fix=\hbox{$\scriptscriptstyle #2$}\mathmakebox[\wd\fix][#1]{#3}}}}
\newcommand{\Mobius}{M\"obius }
\def\Re{\mathop{\mathsf{Re}}}
\def\Im{\mathop{\mathsf{Im}}}
\newcommand{\Cayley}{\mathcal H}
\newcommand{\iu}{i}
\newcommand{\N}{\mathbb{N}}
\newcommand{\R}{\mathbb{R}}
\newcommand{\C}{\mathbb{C}}
\newcommand{\uH}{\mathbb{H}}
\newcommand{\uHc}{\overline{\mathbb{H}}}
\newcommand{\RS}{\hat{\mathbb{C}}}
\newcommand{\D}{\mathbb{D}}
\newcommand{\UC}{\mathbb{T}}
\newcommand{\diam}{\mathop{\mathsf{diam}}\nolimits}
\newcommand{\de}{\partial}
\newcommand{\clskip}{0.1em}
\newcommand{\clskipii}{0.033em}
\newcommand{\UDc}{\hskip-\clskipii{\overline{\hskip\clskipii\mathbb D\hskip-\clskip}\hskip\clskip}}%
\newcommand{\classSigma}{\mathrm{\Sigma}}
\newcommand{\dfn}{:=}
\newcommand{\compfrac}[2]{\raise.35ex\hbox{$#1$}\kern-.125em\hbox{/}\kern-.1em\lower.4ex\hbox{$#2$}}
\newcommand{\clS}{\mathcal{S}}
\newcommand{\Maponto}
{\xrightarrow{\hbox{\lower.2ex\hbox{$\scriptstyle \smash{\mathsf{onto}}$}}\,}}
\newcommand{\Mapinto}
{\xrightarrow{\hbox{\lower.2ex\hbox{$\scriptstyle \smash{\mathsf{into}}$}}\,}}
\newcommand{\step}[2]{\begin{itemize}\item[\textit{Step\,#1:}]\textit{#2}\end{itemize}}
\newcommand{\STOP}{\par\hbox to\textwidth{\color{red}\leaders\hbox{\,STOP\,}\hfil}\par}
\newcommand{\mcite}[1]{\csname b@#1\endcsname}
\def\dist{\mathop{\mathsf{dist}}}
\def\id{{\sf id}}
\let\Natural=\N
\let\Real=\R
\newcommand{\RealE}{{\hat{\mathbb R}}}
\let\UD=\D
\let\ComplexE=\RS
\let\Complex=\C
\let\UHi=\uH
\newcounter{results}[section]
\newcounter{PreResults}[section]
\theoremstyle{definition}
\newtheorem{definition}[results]{Definition}
\theoremstyle{plain}
\newtheorem{theorem}[results]{Theorem}
\newtheorem{proposition}[results]{Proposition}
\newtheorem{lemma}[results]{Lemma}
\newtheorem{corollary}[results]{Corollary}
\newtheorem{pretheorem}[PreResults]{Theorem}
\newtheorem{preproposition}[PreResults]{Proposition}
\theoremstyle{remark}
\newtheorem{remark}[results]{Remark}
\newtheorem*{unnumremark}{Remark}
\begin{document}

\title[Chordal Loewner Equation]{Chordal Loewner Equation}
\author[A. del Monaco]{Andrea del Monaco}
\author[P. Gumenyuk]{Pavel Gumenyuk}
\address{Dipartimento di Matematica, Universit\`a degli Studi di Roma ``Tor Vergata'',
Via della Ricerca Scientifica 1, 00133, Roma, Italia.}
\email{delmonac@mat.uniroma2.it}\email{gumenyuk@mat.uniroma2.it}
\thanks{Supported by the FIRB programme ``Futuro in Ricerca 2008'', project \textit{Geometria Differenziale Complessa e Dinamica Olomorfa.}}

\subjclass[2010]{Primary 30C35; Secondary 30C55, 30C20, 30C80}

\begin{abstract}
The aim of this survey paper is to present a complete {\it direct proof} of the well celebrated cornerstone result in Loewner Theory, originally due to Kufarev \textsl{et al}~\cite{KufarevEtAl:FuncUnvHalfPl}, stating that the family of the hydrodynamically normalized conformal self-maps of the upper-half plane onto the complement of a gradually erased slit satisfies, under a suitable parametrization, the chordal Loewner differential equation. The proof is based solely on basic theorems of Geometric Function Theory combined with some elementary topological facts and does not require any advanced technique.
\end{abstract}

\maketitle

\section{Introduction}
In 1923 Loewner~\cite{Loewner:schlichte} introduced a method of the so called \textit{Parametric Representation} for the class~$\clS$ of all univalent holomorphic functions ${f\colon \UD:=\{z\in\Complex\colon |z|<1\}\to\Complex}$ normalized at the origin by $f(0)=0$ and $f'(0)=1$. Much later Kufarev \textsl{et al}~\cite{KufarevEtAl:FuncUnvHalfPl} constructed a similar representation for univalent holomorphic self-maps of the upper half-plane ${\UHi:=\{z\colon \Im z>0\}}$ with the hydrodynamic normalization at~$\infty$. The Parametric Representation Method was further developed by a number of specialists. Without attempting to give an exhaustive bibliography, we only mention the fundamental contributions of Kufarev~\cite{Kufarev:FamiliesOfAnalFunctions} and Pommerenke~\cite{Pommerenke:SubordinationChains},~\cite[Chapter~6]{Pommerenke:UnivalFunctions}. This powerful method has been used a lot in Geometric Function Theory, in particular, as an effective tool to solve extremal problems for univalent functions. One of the most remarkable examples in this connection is the crucial role of the Parametric Representation Method in de Branges' proof~\cite{deBranges:ProofBierberbach} of the famous Bieberbach Conjecture.

In 2000 Schramm in his well-known paper~\cite{Schramm:ScLimUnSpTre} employed the Parametric
Representation Method to study random curves in the plane as it appears to provide
fairly suitable conformally invariant coordinates in the set of all Jordan arcs
in a given simply connected domain joining two prescribed points, one of which
lies on the boundary and the other can be either an interior point
(\textit{radial case}) or a boundary point (\textit{chordal case}).

More details on the history and recent development of the topic can be found in
the survey paper~\cite{ABCDM:EvoLoewDiffEqn}.

One of the fundamental results due to Loewner underlying the Parametric
Representation Method in the radial case can be stated as follows.
\begin{theorem}[see, \textsl{e.g.}, \protect{\cite[Chapter III,
\S2]{Goluzin:GeThFuCo}}]\label{TH_Loewner} Let $D\subset\Complex$ be a simply connected
domain\footnote{The case $D=\Complex$ is \textit{not} excluded.} containing the
origin and let $\gamma$ be a Jordan arc lying in~$D\setminus\{0\}$ except for
one of its end-points, which belongs to~$\partial D$. Then there exists  a
unique continuous function ${\kappa\colon [0,T)\to\UC:=\de \UD}$,
${0<T\le+\infty}$, such that for a suitable parametrization ${\Gamma\colon [0,T]\to\overline D}$ of the arc $\gamma$ with $\Gamma(0)\in D$ and $\Gamma(T)\in\de D$ the family $(f_t)_{t\in[0,T)}$ consisting of the conformal mappings~$f_t\colon \UD\Maponto D\setminus\Gamma([t,T])$ normalized
by~$f_t(0)=0$, $f'_t(0)>0$, satisfies the equation
\begin{equation}\label{EQ_PDE-Loewner-radial}
\frac{\de f_t(z)}{\de t}=z\frac{\de f_t(z)}{\de
z}\frac{\kappa(t)+z}{\kappa(t)-z},
\end{equation}
Moreover, for each $s\in[0,T)$ and each $z\in\UD$ the function
$w_{z,s}(t):=f_t^{-1}\big(f_s(z)\big)$ is the unique solution to the following
Cauchy problem
\begin{equation}\label{EQ_ODE-Loewner-radial}
\frac{dw(t)}{dt}=-w(t)\frac{\kappa(t)+w(t)}{\kappa(t)-w(t)},\quad
t\in[s,T);~\quad w(s)=z.
\end{equation}
\end{theorem}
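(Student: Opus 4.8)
\emph{Setup and normalization.} The plan is to follow the classical scheme of Loewner and Pommerenke, with extra care taken because $D$ is allowed to be an arbitrary simply connected domain (including~$\mathbb{C}$). For $t\in[0,T)$ put $D_t:=D\setminus\Gamma([t,T])$; since $\widehat{\mathbb{C}}\setminus D_t=(\widehat{\mathbb{C}}\setminus D)\cup\Gamma([t,T])$ is a union of two continua meeting at $\Gamma(T)$, it is connected, so each $D_t$ is a simply connected proper subdomain of $\mathbb{C}$ containing $0$; let $f_t\colon\mathbb{D}\Maponto D_t$ be the Riemann map normalized by $f_t(0)=0$, $f_t'(0)>0$. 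For $s\le t$ one has $D_s\subseteq D_t$, so $\varphi_{s,t}:=f_t^{-1}\circ f_s$ is a univalent self-map of $\mathbb{D}$ fixing~$0$, with $\varphi_{s,t}'(0)=f_s'(0)/f_t'(0)\in(0,1]$, equal to $1$ only when $s=t$ (Schwarz lemma, together with injectivity of $\Gamma$), and $\varphi_{s,u}=\varphi_{t,u}\circ\varphi_{s,t}$. Using continuity of $\Gamma$ one checks that $t\mapsto D_t$ is continuous for the Carath\'eodory kernel (with respect to~$0$), so by the Carath\'eodory kernel theorem $t\mapsto f_t$ is continuous for local uniform convergence; hence $d(t):=f_t'(0)$ is continuous and, again by the Schwarz lemma, strictly increasing. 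Taking the new time parameter to be $\log\bigl(d(t)/d(0)\bigr)$ — that is, reparametrizing the arc $\gamma$ — we may assume $f_t'(0)=d(0)e^{t}$ on $[0,T)$, so that $\varphi_{s,t}'(0)=e^{s-t}$.

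\emph{The infinitesimal generator.} Fix $s$. For $s<t<T$ the function $\varphi_{s,t}(z)/z$ is holomorphic and nonvanishing on $\mathbb{D}$ with modulus $\le 1$ (Schwarz lemma), so $q_{s,t}(z):=-\tfrac{1}{t-s}\log\bigl(\varphi_{s,t}(z)/z\bigr)$ is holomorphic with $\Re q_{s,t}\ge 0$ and $q_{s,t}(0)=1$; by the Herglotz representation $q_{s,t}(z)=\int_{\mathbb{T}}\tfrac{\zeta+z}{\zeta-z}\,d\mu_{s,t}(\zeta)$ for a Borel probability measure $\mu_{s,t}$ on $\mathbb{T}$. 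Now $\varphi_{s,t}(\mathbb{D})=\mathbb{D}\setminus K_{s,t}$, where $K_{s,t}=f_t^{-1}\bigl(\Gamma([s,t))\bigr)$ is a Jordan arc joining the interior point $f_t^{-1}(\Gamma(s))$ to $\partial\mathbb{D}$; the tip of the slit $\Gamma([s,T])$ is a single prime end of $D_s$, so $\kappa(s):=f_s^{-1}(\text{that tip})\in\mathbb{T}$ is well defined, and the continuous extension of $\varphi_{s,t}=f_t^{-1}\circ f_s$ to $\overline{\mathbb{D}}$ sends $\kappa(s)$ to the tip $f_t^{-1}(\Gamma(s))$ of $K_{s,t}$. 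Hence $\kappa(s)$ lies in the closed arc $\overline{I_{s,t}}\subset\mathbb{T}$ that $\varphi_{s,t}$ maps onto $\overline{K_{s,t}}$, and $\operatorname{supp}\mu_{s,t}\subseteq\overline{I_{s,t}}$ since $\Re q_{s,t}$ has boundary value $0$ on $\mathbb{T}\setminus\overline{I_{s,t}}$. By conformal invariance of harmonic measure the length of $I_{s,t}$ equals $2\pi\,\omega\bigl(0,K_{s,t};\mathbb{D}\setminus K_{s,t}\bigr)$, and as $t\to s^{+}$ the arc $\Gamma([s,t))$ shrinks to $\Gamma(s)$, so $\overline{K_{s,t}}$ shrinks to $\kappa(s)$ (here the continuity of the first step is combined with the prime-end theory), whence $|I_{s,t}|\to 0$. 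Therefore $\mu_{s,t}\to\delta_{\kappa(s)}$ weakly-$\ast$, $q_{s,t}(z)\to\tfrac{\kappa(s)+z}{\kappa(s)-z}$ locally uniformly on $\mathbb{D}$, and from $\varphi_{s,t}(z)=z\exp\bigl(-(t-s)q_{s,t}(z)\bigr)$ one gets $\lim_{t\to s^{+}}\tfrac{\varphi_{s,t}(z)-z}{t-s}=-\,z\,\tfrac{\kappa(s)+z}{\kappa(s)-z}$ locally uniformly. A similar but easier argument gives continuity of $\kappa$ on $[0,T)$. Making these two prime-end statements rigorous — that the slit tip is a single prime end, and that the boundary extension of $f_t^{-1}$ at the tip depends continuously on~$t$ — is the main obstacle of the proof; the rest is bookkeeping.

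\emph{The two equations.} From $f_s(z)=f_t\bigl(\varphi_{s,t}(z)\bigr)$ we have $f_t(z)-f_s(z)=\int_{\varphi_{s,t}(z)}^{z}f_t'(\zeta)\,d\zeta$; letting $t\to s^{+}$ and using $f_t'\to f_s'$ locally uniformly together with the limit just obtained, the right $t$-derivative of $f_t(z)$ at $t=s$ exists and equals $z\,f_s'(z)\,\tfrac{\kappa(s)+z}{\kappa(s)-z}$. Since this expression is continuous in $s$ (by continuity of $\kappa$ and of $f_\cdot'$), $t\mapsto f_t(z)$ is of class $C^1$ and \eqref{EQ_PDE-Loewner-radial} holds. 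Conversely, given \eqref{EQ_PDE-Loewner-radial}, fix $s$ and $z$ and set $w(t):=\varphi_{s,t}(z)=f_t^{-1}\bigl(f_s(z)\bigr)$; as $(t,w)\mapsto f_t(w)$ is now jointly $C^1$ and $f_t'\ne 0$, the implicit function theorem shows $w$ is $C^1$, and differentiating $f_t(w(t))=f_s(z)$ and inserting \eqref{EQ_PDE-Loewner-radial} yields \eqref{EQ_ODE-Loewner-radial} with $w(s)=z$. Uniqueness for this Cauchy problem follows because along any solution $\tfrac{d}{dt}\log|w(t)|=-\Re\tfrac{\kappa(t)+w(t)}{\kappa(t)-w(t)}<0$, so the solution stays in the compact set $\{|w|\le|z|\}$, on which the right-hand side is Lipschitz in $w$, and Picard--Lindel\"of applies. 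Finally $\kappa$ is unique, since two continuous driving functions satisfying \eqref{EQ_PDE-Loewner-radial} must give the same value of $\partial_t f_t(z)=z\,f_t'(z)\,\tfrac{\kappa(t)+z}{\kappa(t)-z}$ for every $z$, forcing $\kappa_1(t)=\kappa_2(t)$.
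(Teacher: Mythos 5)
First, a point of comparison: the paper itself does not prove Theorem~\ref{TH_Loewner}; it is quoted from the literature (Goluzin, Aleksandrov, Duren, Pommerenke), and the paper's own detailed proof concerns only the chordal analogue, Theorem~\ref{TH_Kufarev_etal}, by a different elementary route --- hydrodynamic normalization, Schwarz reflection combined with the Area Theorem (Lemma~\ref{LM:varphi_extended}), and the Schwarz integral formula (Proposition~\ref{pr_SchwarzIntegralForumula}) in place of the Herglotz representation. Your outline is the classical radial argument of exactly the cited sources: kernel convergence, reparametrization by conformal radius, Herglotz measures supported on the preimage arcs $\overline{I_{s,t}}$, and the shrinking of these supports forcing $\mu_{s,t}\to\delta_{\kappa(s)}$. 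As a skeleton this is the standard correct route, and your passage from the one-sided derivative to \eqref{EQ_PDE-Loewner-radial}, then to \eqref{EQ_ODE-Loewner-radial} and uniqueness via Picard--Lindel\"of, is sound.

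The genuine gap is the part you yourself set aside as ``the main obstacle'': the boundary-behaviour statements are precisely the ``important details'' that the paper says are missing from the textbook treatments, and you do not supply them. Concretely: (a) your claim that $\overline{K_{s,t}}$ shrinks to $\kappa(s)$ as $t\downarrow s$ is both unproved and not the statement you need, since $K_{s,t}=f_t^{-1}\bigl(\Gamma([s,t))\bigr)$ is taken under the \emph{varying} maps $f_t^{-1}$, whose boundary behaviour at the tip you have not controlled; what the argument actually requires is that the closed arcs $\overline{I_{s,t}}\subset\UC$ decrease, as $t\downarrow s$, to the single point $\kappa(s)$, and this must be deduced from the fact that the tip $\Gamma(s)$ determines a \emph{single} prime end of $D_s$ --- note that for general $D$ (the case $D=\C$ included) $\partial D$ need not be locally connected, so $f_s$ has no continuous extension to $\UDc$ and one must genuinely work with prime ends, or else argue only through the maps $\varphi_{s,t}$, whose image is a slit disk with locally connected boundary, as the paper does in the chordal setting via Theorem~\ref{Th_PreOfSlitIsSlit} and Proposition~\ref{Pr_PropertiesSingleSlitMap}. (b) Continuity of $\kappa$ is not ``a similar but easier argument'': it is a two-sided statement (behaviour as $u\uparrow t$ as well as $u\downarrow s$), which in the chordal case is exactly the content of Lemma~\ref{LM:ArcsShrinks} and Corollary~\ref{CR_lambda-continuous}, proved there with the quantitative diameter estimates of Lemma~\ref{LM:varphi_extended}; some substitute for these estimates (or a boundary-uniform version of kernel convergence) has to be written out in the radial setting. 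Until (a) and (b) are established, the very existence of $\kappa(s)$, the convergence $\mu_{s,t}\to\delta_{\kappa(s)}$, and the continuity of the driving function --- the analytic heart of Theorem~\ref{TH_Loewner} --- remain unproved; the rest of your write-up is indeed bookkeeping, but the bookkeeping is not the theorem.
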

The functions $f_t$ in the above theorem, mapping a canonical domain (which one
usually chooses to be the unit disk $\UD$ or the upper half-plane~$\UHi$) onto
the complement of a Jordan arc, are colloquially referred to as \textit{(single-)slit
mappings}. The analogue of Theorem~\ref{TH_Loewner} for the chordal case,
\textsl{i.e.} for (single-\!)\,slit mappings of~$\UHi$ into itself normalized at~$\infty$,
is due to Kufarev \textsl{et al}~[\mcite{KufarevEtAl:FuncUnvHalfPl}].
\begin{theorem}[{[\mcite{KufarevEtAl:FuncUnvHalfPl}]}]\label{TH_Kufarev_etal}
Let $\gamma$ be a Jordan arc lying in~$\UHi$ except for one of its end-points,
which belongs to~$\Real$. Then there exists a unique continuous function
${\lambda\colon [0,T]\to\Real}$, ${0<T<+\infty}$, such that for a suitable parametrization
${\Gamma\colon [0,T]\to\UHi\cup\Real}$ of the arc $\gamma$
with $\Gamma(0)\in\UHi$ and $\Gamma(T)\in\Real$, the family
$(g_t)_{t\in[0,T]}$ consisting of the conformal mappings ${g_t\colon
\UHi\Maponto\UHi\setminus\Gamma([t,T])}$ normalized by the expansion
\begin{equation*}\label{EQ_hydrodyn1}
g_t(z)=z+c_1(t)z^{-1}+c_2(t)z^{-2}+\ldots
\end{equation*}
at $z=\infty$ satisfies the equation
\begin{equation}\label{EQ_PDE-Loewner-chordal}
\frac{\de g_t(z)}{\de t}=-\frac{\de g_t}{\de z}\frac{1}{\lambda(z)-z}.
\end{equation}
Moreover, for each $s\in[0,T)$ and each $z\in\UHi$ the function
$w_{z,s}(t):=g_t^{-1}\big(g_s(z)\big)$ is the unique solution to the following
Cauchy problem
\begin{equation}\label{EQ_ODE-Loewner-chordal}
\frac{dw(t)}{dt}=\frac1{\lambda(t)-w(t)},\quad t\in[s,T];~\quad w(s)=z.
\end{equation}
\end{theorem}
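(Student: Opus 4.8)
The plan is to exploit the interplay between the co‑transition maps $g_t$ and the \emph{transition maps} $\phi_{s,t}:=g_t^{-1}\circ g_s$, $0\le s\le t\le T$. First I would collect the elementary facts about hydrodynamically normalized univalent self‑maps of $\uH$ and the half‑plane capacity $\operatorname{hcap}$. Writing $K_t:=\Gamma([t,T])$ and $a(t):=\operatorname{hcap}(K_t)=-c_1(t)\ge0$, I would show, via Carath\'eodory kernel convergence (the domains $\uH\setminus K_t$ converge in the kernel sense to $\uH\setminus K_{t_0}$ as $t\to t_0$, since $\Gamma$ is a Jordan arc, so $g_t\to g_{t_0}$ locally uniformly), that $t\mapsto a(t)$ is continuous; strict monotonicity of $\operatorname{hcap}$ under proper inclusion of hulls gives that it is strictly decreasing, with $a(T)=0$ and $a(0)=\operatorname{hcap}(\gamma)\in(0,+\infty)$. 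Reparametrizing $\gamma$ by its remaining half‑plane capacity, I may assume $a(t)=T-t$ (the slope fixed so that the coefficients in \eqref{EQ_PDE-Loewner-chordal}--\eqref{EQ_ODE-Loewner-chordal} come out as stated); note that then $g_T=\id$.

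Next I would record that the $\phi_{s,t}$ form an \emph{evolution family}: $\phi_{t,t}=\id$, $\phi_{s,u}=\phi_{t,u}\circ\phi_{s,t}$ for $s\le t\le u$, each $\phi_{s,t}$ a hydrodynamically normalized univalent self‑map of $\uH$ with $\phi_{s,t}(z)=z+(c_1(s)-c_1(t))z^{-1}+\ldots$, and that geometrically $\phi_{s,t}$ maps $\uH$ onto $\uH\setminus\alpha_{s,t}$, where $\alpha_{s,t}:=g_t^{-1}\big(\Gamma([s,t))\big)$ is a Jordan arc issuing from the interior point $g_t^{-1}(\Gamma(s))$ and landing at the boundary point $\lambda(t):=g_t^{-1}(\Gamma(t))\in\R$ --- the preimage, under the boundary extension of $g_t^{-1}$, of the tip of $K_t$, which is a single point because that tip is a single prime end of $\uH\setminus K_t$. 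Then I would prove that $\lambda\colon[0,T]\to\R$ is continuous: combining the local‑uniform convergence $g_t^{-1}\to g_{t_0}^{-1}$ with the local connectedness of the boundaries $\partial(\uH\setminus K_t)$ and their Hausdorff convergence to $\partial(\uH\setminus K_{t_0})$ upgrades this to uniform convergence of the boundary extensions, whence continuity of $t\mapsto\Gamma(t)$ and of $g_{t_0}^{-1}$ at the tip yield $\lambda(t)\to\lambda(t_0)$.

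The heart of the argument is then the infinitesimal expansion
\[
\phi_{t,t+\delta}(z)=z-\frac{\delta}{\,z-\lambda(t)\,}+o(\delta)\qquad(\delta\to0^+),
\]
locally uniformly in $z\in\uH$, together with its counterpart for $\phi_{t-\delta,t}$. This rests on three ingredients: (i) a quantitative estimate from Geometric Function Theory --- for a hull $H$ of small diameter lying near $\xi\in\R$, the normalized map $\uH\setminus H\Maponto\uH$ equals $w\mapsto w+\operatorname{hcap}(H)/(w-\xi)+E(w)$ with $|E(w)|=o\big(\operatorname{hcap}(H)\big)$ uniformly on compacta disjoint from $H$, hence the same (with opposite sign) for its inverse; (ii) the identity $\operatorname{hcap}(\alpha_{t,t+\delta})=a(t)-a(t+\delta)=\delta$ coming from the chosen parametrization; and (iii) the \emph{localization}: $\diam(\alpha_{t,t+\delta})\to0$ and $\alpha_{t,t+\delta}$ collapses to $\lambda(t)$ as $\delta\to0$. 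Granting these, writing $w=\phi_{s,t}(z)$ one gets $\phi_{s,t+\delta}(z)-\phi_{s,t}(z)=\phi_{t,t+\delta}(w)-w=-\delta/(w-\lambda(t))+o(\delta)$; dividing by $\delta$, letting $\delta\to0^+$, and arguing symmetrically from the left (also using continuity of $t\mapsto\phi_{s,t}(z)$) shows that $w_{z,s}(t)=\phi_{s,t}(z)$ is differentiable with $w_{z,s}'(t)=1/\big(\lambda(t)-w_{z,s}(t)\big)$, and $w_{z,s}\in\cont^1$ since $\lambda$ is continuous. Uniqueness for \eqref{EQ_ODE-Loewner-chordal} is immediate: on a compact $Q\subset\uH$ the right‑hand side is Lipschitz in $w$ uniformly in $t$ (the denominator staying bounded away from $0$), so Picard--Lindel\"of applies, and $w_{z,s}(t)\in\uH$ for $t\in[s,T]$.

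For \eqref{EQ_PDE-Loewner-chordal} I would feed the same expansion into $g_t=g_{t+\delta}\circ\phi_{t,t+\delta}$: from $g_{t+\delta}(w)-g_t(w)=g_{t+\delta}(w)-g_{t+\delta}\big(\phi_{t,t+\delta}(w)\big)=g_{t+\delta}'(w)\,\big(w-\phi_{t,t+\delta}(w)\big)+O\big(|w-\phi_{t,t+\delta}(w)|^2\big)$ and $g_{t+\delta}'\to g_t'$ locally uniformly (kernel convergence) one obtains $\partial_t g_t(w)=-g_t'(w)/(\lambda(t)-w)$ for every $w\in\uH$; equivalently, differentiate $g_t\big(\phi_{s,t}(z)\big)\equiv g_s(z)$ in $t$ and use that, as $s\uparrow t$, $\phi_{s,t}(\uH)=\uH\setminus\alpha_{s,t}$ exhausts $\uH$. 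Uniqueness of $\lambda$ is then trivial --- if $g_t$ satisfied \eqref{EQ_PDE-Loewner-chordal} with two continuous functions, subtracting and using $g_t'\not\equiv0$ forces them to agree --- and the required parametrization $\Gamma$ is the capacity‑normalized one fixed at the outset. \emph{The main obstacle is ingredient} (iii): small half‑plane capacity by itself does not bound the diameter of a hull, so one must use the specific form $\alpha_{t,t+\delta}=g_{t+\delta}^{-1}\big(\Gamma([t,t+\delta))\big)$ --- the arc $\Gamma([t,t+\delta))$ shrinks to the tip of $K_{t+\delta}$ --- and establish an equicontinuity estimate for the boundary extensions of the maps $g_{t+\delta}^{-1}$ near that tip, uniform in $\delta$, again exploiting the local connectedness of the slit‑complement boundaries; this is the one point requiring genuine work beyond soft compactness arguments.
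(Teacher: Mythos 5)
Your outline follows essentially the same skeleton as the paper (parametrize $\gamma$ so that $-c_1(t)=T-t$, pass to the transition maps $\varphi_{s,t}=g_t^{-1}\circ g_s$, compute one-sided derivatives from an infinitesimal expansion localized at $\lambda(t)$, conclude by Picard--Lindel\"of, and recover the PDE from the ODE), but the two places you leave open are precisely where all the analytic content of the theorem sits, so as written there is a genuine gap. Your ingredient (i) (the expansion $w\mapsto w+\mathrm{hcap}(H)/(w-\xi)+o(\mathrm{hcap}(H))$, uniform on compacta) is invoked as a black box, and its standard proofs use exactly the Poisson/Schwarz representation that has to be developed anyway; and for ingredient (iii) you only propose a strategy --- an equicontinuity estimate, uniform in $\delta$, for the boundary extensions of the \emph{varying} maps $g_{t+\delta}^{-1}$ near the tip --- which is harder than what is needed and does not follow from ``local connectedness plus Hausdorff convergence of the boundaries'' alone (one would need uniform local connectedness with a quantitative modulus). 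Likewise, your continuity claims for $a(t)$ and $\lambda(t)$ via Carath\'eodory kernel convergence need extra care, since the normalization here is hydrodynamic at the boundary point $\infty$ rather than at an interior point, so the classical kernel theorem does not apply verbatim.

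The missing idea that makes all of this elementary (and is how the paper proceeds) is to localize on the \emph{other} side of $\varphi_{t,u}$ and transfer smallness by Gr\"onwall's Area Theorem. The set $\mathcal C_{t,u}=g_t^{-1}\big(\Gamma([t,u])\big)\subset\R$ shrinks to $\lambda(t)$ as $u\downarrow t$ for the trivial reason that $g_t^{-1}$ is a single fixed continuous map and $\Gamma([t,u])$ shrinks to $\Gamma(t)$, whose preimage is the one point $\lambda(t)$; no equicontinuity in $\delta$ is required. Smallness is then carried across the hydrodynamically normalized map $\varphi_{t,u}$ by Lemma~\ref{LM:varphi_extended}: each omitted compactum lies in a disk of radius twice the diameter of the other (parts (ii),(iii)), which yields Lemma~\ref{LM:ArcsShrinks} in both directions, while part (i), $|c_1(t)-c_1(u)|\le(\diam)^2$, gives continuity and strict monotonicity of $c_1$ (hence the standard parametrization) with no kernel convergence at all, and part (iv) gives the continuity $u\mapsto\varphi_{s,u}(z)$ you need for the left derivative. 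Finally, instead of your unproved expansion (i), the paper applies the Schwarz integral formula for $\uH$ (Proposition~\ref{pr_SchwarzIntegralForumula}) to $\varphi_{t,u}(\zeta)-\zeta$, obtaining
\begin{equation*}
\varphi_{t,u}(\zeta)-\zeta=\frac1\pi\int_{\mathcal C_{t,u}}\frac{\Im\{\varphi_{t,u}(\xi)\}}{\xi-\zeta}\,d\xi,
\qquad
u-t=\frac1\pi\int_{\mathcal C_{t,u}}\Im\{\varphi_{t,u}(\xi)\}\,d\xi,
\end{equation*}
so the difference quotient is a ratio of two integrals over the shrinking set $\mathcal C_{t,u}$ and the Integral Mean Value Theorem finishes the computation of both one-sided derivatives (Lemma~\ref{LM_varphi_st} and Section~\ref{SS_proof}). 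If you replace your ingredients (i) and (iii) by these two devices, your argument closes and coincides in substance with the paper's proof.
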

Equations~\eqref{EQ_PDE-Loewner-radial} and~\eqref{EQ_ODE-Loewner-radial} are
referred to as the \textit{(radial) Loewner PDE} and \textit{ODE},
respectively, while~\eqref{EQ_PDE-Loewner-chordal}
and~\eqref{EQ_ODE-Loewner-chordal} are known as the \textit{chordal Loewner
differential equations}\footnote{In modern literature the chordal Loewner ODE and PDE contain the extra factor 2 in the right-hand side. Moreover, the chordal Loewner ODE~\eqref{EQ_ODE-Loewner-chordal} is quite often considered with the opposite sign of the right-hand side. See Section~\ref{SS_remarks} at the end of the paper for more details.}.

The proof of Loewner's Theorem~\ref{TH_Loewner} can be found in many text
books, \textsl{e.g.},  in \cite[Chapter~I,
\S2]{Aleksandrov:ParConTheorUnivFunc}, \cite[\S17.3]{Conway:FunComVa2},
\cite[\S3.3]{Duren:UnivFunc}, \cite[Chapter~III, \S2]{Goluzin:GeThFuCo}, and
\cite[Chapter~IX, \S9]{Tsuji:PotenTheoModFuncTheo}. Unfortunately, in all of
these references some important details seem to be missing. A rigorous and
self-contained proof of Theorem~\ref{TH_Loewner} for the case~$D=\Complex$,
based on a number of subtle lemmas related to boundary behaviour of conformal
mappings, can be found in~\cite[Chapter~7]{Hayman:MultivalFunct}.

The chordal case was less well known until the 2000's. A rigorous proof of
Theorem~\ref{TH_Kufarev_etal} can be found in~\cite[\S4.1]{Lawler:ConforInvarProces}. This
proof is based on the deep relationship between the complex Brownian motion and
the harmonic measure, which allows one to use probabilistic methods to study
conformal mappings. Another proof (in a bit more general situation when the
curve $\gamma$ is allowed to ``touch'' itself), which heavily uses techniques
involving the notion of extremal length, can be found in~\cite{LawSchramWend:ValBrowIntExp}.

It is worth to mention that Theorem~\ref{TH_Kufarev_etal} can be deduced as
well from its radial analogue, Theorem~\ref{TH_Loewner}. One of the possible
ways to do so is described in~\cite[Chapter~IV, \S7]{Aleksandrov:ParConTheorUnivFunc}.
Nevertheless, in our opinion, taking into account the increasing interest to
Loewner Theory in general and to the Parametric Representation of slit
mappings, in particular, it is useful to have a detailed elementary direct
proof of this result based solely on Complex Analysis and basic topological
facts. In this survey paper we present such a proof, in a quite self-contained form,
following the idea indicated in the original paper~\cite{KufarevEtAl:FuncUnvHalfPl}.

\section{Preliminaries}
In this section we recall some basic results, which are used in the proof of
Theorem~\ref{TH_Kufarev_etal}. For a set $E\subset\ComplexE$, we will denote by $\overline E$ and $\partial E$ the closure and the boundary of~$E$ w.r.t.~$\ComplexE$, respectively. Moreover, we let
$\RealE:=\overline \Real=\Real\cup\{\infty\}$.

\subsection{Area Theorem} One of the most important elementary results in the theory
of univalent functions was discovered in 1914 by Thomas Hakon Gr\"onwall. Let
$\Delta:=\ComplexE\setminus\UDc$. Denote by $\classSigma$ the class of all
univalent meromorphic functions $g:\Delta\to\ComplexE$ having the Laurent
expansion at~$\infty$ of the form
$$
g(\zeta)=\zeta+b_0+\sum_{n=1}^{+\infty}b_n \zeta^{-n},\quad \zeta\in\Complex\setminus\UDc.
$$
Let $E_g$ stand for the omitted set of~$g$, \textsl{i.e.}
$E_g:=\Complex\setminus g(\Delta)$. Further, given a set $E\subset\C$ we denote by $\diam E$ and $\mathop{\mathsf{area}} E$ its Euclidian diameter and area, respectively, and by $\dist(\cdot,\cdot)$ we denote the Euclidean distance in~$\Complex$.

\begin{pretheorem}[Gr\"onwall's Area Theorem, see, \textsl{e.g.}, \protect{\cite[p.\,29]{Duren:UnivFunc}} or
\protect{\cite[p.\,18]{Pommerenke:UnivalFunctions}}]\label{prop:AreaThm}\mbox{~}\\
Let $g\in\classSigma$. Then
\begin{equation*}
\sum_{n=1}^{+\infty} n|b_n|^2 \le1
\end{equation*}
and the equality holds if and only if~~$~\mathop{\mathsf{area}}E_g=0$.
\end{pretheorem}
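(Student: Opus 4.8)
The plan is to run the classical Gr\"onwall area computation. Fix $r>1$. The circle $\{|\zeta|=r\}$ lies in $\Delta$, where $g$ is holomorphic and injective, so $C_r:=g\big(\{|\zeta|=r\}\big)$ is an analytic Jordan curve; let $D_r$ be the bounded component of $\ComplexE\setminus C_r$. Since $g$ is conformal, $\Omega_r:=g\big(\{|\zeta|>r\}\big)$ is a domain containing $\infty$, hence it is precisely the \emph{unbounded} component of $\ComplexE\setminus C_r$, and therefore $\overline{D_r}=\ComplexE\setminus\Omega_r=\Complex\setminus\Omega_r$ is compact. Being orientation preserving, $g$ carries the positively oriented circle onto the positively oriented parametrization $\theta\mapsto w(\theta):=g(re^{i\theta})$, $\theta\in[0,2\pi]$, of $\partial D_r=C_r$.

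First I would compute the area of $D_r$ by Green's theorem in the form
\[
\mathop{\mathsf{area}} D_r=\frac{1}{2i}\int_0^{2\pi}\overline{w(\theta)}\,w'(\theta)\,d\theta .
\]
The Laurent series $g(\zeta)=\zeta+b_0+\sum_{n\ge1}b_n\zeta^{-n}$ converges uniformly on $\{|\zeta|=r\}$, and so does its termwise $\theta$-derivative; substituting both into the integral and using $\int_0^{2\pi}e^{i(k-j)\theta}\,d\theta=2\pi\delta_{jk}$, only the diagonal terms survive and one gets the exact identity
\[
\mathop{\mathsf{area}} D_r=\pi\Big(r^2-\sum_{n=1}^{\infty}n\,|b_n|^2\,r^{-2n}\Big).
\]
Since the left-hand side is non-negative, $\sum_{n\ge1}n|b_n|^2r^{-2n}\le r^2$ for all $r>1$; letting $r\downarrow1$ and applying monotone convergence on the left gives $\sum_{n\ge1}n|b_n|^2\le1$.

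For the equality statement I would let $r\downarrow1$ in the last displayed identity. The analytic curve $C_r$ is rectifiable, hence of zero planar measure, so $\mathop{\mathsf{area}} D_r=\mathop{\mathsf{area}}\overline{D_r}=\mathop{\mathsf{area}}(\Complex\setminus\Omega_r)$. As $r$ decreases to $1$ the sets $\Complex\setminus\Omega_r$ decrease, and
\[
\bigcap_{r>1}(\Complex\setminus\Omega_r)=\Complex\setminus\bigcup_{r>1}\Omega_r=\Complex\setminus g\big(\{|\zeta|>1\}\big)=\Complex\setminus g(\Delta)=E_g ;
\]
since $\Complex\setminus\Omega_2=\overline{D_2}$ is compact and hence of finite area, continuity of measure from above yields $\mathop{\mathsf{area}} D_r\to\mathop{\mathsf{area}} E_g$ as $r\downarrow1$. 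On the other hand the right-hand side of the identity tends to $\pi\big(1-\sum_{n\ge1}n|b_n|^2\big)$ by monotone convergence. Hence $\mathop{\mathsf{area}} E_g=\pi\big(1-\sum_{n\ge1}n|b_n|^2\big)$, and equality $\sum_{n\ge1}n|b_n|^2=1$ holds exactly when $\mathop{\mathsf{area}} E_g=0$.

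The Fourier bookkeeping and the term-by-term integration are routine, being justified by the uniform convergence of the Laurent series on each circle $\{|\zeta|=r\}$ with $r>1$. The steps that need real care are the plane-topology facts in the first paragraph --- that $C_r$ is a Jordan curve, that $\Omega_r$ is exactly its unbounded complementary component, and that the induced orientation is the positive one --- and, in the equality case, the passage to the limit $r\downarrow1$, which rests on the monotone exhaustion $\Omega_r\uparrow g(\Delta)$ together with the fact that the curves $C_r$ carry no area. I expect the main obstacle to be nothing deep but rather the careful organization of these topological ingredients, all of which follow from the conformality of $g$ and the Jordan curve theorem.
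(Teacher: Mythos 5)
Your argument is correct: it is the classical Gr\"onwall computation --- the exact identity $\mathop{\mathsf{area}} D_r=\pi\bigl(r^2-\sum_{n\ge1}n|b_n|^2r^{-2n}\bigr)$ via Green's theorem, followed by the limit $r\downarrow1$ --- which is precisely the proof given in the sources the paper cites (Duren, p.~29; Pommerenke, p.~18); the paper itself states Theorem~\ref{prop:AreaThm} without proof. The topological points you flag (that $\Omega_r$ is the full unbounded complementary component of the analytic Jordan curve $C_r$, the induced positive orientation, and the monotone exhaustion $\Omega_r\uparrow g(\Delta)$ with $\mathop{\mathsf{area}} C_r=0$) are exactly the ones needing care, and your treatment of them is sound.
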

As a corollary, one obtains the following two statements.
\begin{preproposition}[\protect{\cite[p.\,19]{Pommerenke:UnivalFunctions}}]\label{prop:EstimClassSig}
Let $g\in\Sigma$. Then $E_g \subset \big\{w\in\C\colon |w-b_0|\le2\big\}$ and
the equality holds if and only if $E_g$ is a line segment of length $4$.
\end{preproposition}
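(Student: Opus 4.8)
The plan is to deduce the statement from the Area Theorem (Theorem~\ref{prop:AreaThm}) applied to a single auxiliary function attached to $g$ and one omitted value. Fix $g\in\classSigma$, write $g(\zeta)=\zeta+b_0+\sum_{n\ge1}b_n\zeta^{-n}$, and let $c\in E_g$; by the very definition of $E_g$ this just means $g(\zeta)\ne c$ for every $\zeta\in\Delta$. Since $\Delta$ is simply connected and $\zeta\in\Delta$ forces $\zeta^2\in\Delta$, the zero-free holomorphic function $\zeta\mapsto g(\zeta^2)-c$ has a single-valued holomorphic square root on $\Delta$; I would take the branch $G$ with $G(\zeta)/\zeta\to1$ as $\zeta\to\infty$. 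Expanding $g(\zeta^2)-c=\zeta^2+(b_0-c)+b_1\zeta^{-2}+\dots$ gives $G(\zeta)=\zeta+\tfrac12(b_0-c)\zeta^{-1}+\dots$, so in the Laurent expansion of $G$ the constant term is $0$ and the coefficient of $\zeta^{-1}$ is $(b_0-c)/2$.

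The next step is to verify that $G\in\classSigma$, i.e.\ that $G$ is univalent on $\Delta$. The key point is that $G$ is \emph{odd}: $G(\zeta)^2=g(\zeta^2)-c$ depends on $\zeta$ only through $\zeta^2$, so $G(-\zeta)^2=G(\zeta)^2$; since $G$ is zero-free, the continuous $\{\pm1\}$-valued function $G(-\zeta)/G(\zeta)$ on the connected set $\Delta$ is constant, and it equals $-1$ near $\infty$, so $G(-\zeta)=-G(\zeta)$ throughout. Then $G(\zeta_1)=G(\zeta_2)$ gives $g(\zeta_1^2)=g(\zeta_2^2)$, hence $\zeta_1^2=\zeta_2^2$ by univalence of $g$, hence $\zeta_1=\pm\zeta_2$, and the minus sign is excluded because it would force $G(\zeta_1)=-G(\zeta_1)=0$. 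Applying the Area Theorem to $G$ now yields $|(b_0-c)/2|^2\le\sum_{n\ge1}n|b_n^{(G)}|^2\le1$, that is $|c-b_0|\le2$; since $c\in E_g$ was arbitrary, $E_g\subset\{w:|w-b_0|\le2\}$.

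For the equality statement — which I read as the assertion that $E_g$ touches the circle $\{|w-b_0|=2\}$ — suppose first that some $c\in E_g$ has $|c-b_0|=2$. Then the $\zeta^{-1}$-coefficient of $G$ has modulus $1$, so the inequality $\sum_{n\ge1}n|b_n^{(G)}|^2\le1$ forces $b_n^{(G)}=0$ for all $n\ge2$ (and, by the equality case of the Area Theorem, $\mathop{\mathsf{area}}E_G=0$), whence $G(\zeta)=\zeta+e^{i\alpha}\zeta^{-1}$ for some $\alpha\in\R$. Squaring and substituting $\eta=\zeta^2$, which ranges over all of $\Delta$, recovers $g(\eta)=\eta+b_0+e^{2i\alpha}\eta^{-1}$; evaluating on $|\eta|=1$ gives $g(e^{it})=b_0+2e^{i\alpha}\cos(t-\alpha)$, so the image of $\partial\UD$ under $g$, which is precisely $E_g$, is the line segment $\{b_0+2se^{i\alpha}:-1\le s\le1\}$ of length $4$. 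Conversely, if $E_g$ is a line segment of length $4$, say with midpoint $m$ and unit direction $e^{i\theta}$, then the map $\zeta\mapsto\zeta+m+e^{2i\theta}\zeta^{-1}$ is — by the same computation — a conformal map of $\Delta$ onto $\C\setminus E_g$ belonging to $\classSigma$; since any two conformal maps $\Delta\to\C\setminus E_g$ fixing $\infty$ differ by precomposition with a rotation $\zeta\mapsto e^{i\phi}\zeta$ of $\Delta$ and matching the leading Laurent coefficients rules out $\phi\not\equiv0$, this map coincides with $g$, so $b_0=m$ and the endpoints $b_0\pm2e^{i\theta}$ of $E_g$ lie on $\{|w-b_0|=2\}$.

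I expect the only step requiring an idea to be the choice of the auxiliary function $G(\zeta)=\sqrt{g(\zeta^2)-c}$ — a square-root transform tailored to a single omitted value — together with the (routine but slightly delicate) verification of its univalence through the oddness argument. In the equality statement, the converse implication is not a coefficient estimate at all but relies on the uniqueness, up to a rotation of $\Delta$, of the normalized conformal map of $\Delta$ onto the slit plane $\C\setminus E_g$, which I would quote from the elementary theory of conformal mapping.
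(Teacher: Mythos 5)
The paper states this proposition without proof, citing Pommerenke, and your argument is precisely the standard proof found there: the square-root transform $G(\zeta)=\sqrt{g(\zeta^2)-c}$ attached to an omitted value $c$, the oddness/univalence check, the Area Theorem applied to $G$, and the identification of the extremal maps $\zeta\mapsto\zeta+b_0+e^{2i\alpha}\zeta^{-1}$ — all correct, including your (correct) reading of ``equality'' as $E_g$ meeting the circle $|w-b_0|=2$. One cosmetic point: $g(\zeta^2)-c$ has a double pole at $\infty$, so rather than invoking a square root of that function directly on $\Delta$, factor $g(\zeta^2)-c=\zeta^2 h(\zeta)$ with $h$ holomorphic, zero-free and $h(\infty)=1$, and set $G(\zeta)=\zeta\sqrt{h(\zeta)}$; with this adjustment everything you wrote goes through verbatim.
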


\begin{lemma}\label{LM:varphi_extended}
Let $\varphi\colon\ComplexE\setminus K_1\Maponto \ComplexE\setminus K_2$, where
$K_1,\,K_2\subset\Complex$ are two compact sets, be a conformal mapping with
the Laurent expansion at~$\infty$ of the form
\begin{equation}\label{EQ:expansion_in_lemma}
\varphi(z)=z+\sum_{n=1}^{+\infty} c_nz^{-n}.
\end{equation}
Then the following statements hold:
\begin{itemize}
\item[(i)] $|c_1|\le
\min_{j=1,2}\big(\diam K_j\big)^2$;
\item[(ii)] $K_1\subset\big\{\hphantom{w}\mathllap{z}\colon |z-w_0|\le 2 \diam K_2\big\}$ for any $w_0\in K_2$;
\item[(iii)] $K_2\subset\big\{w\colon |w-z_0|\le 2 \diam K_1\big\}$ for any $z_0\in K_1$;
\item[(iv)] Let $z_1\in\Complex\setminus K_1$ and $z_2:=\varphi(z_1)$. If $\dist(z_j,K_j)>\diam K_j$ for $j=1$ \textbf{or} $j=2$, then $|z_1-z_2|<3\diam K_j$ for the same value of~$j$.
\end{itemize}
\end{lemma}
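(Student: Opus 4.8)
All four assertions rest on a single normalization. Given any point $a$ lying in, say, $K_1$, and writing $d_1\dfn\diam K_1$, we have $K_1\subset\{z:|z-a|\le d_1\}$, so
\[
g(\zeta)\dfn \tfrac1{d_1}\bigl(\varphi(a+d_1\zeta)-a\bigr),\qquad \zeta\in\Delta\dfn\ComplexE\setminus\UDc,
\]
is well defined and univalent on $\Delta$, with Laurent expansion $g(\zeta)=\zeta+b_1\zeta^{-1}+b_2\zeta^{-2}+\dots$ at infinity, where $b_1=c_1/d_1^{\,2}$ and the constant term is $0$; hence $g\in\classSigma$ and Gr\"onwall's Area Theorem (Theorem~\ref{prop:AreaThm}) gives $\sum_{n\ge1}n|b_n|^2\le1$. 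Throughout we use that $\varphi^{-1}\colon\ComplexE\setminus K_2\to\ComplexE\setminus K_1$ has a Laurent expansion of the same normalized form $w+\widetilde c_1w^{-1}+\dots$ with $\widetilde c_1=-c_1$, so every statement may be applied with $(\varphi,K_1,z_1)$ and $(\varphi^{-1},K_2,z_2)$ interchanged. Parts (i)--(iii) then come out immediately: for (i), $|b_1|\le1$ gives $|c_1|\le d_1^{\,2}$, and the same for $\varphi^{-1}$ gives $|c_1|=|\widetilde c_1|\le(\diam K_2)^2$; for (iii), taking $a=z_0\in K_1$ and noting that $\varphi$ maps $\ComplexE\setminus\{z:|z-z_0|\le d_1\}$ into $\ComplexE\setminus K_2$, the omitted set of $g$ contains $\tfrac1{d_1}(K_2-z_0)$, which by Proposition~\ref{prop:EstimClassSig} (applicable since $g$ has vanishing constant term) lies in $\{|w|\le2\}$, i.e.\ $K_2\subset\{w:|w-z_0|\le2d_1\}$; and (ii) is (iii) applied to $\varphi^{-1}$.

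The substance is (iv), and by the symmetry above it suffices to prove: if $\dist(z_1,K_1)>d_1$ then $|z_1-z_2|<3d_1$ (we tacitly assume $d_1>0$, as holds whenever $K_1$ is a continuum, which is the relevant situation). The obvious attempt --- enclose $K_1$ in the disc of radius $d_1$ centred at the point of $K_1$ nearest $z_1$, normalize $\varphi$ by it, and bound $g(\zeta_1)-\zeta_1$ at the image $\zeta_1$ of $z_1$ via the Area Theorem --- works as long as $z_1$ is not extremely close to $K_1$, but breaks down when $\dist(z_1,K_1)$ exceeds $d_1$ only by a tiny amount, since then $|\zeta_1|$ exceeds $1$ only by a tiny amount. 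The fix is to normalize instead by the \emph{circumscribed disc} $\overline{\mathbb D}(c,r)$ of $K_1$ (the smallest closed disc containing $K_1$) and to exploit two standard facts about it: the centre $c$ belongs to the convex hull of the contact set $P\dfn K_1\cap\partial\overline{\mathbb D}(c,r)$, and $r\le d_1/\sqrt3$ (Jung's theorem).

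The decisive step is then purely planimetric: $\dist(z_1,K_1)>d_1$ \emph{forces} $|z_1-c|\ge\sqrt2\,r$, so $z_1$ lies a definite factor outside the circumscribed disc. Indeed $P\subset S\dfn\partial\overline{\mathbb D}(c,r)\setminus\overline{\mathbb D}(z_1,d_1)$, which is a single circular arc; an arc of angular length $<\pi$ lies in an open half-plane through $c$, so its convex hull omits $c$, whereas $c$ lies in the convex hull of $P$, hence of $S$ --- so $S$ has length $\ge\pi$. Writing $z_1=c+\delta e^{i\alpha}$, one checks in one line that the arc $\{\theta:|re^{i\theta}-\delta e^{i\alpha}|>d_1\}$ has length $\ge\pi$ exactly when $\delta^2+r^2\ge d_1^{\,2}$, and together with $d_1^{\,2}\ge3r^2$ this yields $\delta^2\ge2r^2$. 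Now normalize $\varphi$ by $\overline{\mathbb D}(c,r)$, putting $h(\zeta)\dfn r^{-1}\bigl(\varphi(c+r\zeta)-c\bigr)$; as before $h\in\classSigma$ with $\sum_{n\ge1}n|b_n|^2\le1$, and since $\zeta_1\dfn r^{-1}(z_1-c)$ satisfies $|\zeta_1|=\delta/r\ge\sqrt2$, the value $h(\zeta_1)=r^{-1}(z_2-c)$ is legitimate and Cauchy--Schwarz gives
\[
\frac{|z_1-z_2|}{r}=\bigl|h(\zeta_1)-\zeta_1\bigr|
=\Bigl|\sum_{n\ge1}b_n\zeta_1^{-n}\Bigr|
\le\Bigl(\sum_{n\ge1}n|b_n|^2\Bigr)^{\!1/2}\Bigl(\sum_{n\ge1}\frac{|\zeta_1|^{-2n}}{n}\Bigr)^{\!1/2}
\le\sqrt{-\ln\bigl(1-|\zeta_1|^{-2}\bigr)}\le\sqrt{\ln2}.
\]
Hence $|z_1-z_2|\le r\sqrt{\ln2}\le(d_1/\sqrt3)\sqrt{\ln2}<d_1<3d_1$, with ample room to spare. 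I expect the geometry of this last paragraph to be the main obstacle: recognizing that one must pass to the circumscribed disc, and that Jung's theorem is precisely what upgrades ``$z_1$ lies outside the disc'' to ``$z_1$ lies a definite factor outside the disc'', which is exactly what the Area Theorem estimate needs; the function-theoretic steps are routine once the correct disc is in hand.
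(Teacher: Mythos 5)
Your proof is correct. Parts (i)--(iii) are essentially the paper's own argument (rescale by a disc of radius $\diam K_1$ centred at a point of $K_1$, then invoke the Area Theorem and Proposition~\ref{prop:EstimClassSig}, and pass to $\varphi^{-1}$ for the symmetric statements). Part (iv), however, is where you genuinely diverge, and the divergence stems from a misdiagnosis of the ``obvious attempt'': the paper does \emph{not} estimate $g(\zeta_1)-b_0-\zeta_1$ through its Laurent coefficients (which indeed degenerates as $|\zeta_1|\downarrow1$), but applies the Maximum Modulus Principle to the function $f(\zeta):=g(\zeta)-b_0-\zeta$, holomorphic on $\Delta$ with $f(\infty)=0$, and bounds $\limsup_{\zeta\to\UC}|f(\zeta)|\le\limsup\big(|g(\zeta)-b_0|+|\zeta|\big)\le 2+1=3$ using Proposition~\ref{prop:EstimClassSig}; this gives $|z_1-z_2|\le 3\diam K_1$ uniformly for \emph{every} $\zeta_1\in\Delta$, so the hypothesis $\dist(z_1,K_1)>\diam K_1$ is needed only to guarantee $\zeta_1\in\Delta$, and no ``definite factor outside the disc'' is required. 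Your route --- normalizing by the minimal enclosing disc, using that its centre lies in the convex hull of the contact set, Jung's theorem $r\le\diam K_1/\sqrt3$, the half-plane argument forcing the exterior arc to have angular length at least $\pi$, hence $|z_1-c|\ge\sqrt2\,r$, and then Cauchy--Schwarz on the coefficients --- is correct as far as I can check (the planimetric step and the identity for the angular length are right), and it buys a visibly sharper constant, $|z_1-z_2|\le r\sqrt{\ln2}\le\diam K_1\sqrt{(\ln 2)/3}<\diam K_1$, at the price of importing Jung's theorem and minimal-disc facts that lie outside the paper's toolkit (Area Theorem plus its corollary) and of a noticeably longer argument; for the application in Lemmas~\ref{LM:ArcsShrinks} and~\ref{LM:continuity-in-s} the cruder constant $3$ is all that is used. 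One shared caveat: both your argument and the paper's tacitly assume $\diam K_j>0$ (in the degenerate one-point case $\varphi$ is the identity and the strict inequality in (iv) is vacuous anyway), so this costs nothing in the intended application.
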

\begin{proof}
Denote $R_j:=\diam K_j$ for $j=1,2$. Fix any $z_0\in K_1$. Then $K_1\subset
\{z:|z-z_0|\le R_1\}$. Therefore the function
$g(\zeta):=\varphi\big(R_1\zeta+z_0\big)/R_1$, $\zeta\in\Delta$, belongs to the
class~$\Sigma$, with the free term in its Laurent expansion at~$\infty$ equal to~$b_0=z_0/R_1$. Since by construction $K_2\subset\{w\colon w/R_1\in E_g\}$,
it follows from Proposition~\ref{prop:EstimClassSig} that $K_2\subset\{w\colon |w-z_0|\le 2R_1\}$. This proves~(iii).

In order to estimate~$c_1$ consider again the function $g$ and apply the Area
Theorem (Theorem~\ref{prop:AreaThm}), from which it follows then that $|b_1|=|c_1|/R_1^2\le1$.

Further, if $\dist(z_1,K_1)>\diam K_1$, then $\zeta_1:=(z_1-z_0)/R_1\in\Delta$ and therefore $|z_2-z_1|=\big|R_1g(\zeta_1)-(R_1\zeta_1+z_0)\big|=R_1|g(\zeta_1)-b_0-\zeta_1|=R_1|f(\zeta_1)|$, where the function $f$ defined by $f(\zeta):=g(\zeta)-b_0-\zeta$ for all $\zeta\in\Delta\setminus\{\infty\}$ and $f(\infty)=0$ is holomorphic in~$\Delta$. Applying the Maximum Modulus Principle to~$f$ and Proposition~\ref{prop:EstimClassSig} to~$g$, we conclude that
$$
|z_2-z_1|\le R_1\limsup_{\Delta\ni \zeta\to\UC}|f(\zeta)|\le R_1\limsup_{\Delta\ni \zeta\to\UC}\big(|g(\zeta)-b_0|+|\zeta|\big)\le 3R_1.
$$
This proves (iv) for~$j=1.$\vskip1ex

\noindent To complete the proof of the lemma it remains to apply the above arguments for~$\varphi^{-1}$.
\end{proof}

\subsection{Schwarz formula for the upper half-plane}
We will need the following version of the Schwarz Integral Formula.
\begin{proposition}\label{pr_SchwarzIntegralForumula}
Let $f:\uHc\to\ComplexE$ be continuous in~$\uHc=\uH\cup\RealE$
and holomorphic in~$\uH$. Suppose that $f(\infty)=0$ and
\begin{equation}\label{pr_hp_IntAbsConv}
\int_\R\,\abs[\Bigg]{\frac{\Im\big\lbrace f(\xi)\big\rbrace}{\xi-\iu}}\,d\xi
\;<\; +\infty\,.
\end{equation}
Then
\begin{equation}\label{pr_th_SchwIntFor}
f(z) \,=\,
\frac{1}{\pi}\,\int_\R\,\frac{\Im\big\lbrace f(\xi)\big\rbrace}{\xi-z}\,d\xi
\end{equation}
for all $z\in\uH$.
\end{proposition}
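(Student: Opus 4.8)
The plan is to compare $f$ with the explicit candidate
\[
\Phi(z)\dfn\frac1\pi\int_\R\frac{\Im f(\xi)}{\xi-z}\,d\xi ,\qquad z\in\uH .
\]
First I would check that this integral converges absolutely and defines a holomorphic function on $\uH$. For $z$ ranging over a compact set $K\subset\uH$ one has $|\xi-z|\ge c_K\,|\xi-\iu|$ for a suitable $c_K>0$ (both sides are comparable to $|\xi|$ as $|\xi|\to\infty$, and $|\xi-z|$ is bounded away from $0$ for bounded $\xi$), so the integrand is dominated, locally uniformly in~$z$, by $c_K^{-1}\,\bigl|\Im f(\xi)/(\xi-\iu)\bigr|\in L^1(\R)$ thanks to~\eqref{pr_hp_IntAbsConv}; holomorphy of $\Phi$ then follows from Morera's theorem combined with Fubini. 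Writing $1/(\xi-z)=(\xi-\Re z)/|\xi-z|^2+\iu\,\Im z/|\xi-z|^2$ and taking imaginary parts gives the Poisson-type formula
\[
\Im\Phi(z)=\frac1\pi\int_\R\frac{\Im z}{|\xi-z|^2}\,\Im f(\xi)\,d\xi ,\qquad z\in\uH .
\]

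The core step is to show that $\Im f$ obeys the very same formula. Since $f$ is finite-valued and continuous on the compact set $\uHc$, it is bounded there, say $|f|\le M$. Fix $z\in\uH$ and $R>|z|$, and apply Cauchy's integral formula to $f$ on the upper half-disc $D_R\dfn\{|\zeta|<R\}\cap\uH$ --- which is legitimate after the routine approximation of $D_R$ from inside by $\{\Im\zeta>\varepsilon\}\cap D_R$ followed by $\varepsilon\to0^+$, using only the continuity of $f$ up to $\R$ --- simultaneously at the point $z\in D_R$ and at its reflection $\bar z\notin\overline{D_R}$; subtracting the two identities yields
\[
f(z)=\frac1{2\pi\iu}\int_{\partial D_R}f(\zeta)\Bigl(\frac1{\zeta-z}-\frac1{\zeta-\bar z}\Bigr)\,d\zeta
=\frac{\Im z}{\pi}\int_{\partial D_R}\frac{f(\zeta)}{(\zeta-z)(\zeta-\bar z)}\,d\zeta .
\]
On the half-circle $|\zeta|=R$ the integrand is $O(R^{-2})$ while the arc length is $\pi R$, so that contribution tends to $0$ as $R\to\infty$; on the segment $[-R,R]$ one has $(\xi-z)(\xi-\bar z)=|\xi-z|^2$ and the integrand is $O(\xi^{-2})$ at infinity (here $|f|\le M$ is used), hence absolutely integrable over $\R$. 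Letting $R\to\infty$ gives $f(z)=\frac{\Im z}{\pi}\int_\R f(\xi)/|\xi-z|^2\,d\xi$, and since the Poisson kernel is real, taking imaginary parts produces exactly $\Im f(z)=\Im\Phi(z)$.

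Thus $f-\Phi$ is holomorphic on the connected domain $\uH$ and has identically vanishing imaginary part, hence equals a real constant $c$. To pin down $c=0$ I would let $z=\iu y$ with $y\to+\infty$: on the one hand $f(\iu y)\to f(\infty)=0$ by continuity of $f$ at $\infty$, and on the other hand $\Phi(\iu y)\to0$ by dominated convergence, because for $y\ge1$ the integrand of $\Phi(\iu y)$ is bounded by $\bigl|\Im f(\xi)/(\xi-\iu)\bigr|\in L^1(\R)$ and converges pointwise to~$0$. Hence $f\equiv\Phi$ on $\uH$, which is~\eqref{pr_th_SchwIntFor}. The points requiring care are precisely the interchanges of limits and integrals: the absolute convergence of all the integrals involved --- where~\eqref{pr_hp_IntAbsConv} is essential, being the only information available on the \emph{imaginary} part of $f$ on $\R$, the real part being controlled merely through the crude bound $|f|\le M$ --- and the vanishing of the half-circle integral, which is the reason one must subtract the value of the Cauchy integral at~$\bar z$, upgrading the decay of the Cauchy kernel from $|\zeta|^{-1}$ to $|\zeta|^{-2}$.
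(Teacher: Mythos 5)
Your proof is correct, but it takes a genuinely different route from the paper. The paper pulls the problem back to the unit disk via the Cayley map $\Cayley$ and invokes the classical Schwarz integral formula there for $\psi=-\iu\, f\circ\Cayley$; after changing variables back to the half-plane the formula splits as $I_1(w)-I_2-C$, and the hypotheses \eqref{pr_hp_IntAbsConv} and $f(\infty)=0$ enter exactly once, to guarantee absolute convergence of $I_1$, $I_2$ and to conclude $-I_2-C=0$ from $I_1(\iu y)\to0$. You instead stay in $\uH$ throughout: you first establish the Poisson representation $f(z)=\frac{\Im z}{\pi}\int_\R f(\xi)\,|\xi-z|^{-2}d\xi$ directly from Cauchy's formula on expanding half-discs (the subtraction of the value at $\bar z$ supplying the $O(R^{-2})$ decay that kills the half-circle contribution), then compare imaginary parts with the explicit candidate $\Phi$, and finally pin down the residual real constant along $z=\iu y\to\infty$, where \eqref{pr_hp_IntAbsConv} and $f(\infty)=0$ play the same role as in the paper's last step. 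Your route is more self-contained --- no transfer to the disk and no appeal to the disk Schwarz formula --- at the cost of the contour estimates, for which you need $|f|\le M$ on $\uHc$; assuming $f$ finite-valued (hence bounded, by compactness of $\uHc$) is legitimate, since this is how the hypothesis must be read anyway: the paper's use of the classical Schwarz formula requires the same finiteness of the boundary values of $\Im f$, and for genuinely $\ComplexE$-valued $f$, such as $f(z)=-1/z$, the stated conclusion would fail. The limit interchanges you single out (local domination of the kernel by $|\Im f(\xi)/(\xi-\iu)|$, the vanishing of the arc integral, dominated convergence along the imaginary axis) are indeed the points of care, and you handle them correctly.
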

\begin{proof} Applying the Schwarz Integral Formula to the function
$\psi(z):=-\iu f\big(\Cayley(z)\big)$, where $\Cayley(z):=\iu(1+z)/(1-z)$ is
the Cayley map of~$\UD$ onto the upper half plane~$\uH$, we get
\begin{align}
-\iu f\big(\Cayley(z)\big)\; &=\; \frac{1}{2\pi}\int_{\UC}\frac{\omega +
z}{\omega - z} \,\,\Re\big\{-\iu f\big(\Cayley(\omega)\big)\big\}
\,\abs{d\omega} + \iu\,\Im\big\{-\iu f\big(\Cayley(0)\big)
\big\}\notag\\
\label{pr_prf_Form1}&=\; \frac{1}{2\pi} \int_{\UC\setminus\{1\}} \frac{\omega +
z}{\omega - z}\,\,\Im\big\{ f\big(\Cayley(\omega)\big)\big\}\,\abs{d\omega} +
\iu{C},
\end{align}
where $C:=-\Re\{ f(\iu)\}$. Substituting $z:=\Cayley^{-1}(w)$, $w\in\uH$,
in~\eqref{pr_prf_Form1}, and changing the integration variable
$\omega:=\Cayley^{-1}(\xi),\;\xi\in\R$, we get
\begin{align*}
 f(w) \;&=\; \frac{1}{\pi} \int_{\R}
\frac{1+\xi{w}}{\xi-w}\,\,\Im\big\lbrace f(\xi)\big\rbrace\,\frac{d\xi}{\xi^2+1}
\;-\; C = I_{1}(w) \;-\; I_{2} \;-\; C
\end{align*}
for all $w\in\uH$, where the integrals
\begin{equation*}
I_{1}(w):=\frac{1}{2\pi}\int_{\R}\frac{\Im\big\{ f(\xi)\big\}}{\xi-w}\,d\xi
\quad\text{and}\quad
I_{2}\dfn\frac{1}{2\pi}\int_{\R}\frac{\xi}{\xi^2+1}\,\,\Im\big\{ f(\xi)\big\}\,d\xi
\end{equation*}
converge absolutely thanks to~\eqref{pr_hp_IntAbsConv}. Moreover,
condition~\eqref{pr_hp_IntAbsConv} implies also  that
$I_{1}(\iu{y})\to0$ as $y\to+\infty$. Thus $-I_{2}-C=f(\infty)=0$ and we get formula~\eqref{pr_th_SchwIntFor}.
\end{proof}

\subsection{Boundary behaviour of slit mappings}
We start with  two basic definitions.
\begin{definition}
Let $D\subsetneq\RS$ be a domain. A subset $\gamma$ of $D$ is called a
\emph{slit in $D$} if there exists a homeomorphism
$\Gamma:[0,T]\Mapinto\overline{D}$, where $T>0$, such that
$\Gamma\big([0,T)\big)=\gamma$ and $\Gamma(T)\in\partial{D}$. The function
$\Gamma$ is said to be a \emph{parameterization} of the slit $\gamma$. The point~$\Gamma(0)$ is called the \emph{tip of the slit}~$\gamma$ and the
point $\Gamma(T)$ is called the \emph{root} (or the \emph{landing point}) of
$\gamma$. We will also say that \emph{$\gamma$ lands at $\Gamma(T)$}.
\end{definition}

\begin{definition}
In what follows by a \emph{single-slit mapping} we will mean\footnote{More
generally, a \emph{single-slit mapping} of a domain $U$ into a domain~$D$ is a
conformal map of $U$ onto $D$ minus a slit. However, in this paper we will be
restricted to the case when $U=D=\uH$ and the slit $D\setminus g(U)$ lands at a
finite point.} a conformal map $g:\uH\Mapinto\uH$ such that $\uH\setminus
g(\uH)$ is a slit in~$\uH$ landing at some point on~$\R$.
\end{definition}

The following theorem implies easily that \emph{single-slit mappings admit
continuous extension to the boundary}.

\begin{pretheorem}[see,\textsl{e.g.}, \protect{\cite[Chapter~9,
Theorem~9.8]{Pommerenke:UnivalFunctions}}]\label{TH_cont_ext} A conformal
mapping $g:\uH\Mapinto\ComplexE$ admits a continuous extension ${\hat
g:\uHc\to\ComplexE}$ if and only if $\partial g(\uH)$ is a locally connected
set.
\end{pretheorem}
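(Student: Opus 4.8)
The statement is Carath\'eodory's theorem on the continuous extension of conformal maps, and the plan is to run the classical length--area argument (cf.\ \cite[Chapter~9]{Pommerenke:UnivalFunctions}), transported from the disk to the half--plane by the Cayley map. First I would reduce to the disk: since $\Cayley$ is a homeomorphism of $\overline{\UD}=\UD\cup\UC$ onto $\uHc=\uH\cup\RealE$ and a conformal isomorphism of $\UD$ onto $\uH$, the map $g$ admits a continuous extension to $\uHc$ if and only if $\phi:=g\circ\Cayley\colon\UD\Maponto\Omega$ admits one to $\overline{\UD}$, where $\Omega:=g(\uH)$, and $\de\Omega$ is locally connected precisely when $\de\phi(\UD)$ is. Thus it suffices to prove: a conformal $\phi\colon\UD\Maponto\Omega\subseteq\RS$ extends continuously to $\overline{\UD}$ if and only if $\de\Omega$ is locally connected. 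I would carry this out on the Riemann sphere equipped with its spherical metric, which is compact and of finite area; this avoids any trouble with $\Omega$ unbounded and $\infty\in\de\Omega$.

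\emph{Necessity.} If $\hat\phi\colon\overline{\UD}\to\RS$ is a continuous extension, then $\hat\phi(\overline{\UD})$ is compact, hence closed, and contains $\phi(\UD)=\Omega$, so it contains $\overline\Omega$. Also $\hat\phi(\UC)\cap\Omega=\emptyset$: if $\hat\phi(\zeta)\in\Omega$ for some $\zeta\in\UC$ then, taking $\UD\ni z_n\to\zeta$ and using continuity of $\phi^{-1}$ on the open set $\Omega$, we would get $z_n=\phi^{-1}\big(\phi(z_n)\big)\to\phi^{-1}\big(\hat\phi(\zeta)\big)\in\UD$, contradicting $z_n\to\zeta$. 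Since $\hat\phi(\overline{\UD})=\Omega\cup\hat\phi(\UC)\supseteq\overline\Omega$ and $\hat\phi(\UC)\subseteq\overline\Omega$, we conclude $\de\Omega=\hat\phi(\UC)$. As $\UC$ is a continuous image of $[0,1]$, so is $\de\Omega$, and by the Hahn--Mazurkiewicz theorem such a set is locally connected.

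\emph{Sufficiency} is the substantial direction. Fix $\zeta_0\in\UC$; the goal is that $\phi(z)$ has a limit as $\UD\ni z\to\zeta_0$. For small $\rho>0$ put $C_\rho:=\UD\cap\{z\colon|z-\zeta_0|=\rho\}$ and let $\ell(\rho)$ be the spherical length of the Jordan arc $\phi(C_\rho)$. A length--area estimate -- Cauchy--Schwarz on each $C_\rho$, then integration in $\rho$ against the spherical area of $\Omega$ -- gives $\int_0^1\ell(\rho)^2\,\rho^{-1}\,d\rho<+\infty$, so $\liminf_{\rho\to0}\ell(\rho)=0$ and one may choose $\rho_n\downarrow0$ with $\ell(\rho_n)\to0$. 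Each $\phi(C_{\rho_n})$ has finite spherical length, hence the limits of $\phi$ along its two ends exist and (by the argument used in the necessity part) lie on $\de\Omega$ at mutual spherical distance $\le\ell(\rho_n)$; thus $\phi(C_{\rho_n})$ is a crosscut of $\Omega$ of spherical diameter $\to0$, dividing $\Omega$ into two components. Let $\Omega_n$ be the one containing $\phi(U_n)$, where $U_n:=\UD\cap\{z\colon|z-\zeta_0|<\rho_n\}$. Granting that $\diam\Omega_n\to0$, we get $\diam\phi(U_n)\to0$, and since $\bigcap_n\overline{U_n}=\{\zeta_0\}$ in $\overline{\UD}$ the cluster set of $\phi$ at $\zeta_0$ shrinks to a single point, which we take as $\hat\phi(\zeta_0)$. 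It then remains to check that $\hat\phi$ is continuous on $\overline{\UD}$ -- routine, since the $\rho_n$ may be chosen to serve a whole sub-arc of $\UC$ at once.

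I expect $\diam\Omega_n\to0$ to be the main obstacle; it is the sole place where local connectedness is used. The idea is: for large $n$ the endpoints $p_n,q_n$ of the crosscut $\phi(C_{\rho_n})$ are at spherical distance $\le\ell(\rho_n)\to0$ in $\de\Omega$, so by local connectedness of the compact set $\de\Omega$ they lie in a connected subset $A_n\subseteq\de\Omega$ of small spherical diameter; then the continuum $\overline{\phi(C_{\rho_n})}\cup A_n$ separates the sphere and $\Omega_n$ is contained in a complementary component whose diameter one controls. Making this rigorous -- in particular, allowing $\de\Omega$ to fail to be a Jordan curve, and verifying that the part of $\de\Omega$ abutting $\Omega_n$ is the small arc $A_n$ and not the ``rest'' of $\de\Omega$, so that $\de\Omega_n\subseteq\phi(C_{\rho_n})\cup A_n$ -- is the technical heart of the proof; the rest is soft.
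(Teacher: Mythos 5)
First, a point of comparison: the paper does not prove Theorem~\ref{TH_cont_ext} at all --- it is quoted from \cite[Chapter~9, Theorem~9.8]{Pommerenke:UnivalFunctions}, and the accompanying remark only indicates that an elementary proof can be assembled from a cluster-set lemma of \cite{ColLoh:ClusterSets} and a topological lemma of \cite{HockingYoung:Topology}, avoiding prime ends and the No-Koebe-Arcs theorem. What you propose is the classical length--area (Carath\'eodory continuity) argument, essentially the proof in \cite[Theorem~2.1]{Pommerenke:BoundBehaConfMaps}, transported to $\uH$ by the Cayley map. That route is perfectly sound: the reduction to $\UD$ is harmless, working with the spherical metric correctly handles $\infty$, and your necessity half (showing $\de\Omega=\hat\phi(\UC)$ and invoking the easy direction of Hahn--Mazurkiewicz) is complete. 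The length--area estimate giving $\rho_n\downarrow0$ with $\ell(\rho_n)\to0$, and the fact that the crosscuts $\phi(C_{\rho_n})$ have endpoints on $\de\Omega$ at distance $\le\ell(\rho_n)$, are also fine.

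The gap is exactly where you say it is, and as submitted it is not filled: the claim $\diam\Omega_n\to0$ is the whole theorem, since it is the only place local connectedness enters, and you only sketch it. To close it you need (a) the passage from local connectedness of the compact set $\de\Omega$ to \emph{uniform} local connectedness; (b) a Janiszewski-type separation argument showing that $\Omega_n$ is disjoint from the continuum $K_n:=\overline{\phi(C_{\rho_n})}\cup A_n$ and has $\de\Omega_n\subset K_n$, hence is a full complementary component of $K_n$ in $\RS$; and (c) the spherical subtlety you do not mention: on $\RS$ a domain with small boundary need not be small (the complement of a small cap), so you must still rule out that $\Omega_n$ is the \emph{large} complementary component of $K_n$ --- e.g.\ by observing that $\phi(0)$ lies in the other component of $\Omega\setminus\phi(C_{\rho_n})$ and, for small $\varepsilon$, in the unbounded-side component of $\RS\setminus K_n$, since $\dist(\phi(0),\de\Omega)>0$. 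Finally, ``routine'' is too quick for the last step: existence of a limit of $\phi$ at each $\zeta_0\in\UC$ does not by itself give a \emph{continuous} extension; you need the uniform $\delta$--$\varepsilon$ control coming from (a) (equivalently, uniform continuity of $\phi$ near $\UC$), which should be stated and used explicitly. None of these steps is false --- they are the standard ones --- but until (a)--(c) and the uniformity argument are written out, the sufficiency direction is an outline rather than a proof.
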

\begin{unnumremark}
It is worth to mention that combining~\cite[Chapter~3,
Lemma~1]{ColLoh:ClusterSets} and arguments in~\cite[Chapter~9,
Theorem~9.8]{Pommerenke:UnivalFunctions} with~\cite[Chapter~3,
Lemma~3.29]{HockingYoung:Topology}, one can give a direct elementary proof of
the above theorem avoiding the usage of the theory of prime ends and that
of normal functions (the No-Koebe-Arcs Theorem).
\end{unnumremark}
\begin{remark}\label{RM:SlitLocConn}
Note that if $D$ is $\uH$ minus a slit, then $\partial D=\RealE\cup\gamma$ is locally connected as a union of two closed locally connected sets. Hence, any single-slit mapping $g:\uH\Mapinto\uH$ admits a continuous extension $\hat g\colon \uHc=\uH\cup\RealE\to\RS$.
\end{remark}
Since the extension in Theorem~\ref{TH_cont_ext} and in the above remark is unique, we will denote it  by the same symbol as the conformal map itself omitting the sign ``\,$\hat{~}\,$'', while any other extension which might disagree with the one under consideration will be denoted in a different way.

The following theorem is one of the key points in the proof of Theorem~\ref{TH_Kufarev_etal}.
\begin{pretheorem}\label{Th_PreOfSlitIsSlit}
Let $g \colon \uH \Mapinto \RS$ be a conformal mapping and $\gamma$ a slit in
the domain $D:=g(\uH)$. Then the set $g^{-1}(\gamma)$ is a slit in $\uH$.
\end{pretheorem}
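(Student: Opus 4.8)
The plan is to exploit the continuous boundary extension of $g$ (Remark~\ref{RM:SlitLocConn}, via Theorem~\ref{TH_cont_ext}) together with basic point-set topology to verify directly that $g^{-1}(\gamma)$ satisfies the definition of a slit in $\uH$. Write $\Gamma\colon[0,T]\Mapinto\overline D$ for a parameterization of $\gamma$, so $\Gamma$ is a homeomorphism onto its image with $\Gamma([0,T))=\gamma\subset D$ and $\Gamma(T)\in\de D$. Since $D=g(\uH)$ is $\RS$ minus a slit (hence $\de D$ is locally connected), $g$ extends continuously to $\hat g\colon\uHc\to\RS$; denote the extension again by $g$. First I would show that $g^{-1}\colon D\Maponto\uH$ likewise extends continuously to $\overline D$: apply the same extension theorem, noting that $\de\uH=\RealE$ is trivially locally connected, to get a continuous $h\colon\overline D\to\uHc$ with $h|_D=g^{-1}$. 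On the overlap the two extensions are mutually inverse continuous maps between the compact Hausdorff spaces $\uHc$ and $\overline D$ wherever this makes sense; the key preliminary fact is that $g\circ h=\id$ on $\overline D$ and $h\circ g=\id$ on $\uHc$, which follows by density of $D$ (resp.\ $\uH$) and continuity.

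Next, define $\Lambda:=h\circ\Gamma\colon[0,T]\to\uHc$. This is continuous as a composition of continuous maps. I claim $\Lambda$ is the desired parameterization of the slit $g^{-1}(\gamma)$. For $t\in[0,T)$ we have $\Gamma(t)\in\gamma\subset D$, so $\Lambda(t)=g^{-1}(\Gamma(t))\in\uH$; and $\Lambda([0,T))=g^{-1}(\gamma)$ since $g^{-1}$ is a bijection $D\to\uH$. Injectivity of $\Lambda$ on $[0,T)$ is immediate from injectivity of $g^{-1}$ on $D$ and of $\Gamma$ on $[0,T)$. For the endpoint: $\Gamma(T)\in\de D$, and I must argue $\Lambda(T)=h(\Gamma(T))\in\RealE=\de\uH$ and that $\Lambda(T)\ne\Lambda(t)$ for $t<T$. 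The boundary-valued point $\Lambda(T)$ cannot lie in $\uH$, for if $h(\Gamma(T))=w_0\in\uH$ then by continuity of $g$ at $w_0$ and $g(w_0)=g(h(\Gamma(T)))=\Gamma(T)$, while points near $\Gamma(T)$ along $\gamma$ are in $D=g(\uH)$ with $g$-preimages converging to $w_0\in\uH$ — this forces $\Gamma(T)\in g(\uH)=D$, contradicting $\Gamma(T)\in\de D$. Hence $\Lambda(T)\in\RealE$. Injectivity at the endpoint: if $\Lambda(T)=\Lambda(t_0)$ for some $t_0<T$ then $\Lambda(t_0)\in\uH\cap\RealE=\emptyset$, impossible.

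The remaining point — and the one I expect to be the main obstacle — is showing that $\Lambda$ is a \emph{homeomorphism} onto its image (an embedding), not merely a continuous injection; equivalently, that $g^{-1}(\gamma)\cup\{\Lambda(T)\}$, with the subspace topology, is an arc. Since $[0,T]$ is compact and $\uHc\subset\RS$ is Hausdorff, a continuous \emph{injection} $[0,T]\to\uHc$ is automatically a homeomorphism onto its image — so the crux is genuinely the injectivity of $\Lambda$ on all of $[0,T]$, which I have just reduced to the two cases above. Thus the real work is packaged into: (a) establishing the continuous extension of $g^{-1}$ and the inversion identities $g\circ h=\id$, $h\circ g=\id$ on the closures (here one uses that $\Gamma(T)$ is accessible and that $\de D\setminus\gamma\subset\RealE$, so local connectivity is available); and (b) the endpoint argument that $h$ maps $\de D$ into $\de\uH$ at the landing point, which is where one genuinely uses that $\Gamma(T)\in\de D$ and not merely $\Gamma(T)\in\overline D$. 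I would present (a) as a short lemma on mutually inverse boundary extensions, then assemble the pieces as above; once $g^{-1}$ is known to extend continuously, everything else is formal topology on compact Hausdorff spaces.
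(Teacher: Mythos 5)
There is a genuine gap, and it sits exactly at the step you call (a). First, a misreading of the hypothesis: the statement says that $\gamma$ is a slit \emph{in} $D=g(\uH)$, not that $D$ is $\RS$ minus a slit; $D$ is an arbitrary simply connected domain, so $\de D$ need not be locally connected and even the continuous extension of $g$ to $\uHc$ is not available in general (in the paper's application $D=\uH\setminus\gamma_t$, so there $g$ does extend, but the theorem as stated is more general). Much more seriously, the map $h\colon\overline D\to\uHc$ extending $g^{-1}$ with $g\circ h=\id$ on $\overline D$ and $h\circ g=\id$ on $\uHc$ does not exist precisely in the situation the theorem is needed for. Theorem~\ref{TH_cont_ext} cannot be invoked for $g^{-1}$: it concerns conformal maps \emph{defined on} $\uH$, and its criterion is local connectivity of the boundary of the \emph{image}; a continuous extension of the inverse to $\overline D$ (with the topology inherited from $\RS$) requires every boundary point of $D$ to be simple, which fails whenever $\de D$ contains a slit. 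Concretely, if $g$ is a single-slit mapping as in Proposition~\ref{Pr_PropertiesSingleSlitMap}, then by part~(iv) the two segments $[\alpha,\lambda]$ and $[\lambda,\beta]$ are \emph{both} mapped homeomorphically onto $\bar\gamma=\gamma\cup\{\xi_0\}$; hence $g$ is not injective on $\uHc$, so no $h$ with $h\circ g=\id$ on $\uHc$ can exist, and sequences in $D$ approaching an interior point of the slit from opposite sides have $g^{-1}$-images converging to two distinct real points, so $g^{-1}$ admits no continuous extension to $\overline D$ at all. The entire content of the theorem is the convergence of $g^{-1}(\Gamma(t))$ as $t\uparrow T$, i.e.\ continuity of the inverse along one particular access to one particular boundary point, and this cannot be obtained by quoting a global extension theorem for the inverse; it is the part that genuinely requires an argument.

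For comparison, the paper treats the statement as a known boundary-behaviour fact and, in the remark following it, sketches an elementary proof in the locally connected case which runs in the opposite direction to yours: pass to the disc, use the continuous extension of $g$ to $\UDc$ (not of $g^{-1}$), and suppose $g^{-1}\circ\Gamma$ fails to converge at $t=T$; then the cluster set of $g^{-1}\circ\Gamma$ at $T$ is a nondegenerate subarc $\mathcal C\subset\UC$ on which the extension of $g$ is constantly equal to the root $\xi_0$, and the Schwarz Reflection Principle together with the Uniqueness Principle forces $g\equiv\xi_0$, a contradiction. Your observations that a continuous injection of $[0,T]$ into a Hausdorff space is automatically an embedding, and that the limit point, once it exists, must lie on $\RealE$ and differs from the points $\Lambda(t)$, $t<T$, are correct and can be kept; what is missing is the existence of the limit itself, and in the fully general case (where $\de D$ is not locally connected) one cannot even extend $g$ continuously, so a cluster-set or prime-end argument is unavoidable.
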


\begin{unnumremark}
The above theorem means that if $\Gamma\colon[0,T]\longrightarrow\overline D$
is any parametrization of $\gamma$, then $g^{-1}\circ\Gamma\big\rvert_{[0,T)}$
has a continuous extension to the point $t=T$.
\end{unnumremark}

\begin{unnumremark}
It might be useful to have a simple proof of Theorem~\ref{Th_PreOfSlitIsSlit}
for the case when $\partial g(\uH)$ is locally connected and hence the
function~$g$ can be extended to a continuous map of~$\uHc$
into~$\RS$. First of all, note that we can pass to a
conformal map of~$\UD$ with a continuous extension to~$\UDc$, which we again
denote by~$g$. Now suppose on the contrary to the statement of
Theorem~\ref{Th_PreOfSlitIsSlit} that $g^{-1}(\gamma)$ is not a slit in $\D$.
Then there exists an arc $\mathcal{C}\subset\UC$ not reducing to a point such
that $g(\mathcal{C})=\lbrace\xi_0\rbrace$, where $\xi_0$ is the root
of~$\gamma$. Using the Schwarz Reflection Principle and the Uniqueness
Principle for holomorphic functions we conclude that $g\equiv\xi_0$, which is
not possible.
\end{unnumremark}

Assume that $g:\uH\Maponto D$ is a conformal map and $\partial D$ is locally
connected. The continuous extension of~$g$ given by Theorem~\ref{TH_cont_ext}
does not need to be injective on the boundary. The following statement allows
to understand better the mapping properties of~$g\rvert_{\partial\uH}$.

\begin{preproposition}\label{Pr_PreimCutPoi}
In the above notation, let $w_0\in\partial D$ and
$\mathcal{W}:=g^{-1}\left(\{w_0\}\right)$. Then the map
$\RealE=\partial \uH\supset\mathcal C~~\mapsto~~g(\mathcal
C)\subset\partial D$ establishes a bijective correspondence between the
connected components of $\RealE\setminus\mathcal{W}$ and those of $\partial
D\setminus\{w_0\}$. In particular, the set~$\mathcal W$ consists of
$\nu\in\Natural$ pairwise distinct points if and only if
$\partial{D}\setminus\{w_0\}$ has exactly $\nu$ connected components.
\end{preproposition}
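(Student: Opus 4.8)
The plan is to work with the continuous extension $g\colon\uHc\to\RS$ furnished by Theorem~\ref{TH_cont_ext} and the remark following it, together with the standard fact $g(\RealE)=\partial D$ (a consequence of the Open Mapping Theorem and the injectivity of $g$ on $\uH$). After a preliminary \Mobius change of coordinates in the target, under which neither $\mathcal W$ nor the correspondence to be established is affected, I may assume $w_0\neq\infty$. Put $\mathcal W=g^{-1}(w_0)\subset\RealE$; it is a non-empty closed set. The first observation, obtained by the Schwarz Reflection plus Uniqueness Principle argument of the Remark after Theorem~\ref{Th_PreOfSlitIsSlit} (applied to $g-w_0$), is that $g$ is non-constant on every non-degenerate sub-arc of $\RealE$; in particular $\mathcal W$ contains no such arc, and $\mathcal W\neq\RealE$.

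The technical core is a separation lemma: \emph{if $\zeta_1\neq\zeta_2$ in $\RealE$ satisfy $g(\zeta_1)=g(\zeta_2)=:p$ and $A_1,A_2$ are the two open arcs of $\RealE\setminus\{\zeta_1,\zeta_2\}$, then $g(A_1)\cap g(A_2)\subset\{p\}$.} To prove it I would use a crosscut. Let $\sigma$ be the hyperbolic geodesic of $\uH$ joining $\zeta_1$ and $\zeta_2$; it splits $\uH$ into two Jordan subdomains $\Omega_1,\Omega_2$ with $\overline{A_j}\subset\partial\Omega_j$. Since $g|_{\uH}$ is injective and non-constant, $J:=g(\overline\sigma)=g(\sigma)\cup\{p\}$ is a Jordan curve in $\RS$ with $J\cap\partial D=\{p\}$ and $\emptyset\neq g(\sigma)=J\setminus\{p\}\subset D$. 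Each of $g(\Omega_1),g(\Omega_2)$ is connected and disjoint from $J$, hence contained in one of the two components $V_1,V_2$ of $\RS\setminus J$; they lie in different ones, since otherwise $D\setminus J=g(\Omega_1)\cup g(\Omega_2)$ would miss, say, $V_2$, giving $V_2\subset\RS\setminus D$ and hence $\overline{V_2}\subset\RS\setminus D$, which contradicts $\partial V_2=J\supset g(\sigma)\neq\emptyset$. Labelling so that $g(\Omega_j)\subset V_j$ and taking closures in $\RS$, $g(\overline{A_j})\subset g(\overline{\Omega_j})=\overline{g(\Omega_j)}\subset V_j\cup J$; as $g(\overline{A_j})\subset\partial D$ and $J$ meets $\partial D$ only at $p$, this gives $g(\overline{A_j})\subset(V_j\cap\partial D)\cup\{p\}$, and then $V_1\cap V_2=\emptyset$ yields the lemma.

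The remainder is point-set topology. Let $\{I_\alpha\}$ be the components of $\RealE\setminus\mathcal W$ (open arcs). Each $g(I_\alpha)$ is connected and, since $\mathcal W=g^{-1}(w_0)$, avoids $w_0$, so it lies in a single component of $\partial D\setminus\{w_0\}$. If $|\mathcal W|\geq2$ then each $I_\alpha$ has two distinct endpoints $a,a'\in\mathcal W$ and every other $I_\beta$ is contained in $\RealE\setminus\overline{I_\alpha}$, so the lemma with $(\zeta_1,\zeta_2)=(a,a')$ gives $g(I_\alpha)\cap g(I_\beta)\subset\{w_0\}$, whence $g(I_\alpha)\cap g(I_\beta)=\emptyset$ for $\alpha\neq\beta$ (if $|\mathcal W|=1$ there is a single component and nothing to check). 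Also $\bigcup_\alpha g(I_\alpha)=g(\RealE\setminus\mathcal W)=\partial D\setminus\{w_0\}$. Hence $\{g(I_\alpha)\}$ is a partition of $\partial D\setminus\{w_0\}$ into non-empty connected sets, and it remains to check that each $g(I_\alpha)$ is relatively clopen there. It is closed because $\overline{g(I_\alpha)}=g(\overline{I_\alpha})\subset g(I_\alpha)\cup g(\mathcal W)=g(I_\alpha)\cup\{w_0\}$; it is open because $g|_{\uHc}$ maps compact sets to compact sets and $\overline{\bigcup_{\beta\neq\alpha}I_\beta}\subset\RealE\setminus I_\alpha$, so $\overline{\bigcup_{\beta\neq\alpha}g(I_\beta)}\subset g(\RealE\setminus I_\alpha)=\{w_0\}\cup\bigcup_{\beta\neq\alpha}g(I_\beta)$, which shows $\bigcup_{\beta\neq\alpha}g(I_\beta)$ is closed in $\partial D\setminus\{w_0\}$ and its complement $g(I_\alpha)$ is open there. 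Being a non-empty clopen connected subset, each $g(I_\alpha)$ is exactly a component of $\partial D\setminus\{w_0\}$, and the disjointness and covering established above make $\alpha\mapsto g(I_\alpha)$ a bijection between the two sets of components. The ``in particular'' claim follows: $\mathcal W$ consists of $\nu$ points precisely when $\RealE\setminus\mathcal W$ has $\nu$ components — here one uses that $\mathcal W$ contains no non-degenerate arc, so its complementary ``gaps'' are single points — which by the bijection is precisely when $\partial D\setminus\{w_0\}$ has $\nu$ components. The main obstacle is the separation lemma, in particular checking that $J$ is a genuine Jordan curve and that the two half-domains $\Omega_1,\Omega_2$ fall into opposite complementary components of $\RS\setminus J$.
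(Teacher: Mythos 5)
Your argument is correct, but note that the paper contains no proof of Proposition~\ref{Pr_PreimCutPoi} to compare it with: the statement is simply quoted from \cite{Pommerenke:BoundBehaConfMaps}, Chapter~2, Proposition~2.5. What you wrote is in essence the classical crosscut argument found in that reference, transplanted to $\uH$: the hyperbolic geodesic $\sigma$ joining two preimages of $p$ is a crosscut, its image $g(\sigma)\cup\{p\}$ is a Jordan curve meeting $\partial D$ only at $p$, and the Jordan curve theorem forces the images of the two half-planes cut off by $\sigma$ (hence of the two boundary arcs) into different complementary components; the rest is the clopen/partition bookkeeping. Your route buys self-containedness, in the spirit the authors advertise, at the price of quietly invoking a few standard facts which you should at least name: that the continuous extension satisfies $g(\RealE)=\partial D$ (properness of $g|_\uH$ onto $D$), that each complementary component of a Jordan curve in $\RS$ has the whole curve as its boundary, and that $\overline{I_\alpha}\setminus I_\alpha\subset\mathcal W$ for every component $I_\alpha$ of $\RealE\setminus\mathcal W$; all are routine, and your separation lemma itself (Jordan curve property of $J$, opposite sides for $g(\Omega_1)$, $g(\Omega_2)$) is argued correctly. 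The one step I would expand is the final counting claim: from $\nu$ components $I_1,\dots,I_\nu$ of $\RealE\setminus\mathcal W$ one gets $\mathcal W\subset\bigcup_j\bigl(\overline{I_j}\setminus I_j\bigr)$ because $\mathcal W$ has empty interior in $\RealE$ (it contains no non-degenerate arc, by your reflection argument), hence $\mathcal W$ is finite; and a finite $\mathcal W$ with $m$ points has exactly $m$ complementary arcs, so $m=\nu$. Your parenthetical remark gestures at this, but the density argument deserves an explicit sentence.
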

\noindent The proof of this proposition can be found, \textsl{e.g.},
in~\cite[Chapter~2, Proposition~2.5]{Pommerenke:BoundBehaConfMaps}.

Taking into account that an injective continuous mapping of an interval of the
form $(a,b)$ or $[a,b]$, $a<b$, into a simple curve has always the continuous
inverse, from Proposition~\ref{Pr_PreimCutPoi} one easily obtains the following
statement.
\begin{proposition}\label{Pr_PropertiesSingleSlitMap}
Let $g:\uH\Mapinto\uH$ be a single-slit map with $g(\infty)=\infty$, $\gamma:=\uH\setminus g(\uH)$. Then the following assertions hold:
\begin{enumerate}
\item[(i)] the preimage $g^{-1}(\xi_0)$ of the
root~$\xi_0$ of the slit $\gamma$ consists exactly of two points
$\alpha,\beta\in\R$, $\alpha<\beta$;
\item[(ii)] the preimage $g^{-1}(\omega_0)$ of the tip $\omega_0$
of the slit $\gamma$ consists of a unique point $\lambda\in(\alpha,\beta)$;
\item[(iii)] $g$ maps $\RealE\setminus[\alpha,\beta]=(\beta,+\infty)\cup\{\infty\}\cup(-\infty,\alpha)$ homeomorphically
onto $\RealE\setminus\lbrace{\xi_0}\rbrace$;
\item[(iv)] each of the segments $[\alpha,\lambda]$ and $[\lambda,\beta]$ is mapped by $g$
homeomorphically onto ${\bar\gamma:=\gamma\cup\{\xi_0\}}$.
\end{enumerate}
\end{proposition}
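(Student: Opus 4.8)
The plan is to extract all four assertions from Proposition~\ref{Pr_PreimCutPoi}, applied to $g$ — which, by Remark~\ref{RM:SlitLocConn}, extends continuously to $\uHc\to\RS$, and we keep the symbol $g$ for this extension — and to a short list of boundary points of $D:=g(\uH)=\uH\setminus\gamma$. The first step is purely topological. Fix a parameterization $\Gamma\colon[0,T]\to\uHc$ of $\gamma$ with tip $\omega_0:=\Gamma(0)$ and root $\xi_0:=\Gamma(T)$; by definition $\gamma\subset\uH$ and $\xi_0\in\R$, so $\overline\gamma=\gamma\cup\{\xi_0\}$ is a Jordan arc meeting the circle $\RealE$ exactly at $\xi_0$, and $\partial D=\RealE\cup\overline\gamma$. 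An elementary inspection gives the number of connected components of $\partial D\setminus\{w_0\}$: it is $2$ for $w_0=\xi_0$ (components $\gamma$ and $\RealE\setminus\{\xi_0\}$), it is $2$ for $w_0=\Gamma(t)$ with $0<t<T$ (components $\Gamma([0,t))$ and $\RealE\cup\Gamma((t,T])$), and it is $1$ for $w_0=\omega_0$ as well as for every $w_0\in\RealE\setminus\{\xi_0\}$.

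Now I feed these counts into Proposition~\ref{Pr_PreimCutPoi}. Since $\gamma\subset\uH$ and $\xi_0\in\R$, neither $\xi_0$ nor $\omega_0$ lies in $D$, so $g^{-1}(\xi_0),g^{-1}(\omega_0)\subset\RealE$, and both omit $\infty$ because $g(\infty)=\infty$. Hence $g^{-1}(\xi_0)=\{\alpha,\beta\}$ with distinct $\alpha,\beta\in\R$ — relabel so that $\alpha<\beta$ — and $g^{-1}(\omega_0)=\{\lambda\}$ with $\lambda\in\R$; this is (i) and half of (ii). Furthermore, at $w_0=\xi_0$ the bijection of Proposition~\ref{Pr_PreimCutPoi} pairs the two components $(\alpha,\beta)$ and $\mathcal C_{\mathrm{out}}:=(\beta,+\infty)\cup\{\infty\}\cup(-\infty,\alpha)$ of $\RealE\setminus\{\alpha,\beta\}$ with the two components $\gamma$ and $\RealE\setminus\{\xi_0\}$ of $\partial D\setminus\{\xi_0\}$; since $\infty\in\mathcal C_{\mathrm{out}}$ and $g(\infty)=\infty\notin\gamma$, it follows that $g(\mathcal C_{\mathrm{out}})=\RealE\setminus\{\xi_0\}$ and $g\big((\alpha,\beta)\big)=\gamma$. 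In particular $\lambda\notin\mathcal C_{\mathrm{out}}$ (its image $\omega_0\in\gamma$ is not in $\RealE$) and $\lambda\ne\alpha,\beta$ (since $\omega_0\ne\xi_0$), so $\lambda\in(\alpha,\beta)$; this completes (ii).

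For the injectivity statements the mechanism is that, if $g(p)=g(q)=:w_1$ with $p\ne q$ on $\RealE$, then Proposition~\ref{Pr_PreimCutPoi} forces $\partial D\setminus\{w_1\}$ to be disconnected. In (iii), two distinct points of $\mathcal C_{\mathrm{out}}$ with the same image would put $w_1\in\RealE\setminus\{\xi_0\}$, where $\partial D\setminus\{w_1\}$ is connected — a contradiction; so $g$ is injective on $\mathcal C_{\mathrm{out}}$, and a continuous injection of a topological open arc onto the simple arc $\RealE\setminus\{\xi_0\}$ has a continuous inverse (by the fact recalled just before the statement), giving (iii). For (iv), note that $g(\mathcal C_{\mathrm{out}})=\RealE\setminus\{\xi_0\}$ is disjoint from $\gamma$, so $g^{-1}(\gamma)\cap\RealE=(\alpha,\beta)$; hence $g\big([\alpha,\beta]\big)=\{\xi_0\}\cup\gamma=\overline\gamma$, and for $0<t<T$ both points of $g^{-1}(\Gamma(t))$ lie in $(\alpha,\beta)$. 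Thus $h:=\Gamma^{-1}\circ g\colon[\alpha,\beta]\to[0,T]$ is a continuous surjection with $h(\alpha)=h(\beta)=T$, $h(\lambda)=0$, $h^{-1}(0)=\{\lambda\}$, $h^{-1}(T)=\{\alpha,\beta\}$, and $\#\,h^{-1}(t)=2$ for $0<t<T$. I claim $h$ is injective on each of $[\alpha,\lambda]$ and $[\lambda,\beta]$: if $h(p)=h(q)=:s$ with $\alpha\le p<q\le\lambda$, then $s\notin\{0,T\}$ by the sizes of the fibres $h^{-1}(0)$ and $h^{-1}(T)$, while for $0<s<T$ we would have $h^{-1}(s)=\{p,q\}\subset[\alpha,\lambda]$, contradicting that the intermediate value theorem applied to $h$ on $[\lambda,\beta]$ — over which $h$ runs from $0$ to $T$ — yields a further point of $h^{-1}(s)$ in $(\lambda,\beta)$; the interval $[\lambda,\beta]$ is handled symmetrically. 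Being continuous and injective, $h$ restricted to $[\alpha,\lambda]$ (resp. to $[\lambda,\beta]$) is a homeomorphism onto an interval containing $0$ and $T$, i.e. onto $[0,T]$; composing with the homeomorphism $\Gamma\colon[0,T]\to\overline\gamma$ shows that $g=\Gamma\circ h$ maps each of $[\alpha,\lambda]$ and $[\lambda,\beta]$ homeomorphically onto $\overline\gamma$. This is (iv).

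The two topological steps are routine. The point that requires a genuine idea is the rigidity in (iv) — ruling out that $g$ \emph{folds} on $[\alpha,\lambda]$ — which is precisely what the second use of Proposition~\ref{Pr_PreimCutPoi}, at the interior points of the slit, combined with the intermediate value theorem, achieves; the contradiction that forces injectivity on $\mathcal C_{\mathrm{out}}$ in (iii) is of the same nature.
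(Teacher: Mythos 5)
Your proof is correct and follows exactly the route the paper intends: the paper states this proposition without a detailed proof, remarking only that it ``easily'' follows from Proposition~\ref{Pr_PreimCutPoi} together with the fact that a continuous injection of an interval into a simple curve has a continuous inverse, and your argument (component counts of $\partial D\setminus\{w_0\}$ at the root, the tip, the interior slit points and the points of $\RealE\setminus\{\xi_0\}$, plus the intermediate-value argument ruling out folding on $[\alpha,\lambda]$ and $[\lambda,\beta]$) is a correct filling-in of precisely those details.
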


\section{The chordal Loewner equation}

In this section we present a detailed elementary proof of Theorem~\ref{TH_Kufarev_etal}.

\subsection{``Chordal version'' of the Riemann Mapping Theorem}
\begin{proposition}\label{Prop_RiemMapThm}
Let $\gamma$ be a slit in the upper half-plane $\uH$ landing at some point $\xi_0\in\R$ and $\bar\gamma:=\gamma\cup\{\xi_0\}$. Then there exists a unique single-slit mapping $g_\gamma:\uH\Maponto H:=\uH\setminus\gamma$ satisfying the hydrodynamic condition
\begin{equation}\label{eq_HydroDynCond}
\lim_{\mathclap{z\to\infty}}\,\, g(z)-z = 0.
\end{equation}
Moreover, the following statements hold:
\begin{itemize}
\item[(i)]{$g_\gamma\big\rvert_{\uH}$ extends to a conformal map $g_\gamma^{*}\!$ of
$\RS\setminus\mathcal{C}$ onto $\RS\setminus(\bar\gamma\cup\bar\gamma^*)$, where $\mathcal{C}:=g_\gamma^{-1}(\bar\gamma)$ and $\bar\gamma^{*}$ is the reflection of $\bar\gamma$ with respect to $\Real$;}%
\item[(ii)]{$g_\gamma^{*}$ has a Laurent expansion at $\infty$ of the form
\begin{equation}\label{eq_FormLaurSerG}
g_\gamma^{*}(z) = z + \sum_{n=1}^{\infty}c_{n}z^{-n},
\end{equation}
with $c_n\in\R$ for all $n\in\N$ and $c_1<0$.}
\end{itemize}
\end{proposition}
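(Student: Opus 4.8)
I would establish existence first, then the three refinements (uniqueness, the conformal extension, the normalized Laurent expansion with $c_1<0$), since each part feeds the next. The backbone is the Riemann Mapping Theorem applied on the Riemann sphere together with the Schwarz Reflection Principle; the hydrodynamic normalization will be pinned down by a Möbius post-composition, and the sign $c_1<0$ will come from Lemma~\ref{LM:varphi_extended}(i) combined with a geometric/area argument.

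\textbf{Existence.} Since $\gamma$ is a slit in $\uH$ landing at $\xi_0\in\R$, the set $\bar\gamma=\gamma\cup\{\xi_0\}$ is a compact arc meeting $\R$ only at $\xi_0$. Let $\bar\gamma^{*}$ be its reflection across $\R$; then $K:=\bar\gamma\cup\bar\gamma^{*}$ is a compact connected subset of $\C$ whose complement $\RS\setminus K$ is connected (it is symmetric under $z\mapsto\bar z$ and contains a neighbourhood of $\infty$), and it contains more than one point, so $\RS\setminus K$ is a simply connected domain $\ne\RS$. By the Riemann Mapping Theorem there is a conformal map $\Phi\colon\RS\setminus K\Maponto\RS\setminus\overline{\D}$ (say) which we may normalize so that $\Phi(\infty)=\infty$ and $\Phi'(\infty)>0$; pre-composing a suitable affine map with a rotation, or rather post-composing $\Phi^{-1}$ with an affine map, I obtain a conformal $G\colon\RS\setminus K\Maponto\RS\setminus K$-type target of the form $G(z)=z+O(1/z)$ near $\infty$. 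More precisely: let $h:=\Phi^{-1}$, expand $h(\zeta)=a\zeta+b_0+b_1\zeta^{-1}+\cdots$ near $\infty$ with $a>0$, and replace $h$ by $\widetilde h(\zeta):=(h(\zeta)-b_0)/a$; then $G:=\widetilde h\circ\Phi$ maps $\RS\setminus K$ conformally onto $\RS\setminus K'$ for some compact $K'$, fixes $\infty$, and satisfies $G(z)-z\to 0$. By the symmetry of $K$, the map $z\mapsto\overline{G(\bar z)}$ is another such normalized conformal map of $\RS\setminus K$; I would like to conclude it equals $G$, but that is exactly the uniqueness issue, so instead I argue directly: $G$ restricted to $\uH$ is a conformal map onto some domain symmetric considerations do not immediately give. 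The cleaner route is to start from the \emph{slit domain} rather than from $K$. So: apply the Riemann Mapping Theorem to get a conformal map $\psi\colon\uH\Maponto H=\uH\setminus\gamma$ (both are simply connected proper subdomains of $\RS$ — here I use that $\gamma$ is a slit, so $H$ is indeed a domain); by Remark~\ref{RM:SlitLocConn} $\psi$ extends continuously to $\uHc$, and by Proposition~\ref{Pr_PropertiesSingleSlitMap} the boundary behaviour is completely understood, with $\psi$ mapping $\RealE$ onto $\RealE$ minus the root, and a two-point preimage of $\xi_0$. Post-composing $\psi$ with a real Möbius transformation I can arrange $\psi(\infty)=\infty$; a further real-affine adjustment $z\mapsto(\psi(z)-\mu)/\nu$ with $\nu>0$ makes the expansion at $\infty$ read $\psi(z)=z+c_1 z^{-1}+\cdots$ with no constant term (the linear coefficient is real and positive because $\psi$ maps a neighbourhood of $\infty$ in $\uH$ into $\uH$ preserving the ``real direction'', which forces $\psi'(\infty)>0$ once $\psi$ is real on $\R$ near $\infty$; alternatively use that $\Im\psi>0$ in $\uH$). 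This $g_\gamma$ is the desired map; condition~\eqref{eq_HydroDynCond} holds by construction.

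\textbf{Uniqueness.} If $g_1,g_2$ are two single-slit maps $\uH\Maponto H$ with $g_j(z)-z\to 0$, then $g_2^{-1}\circ g_1$ is a conformal automorphism of $\uH$, hence a real Möbius map $\phi$; from $g_j(z)=z+o(1)$ one gets $g_1=g_2\circ\phi$ with $\phi(z)=z+o(1)$ at $\infty$, and the only such real Möbius transformation is the identity (a Möbius map fixing $\infty$ is affine, $\phi(z)=\nu z+\mu$, and $\phi(z)-z\to 0$ forces $\nu=1,\mu=0$). Hence $g_1=g_2$.

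\textbf{Part (i): the reflected extension.} By Proposition~\ref{Pr_PropertiesSingleSlitMap}(iii) the continuous extension of $g_\gamma$ maps $\RealE\setminus[\alpha,\beta]$ homeomorphically onto $\RealE\setminus\{\xi_0\}$; in particular $g_\gamma$ maps a neighbourhood of each point of the open set $\RealE\setminus[\alpha,\beta]$ conformally across $\R$, so by the Schwarz Reflection Principle $g_\gamma$ extends to a holomorphic map on $\uH\cup(\RealE\setminus[\alpha,\beta])\cup\uH^{-}$, where $\uH^{-}$ is the lower half-plane, via $g_\gamma^{*}(z):=\overline{g_\gamma(\bar z)}$ for $z\in\uH^{-}$. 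The domain of this extension is precisely $\RS\setminus\mathcal C$ with $\mathcal C=g_\gamma^{-1}(\bar\gamma)$ (by Proposition~\ref{Pr_PropertiesSingleSlitMap}(i),(iv), $\mathcal C=[\alpha,\beta]$, a genuine segment, not reducing to a point). It remains to check $g_\gamma^{*}$ is \emph{injective} on $\RS\setminus\mathcal C$ and that its image is exactly $\RS\setminus(\bar\gamma\cup\bar\gamma^{*})$: injectivity on each half-plane is clear, injectivity on $\RealE\setminus[\alpha,\beta]$ is Proposition~\ref{Pr_PropertiesSingleSlitMap}(iii), and no cancellation between the two half-planes can occur because $g_\gamma(\uH)=H\subset\uH$ lies strictly above $\R$ off $\bar\gamma$ while $g_\gamma^{*}(\uH^{-})$ lies strictly below; the image identity then follows by tracking boundaries. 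This is the step I expect to be the \emph{main obstacle} — making the gluing of the two half-plane images rigorous, i.e.\ verifying that the reflected map is globally injective and that its omitted set is exactly $\bar\gamma\cup\bar\gamma^{*}$ and not something larger — and I would handle it by a careful open-mapping plus boundary-correspondence argument, invoking Proposition~\ref{Pr_PreimCutPoi} to control how the boundary of $H$ is parametrized.

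\textbf{Part (ii): the Laurent expansion and the sign of $c_1$.} That $g_\gamma^{*}$ has an expansion $z+\sum_{n\ge 1}c_n z^{-n}$ at $\infty$ is immediate since it is conformal near $\infty$, fixes $\infty$, and has derivative $1$ there by~\eqref{eq_HydroDynCond}. The reality of all $c_n$ follows from the reflection symmetry $g_\gamma^{*}(z)=\overline{g_\gamma^{*}(\bar z)}$, which forces $\overline{c_n}=c_n$. For the sign: apply the Schwarz Integral Formula of Proposition~\ref{pr_SchwarzIntegralForumula} to $f(z):=g_\gamma(z)-z$, which is holomorphic in $\uH$, continuous on $\uHc$ (by Remark~\ref{RM:SlitLocConn}), vanishes at $\infty$, and whose boundary values on $\R$ satisfy the integrability hypothesis because $f$ is supported (in the sense of being nonzero) in boundary terms only over the compact interval where $g_\gamma$ hits $\bar\gamma$. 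This gives $f(z)=\frac1\pi\int_\R \Im\{f(\xi)\}\,(\xi-z)^{-1}\,d\xi$, so expanding in powers of $1/z$ yields $c_1 = -\frac1\pi\int_\R \Im\{f(\xi)\}\,d\xi = -\frac1\pi\int_\R \Im\{g_\gamma(\xi)\}\,d\xi$. Now on $\R\setminus[\alpha,\beta]$ the extended $g_\gamma$ is real, so $\Im\{g_\gamma(\xi)\}=0$ there, while on $[\alpha,\beta]$ we have $g_\gamma(\xi)\in\bar\gamma\subset\uH\cup\{\xi_0\}$, hence $\Im\{g_\gamma(\xi)\}\ge 0$ with strict positivity on a set of positive measure (the two sub-arcs $[\alpha,\lambda]$ and $[\lambda,\beta]$ each map onto $\bar\gamma$, which is not contained in $\R$). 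Therefore $\int_\R\Im\{g_\gamma(\xi)\}\,d\xi>0$, i.e.\ $c_1<0$, as claimed.
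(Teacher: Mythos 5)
Your proposal is, for the existence, the reflected extension in (i), the reality of the coefficients, and the uniqueness, essentially the paper's own route: Riemann Mapping Theorem onto $H$, continuous boundary extension (Remark~\ref{RM:SlitLocConn}), boundary correspondence from Proposition~\ref{Pr_PropertiesSingleSlitMap}, Schwarz reflection across $\RealE\setminus[\alpha,\beta]$, and normalization by a real affine map. One concrete slip you must fix: you normalize by \emph{post}-composition, $z\mapsto(\psi(z)-\mu)/\nu$, which replaces the image $H$ by $(H-\mu)/\nu$, so the resulting map is no longer onto $\uH\setminus\gamma$ and the existence argument as written fails. The normalization has to act on the \emph{domain} side: the reflected map has expansion $az+b+\dots$ at $\infty$ with $a>0$, $b\in\R$, so one sets $L(z):=az+b$ (an automorphism of $\uH$) and $g_\gamma:=\psi\circ L^{-1}$, exactly as in the paper; this leaves the image equal to $H$ and yields the expansion $z+\sum c_nz^{-n}$. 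Your uniqueness argument shows you know that domain-side automorphisms are the right gauge freedom, so this is a repairable slip rather than a missing idea; note also that the Laurent expansion you invoke during the normalization already presupposes the reflection across $\RealE$ near $\infty$, i.e.\ the substance of (i), so that step should logically precede the normalization, as it does in the paper. Finally, the point you flag as the ``main obstacle'' in (i) is lighter than you fear: $g_\gamma(\uH)=H\subset\uH$, the reflected map sends the lower half-plane onto the reflection $H^*$ of $H$, and $g_\gamma(\RealE\setminus[\alpha,\beta])=\RealE\setminus\{\xi_0\}$; these three sets are pairwise disjoint and their union is exactly $\RS\setminus(\bar\gamma\cup\bar\gamma^*)$, which gives injectivity and the omitted set at once.

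Where you genuinely diverge from the paper is the sign $c_1<0$. The paper shows, via the maximum principle applied to $\Im(z-g_\gamma(z))$, that $h(z):=g_\gamma(z)-z$ maps $\uH$ into itself, and then excludes $c_1\ge0$ by examining $\Im h$ along a ray $\rho e^{i\theta_0}$ chosen according to the first nonvanishing coefficient. You instead apply the Schwarz integral formula (Proposition~\ref{pr_SchwarzIntegralForumula}) to $f(z)=g_\gamma(z)-z$ and read off $c_1=-\tfrac1\pi\int_\R\Im\{g_\gamma(\xi)\}\,d\xi<0$, since $\Im g_\gamma\ge0$ on $\R$, vanishes off $[\alpha,\beta]$, and is strictly positive on $(\alpha,\beta)$. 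This is valid: the hypotheses of Proposition~\ref{pr_SchwarzIntegralForumula} hold because $\Im f$ is continuous with compact support and $f(\infty)=0$ follows from the Laurent expansion already established. It is in fact the same mechanism the paper deploys later, in Lemma~\ref{LM_varphi_st}(ii)--(iii), to identify $c_1(s,t)=s-t$; your variant has the advantage of producing an explicit formula for $c_1$ immediately, while the paper's Step~2 is more elementary at this stage in that it needs no integral representation.
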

\begin{proof} The proof is divided into 3 steps.\mbox{~}
\step1{we prove first the existence of the map $g_\gamma$, assertion (i) and expansion~\eqref{eq_FormLaurSerG}.}
Since $H\subsetneq\Complex$ is a simply connected domain\footnote{To be completely rigorous, one has to use here some basic topological arguments, including Janiszewski Theorem. See \cite[\S1.5]{Pommerenke:UnivalFunctions}.}, according to the Riemann Mapping Theorem, there exists a conformal map $g_0$ of $\D$ onto $H$. As we already mentioned (see Remark~\ref{RM:SlitLocConn}) $g_0$ extends continuously to $\uHc$. Recall that in such case, we use the same notation for the extended map from $\uHc$
into~$\ComplexE$. Moreover, precomposing, if necessary, $g$ with a \Mobius transforation of~$\uH$, we may assume that $g_0(\infty)=\infty$.

Therefore, by Proposition~\ref{Pr_PropertiesSingleSlitMap}, $g_0\big(\partial\uH\setminus[\alpha,\beta]\big)=\partial\uH\setminus\{\xi_0\}$, where $[\alpha,\beta]=g_0^{-1}(\bar\gamma)$. Hence, by the Schwarz Reflection Principle, $g_0\rvert_{\uH}$ can be extended to a conformal map $g_{0}^{*}$ of $\RS\setminus[\alpha,\beta]$ onto $\RS\setminus(\bar\gamma\cup\bar\gamma^*)$.

Since $g^*_{0}(\infty)=g_0(\infty)=\infty$ and $g^*_{0}$ is a conformal map, we see that $\infty$ is a simple pole of $g_{0}^{*}$. As a consequence, the map $g_{0}^{*}$ has a Laurent expansion at $\infty$ of the form
\begin{equation*}
g_{0}^*(z) = az + b + \sum_{n=1}^{\infty}c_{n}^0z^{-n},
\end{equation*}
where $a\neq0$. Furthermore, note that $\overline{g_{0}^{*}(\bar{z})} = g_{0}^{*}(z)$ for all $z\in\Real\setminus[\alpha,\beta]$ and that both sides in this equality are holomorphic in $z$ on $\C\setminus[\alpha,\beta]$. Therefore, the equality holds for all $z\in\C\setminus[\alpha,\beta]$. It follows that the coefficient $a$, $b$, and $c_n^0$, for all $n\in\N$, are real.\\
Taking into account that
\begin{equation*}
a\,=\,\Re a\,=\,\Re\,\,\lim_{\mathclap{y\to+\infty}}\frac{g_0(iy)}{iy}=\,
\lim_{\mathclap{y\to+\infty}}\frac{\Im g_0(iy)}{y}
\end{equation*}
and that $\Im{g_0(iy)}>0$ for all $y>0$, we finally conclude that $a>0$.

Since $a>0$ and $b\in\Real$, the linear  function $L(z):=az+b$ is a \Mobius transformation of~$\uH$ and hence $g_\gamma:=g_{0}\circ L^{-1}$ is a conformal map of $\uH$ onto $H$.  Furthermore, an easy computation shows that the extension~$g_\gamma^*=g^*_{0}\circ L^{-1}$ of~$g_\gamma|_{\uH}$ to $\ComplexE\setminus\mathcal{C}$, where $\mathcal C:=g_\gamma^{-1}(\bar\gamma)=L([\alpha,\beta])$, is represented in a neighborhood of~$\infty$ by the Laurent expansion~\eqref{eq_FormLaurSerG} with all coefficients~$c_n\in\Real$. This completes Step 1.

\step2{now we show that $c_1<0$.} Since $g_\gamma(z)-z$ is not constant in~$\uH$, applying the Maximum Principle to the harmonic function $\uH\ni z\mapsto \Im(z-g_\gamma(z))$, which extends to a continuous real-valued function on $\uHc$, we conclude that the holomorphic function $h(z):=g_\gamma(z)-z=c_1/z+c_2/z^2+\ldots$ maps $\uH$ into itself. Set
$$
k_0:=\min\{k\in\Natural\colon c_k\neq0\}\quad\text{and}\quad
\theta_0:=\frac\pi{2k_0}\left(2-\frac{c_{k_0}}{|c_{k_0}|}\right).
$$
If $c_1\ge0$, then $\theta_0\in(0,\pi)$. In this case we would have $\Im h<0$ on the ray $z=\rho e^{i\theta_0}\in\uH$ for all $\rho>0$ large enough. Thus $c_1<0$.

\step3{it remains to show that the map~$g_\gamma$ is unique.} Let $\tilde g_\gamma$ be another conformal mapping of~$\uH$ onto~$H$ satisfying~\eqref{eq_HydroDynCond}. Then $L:=\tilde g_\gamma^{-1}\circ g_\gamma$ is a \Mobius transformation of~$\uH$ fixing~$\infty$. Therefore, it is of the form $L(z)=az+b$. Furthermore,
$g_\gamma(z)-z=\tilde g_\gamma(az+b)-z=(a-1)z+b+o(1)$ as $z\to \infty$. Thus, by~\eqref{eq_HydroDynCond}, $a=1$, $b=0$, and consequently $\tilde g_\gamma=g_\gamma$. The proof is now complete.
\end{proof}

\subsection{Standard parametrization of slits in~$\uH$.}\label{SS_standParam}
Throughout this subsection we consider a slit $\gamma$ in~$\uH$ landing at some point~$\xi_0\in\Real$. Let $\Gamma\colon[0,T]\longrightarrow\uHc$, $T>0$, be an arbitrary parametrization of this slit. For each $t\in[0,T)$, the set $\gamma_t:=\Gamma\big([t,T)\big)$ is a slit in~$\uH$. Hence by Proposition~\ref{Prop_RiemMapThm} there exists a unique single-slit map $g_{\gamma_t}$ satisfying the hydrodynamic normalization~\eqref{eq_HydroDynCond} such that $g_{\gamma_t}(\uH)=\uH\setminus\gamma_t$. Denote by~$c_1(t)$ the value of the coefficient $c_1$ in Laurent expansion~\eqref{eq_FormLaurSerG} of $g_{\gamma_t}^*$. To include the case $t=T$ we set $\gamma_T:=\emptyset$, $g_{\gamma_t}:=\id_{\uH}$ and, correspondingly, $c_1(T):=0$.

\begin{definition}
A parametrization  $\Gamma\colon[0,T]\longrightarrow\uHc$, $T>0$, of the slit $\gamma$ is said to be a \emph{standard parametrization of} $\gamma$ if  $c_1(t)=t-T$ for all $t\in[0,T]$.
\end{definition}
The main result of this subsection is as follows.
\begin{proposition}\label{PR_standardParam}
There exists a unique standard parametrization $\Gamma_0$ of the slit $\gamma$.
\end{proposition}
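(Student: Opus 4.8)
\noindent\emph{Proof plan.}

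The plan is to reduce the proposition to two facts about the function $c_1\colon[0,T]\to\R$ introduced above (with $c_1(T):=0$): that it is \emph{strictly increasing} and \emph{continuous}. Granting these, $c_1$ is a homeomorphism of $[0,T]$ onto $[c_1(0),0]$, so I would put $T_0:=-c_1(0)>0$ and define the increasing homeomorphism $\tau\colon[0,T_0]\to[0,T]$ by $\tau(u):=c_1^{-1}(u-T_0)$, which satisfies $\tau(0)=0$ and $\tau(T_0)=T$. Then $\Gamma_0:=\Gamma\circ\tau$ is again a parametrization of $\gamma$, and for every $u\in[0,T_0]$ the slit $\Gamma_0\big([u,T_0)\big)=\Gamma\big([\tau(u),T)\big)=\gamma_{\tau(u)}$ has associated $c_1$-value $c_1(\tau(u))=u-T_0$, i.e.\ $\Gamma_0$ is standard. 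For uniqueness, if $\widetilde\Gamma_0\colon[0,\widetilde T_0]\to\uHc$ is any standard parametrization of $\gamma$, then $\rho:=\Gamma_0^{-1}\circ\widetilde\Gamma_0$ is an increasing homeomorphism onto $[0,T_0]$ with $\rho(0)=0$, and standardness of both parametrizations forces $u-\widetilde T_0=\rho(u)-T_0$ for all $u$, hence $\widetilde T_0=T_0$ and $\rho=\mathrm{id}$, so $\widetilde\Gamma_0=\Gamma_0$.

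Strict monotonicity is the easier point. For $0\le s<t<T$, the sub-arc $\Gamma\big([s,t)\big)$ is a slit in the domain $D:=g_{\gamma_t}(\uH)=\uH\setminus\gamma_t$ landing at the tip $\Gamma(t)$ of $\gamma_t$, so by Theorem~\ref{Th_PreOfSlitIsSlit} the set $\varepsilon:=g_{\gamma_t}^{-1}\big(\Gamma([s,t))\big)$ is a slit in $\uH$, which by Proposition~\ref{Pr_PropertiesSingleSlitMap} lands at a point of $\R$. Since $g_{\gamma_t}$ maps $\uH\setminus\varepsilon$ conformally onto $D\setminus\Gamma([s,t))=\uH\setminus\gamma_s$, the composition $g_{\gamma_t}\circ g_\varepsilon$ is a conformal map of $\uH$ onto $\uH\setminus\gamma_s$ satisfying the hydrodynamic condition, hence equals $g_{\gamma_s}$ by the uniqueness in Proposition~\ref{Prop_RiemMapThm}; comparing the coefficients of $z^{-1}$ gives $c_1(s)=c_1(t)+c_1^{(\varepsilon)}$, where $c_1^{(\varepsilon)}<0$ by Proposition~\ref{Prop_RiemMapThm}(ii). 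Thus $c_1(s)<c_1(t)$, and together with $c_1(t)<0=c_1(T)$ for $t<T$ (again Proposition~\ref{Prop_RiemMapThm}(ii)) this gives strict monotonicity on $[0,T]$.

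The continuity of $c_1$ is where the main work lies, and I expect it to be the principal obstacle. Fix $t_0$ and a sequence $t_n\to t_0$; it is enough to show that every subsequence has a further subsequence along which $c_1(t_n)\to c_1(t_0)$. Put $h_n(z):=g_{\gamma_{t_n}}(z)-z$; arguing as in Step~2 of the proof of Proposition~\ref{Prop_RiemMapThm}, $h_n$ maps $\uH$ into $\uH$, and applying Lemma~\ref{LM:varphi_extended} to $g_{\gamma_{t_n}}^{*}$ together with $\gamma_{t_n}\subseteq\gamma$ yields a uniform bound $|h_n(z)|\le C/|z|$ for $|z|$ large and, moreover, shows that the sets $\mathcal C_{t_n}:=g_{\gamma_{t_n}}^{-1}(\bar\gamma_{t_n})$ lie in a fixed bounded subset of $\R$. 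Hence $\{h_n\}$ is normal on $\uH$ and, after passing to a subsequence, $g_{\gamma_{t_n}}\to g$ locally uniformly, where $g$ is univalent (non-constant because $h(\infty)=0$), maps $\uH$ into $\uH$, and satisfies $g(z)-z\to0$; using the uniform bound near $\infty$, the convergence (and $g$ itself) extends holomorphically across $\infty$. A Hurwitz argument shows that $g$ omits $\gamma_{t_0}$: when $t_n<t_0$ every point of $\gamma_{t_0}$ lies in $\gamma_{t_n}$ and is therefore omitted by $g_{\gamma_{t_n}}$, and when $t_n\ge t_0$ one argues the same way with the points of $\Gamma\big((t_0,T)\big)$ and then passes to closures. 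Thus $g(\uH)\subseteq\uH\setminus\gamma_{t_0}$.

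For the reverse inclusion I would run a second normal-family argument on the inverse maps $G_n:=g_{\gamma_{t_n}}^{-1}$. A compactness argument on $\Gamma$ shows that every compact $K\subset\uH\setminus\gamma_{t_0}$ is disjoint from $\gamma_{t_n}$ for all large $n$, so $G_n|_K$ is eventually defined; since the $G_n$ are $\uH$-valued they form a normal family on $\uH\setminus\gamma_{t_0}$, and after a further subsequence $G_n\to G$ locally uniformly there, with $G$ holomorphic and injective. Passing to the limit in $G_n\circ g_{\gamma_{t_n}}=\mathrm{id}_{\uH}$ gives $G\circ g=\mathrm{id}_{\uH}$, which forces $g$ to be a conformal bijection of $\uH$ onto $\uH\setminus\gamma_{t_0}$. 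By the uniqueness in Proposition~\ref{Prop_RiemMapThm} (or, in the degenerate case $\gamma_{t_0}=\emptyset$, directly), $g=g_{\gamma_{t_0}}$. Since $g_{\gamma_{t_n}}^{*}$ and $g_{\gamma_{t_0}}^{*}$ are holomorphic off a fixed bounded set, reading off the coefficient of $z^{-1}$ by a contour integral over a fixed large circle yields $c_1(t_n)\to c_1(t_0)$ along the chosen subsequence, which completes the proof of continuity and hence of the proposition. The delicate parts are the uniform estimates near $\infty$ drawn from Lemma~\ref{LM:varphi_extended} and the normality/Hurwitz bookkeeping that identifies the limit $g$ as $g_{\gamma_{t_0}}$; the assembly in the first two paragraphs is routine.
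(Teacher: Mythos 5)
Your proposal is correct in substance, and its overall skeleton (reduce to continuity plus strict monotonicity of $t\mapsto c_1(t)$, then reparametrize by the inverse of $c_1$ shifted by $T_0=-c_1(0)$, with uniqueness following from standardness) is exactly the paper's assembly; your monotonicity argument is also the paper's, since composing $g_{\gamma_t}$ with the map associated to the pulled-back slit is precisely the transition map $\varphi_{s,t}$ of Lemma~\ref{LM:phi_st}, giving $c_1(s)-c_1(t)=c_1(s,t)<0$. Where you genuinely diverge is the continuity of $c_1$, which the paper gets almost for free: it applies the Area-Theorem estimate of Lemma~\ref{LM:varphi_extended}\,(i), $|c_1(s,t)|\le\min\big(\diam\mathcal C_{s,t},\diam(\bar{\mathcal J}_{s,t}\cup\bar{\mathcal J}_{s,t}^*)\big)^2$, to the transition maps and invokes Lemma~\ref{LM:ArcsShrinks} (the arcs shrink to $\lambda(t)$ resp.\ $\lambda(s)$), so $c_1(u,t)\to0$ and $c_1(s,u)\to0$ and continuity follows in two lines — and these same ingredients are reused later in the proof of Theorem~\ref{TH_Kuf-et-al_ODE}, so their cost is amortized. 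You instead run a Carath\'eodory-kernel-type compactness argument on the maps $g_{\gamma_{t_n}}$ themselves: uniform bounds near $\infty$ from Lemma~\ref{LM:varphi_extended}, normality and Hurwitz to extract a univalent limit omitting $\gamma_{t_0}$, a second normal-family pass on the inverses to get surjectivity, identification with $g_{\gamma_{t_0}}$ by the uniqueness in Proposition~\ref{Prop_RiemMapThm}, and coefficient convergence via a Cauchy integral over a fixed circle. This is correct but heavier, needing noticeably more bookkeeping, and two steps deserve explicit completion: the uniform decay $|h_n(z)|\le C/|z|$ requires a short exterior maximum-modulus argument applied to $zh_n(z)$ after the crude bound from Lemma~\ref{LM:varphi_extended}\,(iv), and $G\circ g=\mathrm{id}_\uH$ alone does not force surjectivity — you must note that $G$ maps into $\uH$ and that $g\circ G=\mathrm{id}$ propagates from the open set $g(\uH)$ to the connected domain $\uH\setminus\gamma_{t_0}$ by the identity principle. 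What your route buys is independence from Lemma~\ref{LM:ArcsShrinks} (hence from the boundary-correspondence input behind it) for this step, and a stronger intermediate conclusion (locally uniform convergence $g_{\gamma_{t_n}}\to g_{\gamma_{t_0}}$, not just convergence of $c_1$); what the paper's route buys is brevity and quantitative one-sided control of $c_1$ that feeds directly into the derivation of the Loewner ODE.
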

To prove this proposition we need several lemmas, some of which will be used also in the next subsection. Again, fix \emph{any} parametrization $\Gamma\colon[0,T]\longrightarrow\uHc$, $T>0$, of the slit~$\gamma$.

For $s,t\in[0,T]$, $s\le t$, we define, see Figure~\ref{FG},
\begin{align*}
\varphi_{s,t}(z) &:=\big(g_{\gamma_t}^{-1}\circ g_{\gamma_s}\big)(z), \qquad z\in\uH,\\
\lambda(t) &:= g_{\gamma_t}^{-1}\big(\Gamma(t)\big)\in\R,\\
\mathcal{J}_{s,t} &:= g_{\gamma_t}^{-1}\big(\Gamma([s,t))\big)\subset\uH,\quad \bar{\mathcal J}_{s,t} := g_{\gamma_t}^{-1}\big(\Gamma([s,t])\big)=\mathcal J_{s,t}\cup\{\lambda(t)\},\\
\mathcal{C}_{s,t} &:= g_{\gamma_s}^{-1}\big(\Gamma([s,t])\big)\subset\Real.
\end{align*}
\begin{figure}[t]
\ignorespaces
\centering
\begin{tikzpicture}
    \coordinate (A) at (-2,0);
    \coordinate (B) at (2,0);

    \matrix[row sep=20pt, column sep=0pt] {
    \pgfmatrixnextcell
    \draw (A)--(B);
    \node [inner sep=0.75pt, minimum size=0pt, circle, label=left:$\R$] at (A) {};
    \foreach \x in {-2,-1.875,...,-1.125} \draw (\x,0)--(\x+0.06,-0.125);
    \draw[line width=1.5pt] (-1,0)--(1,0);
\node [inner sep=1pt, minimum size=0pt, draw, circle, fill] at (-1,0) {};
\node [inner sep=1pt, minimum size=0pt, draw, circle, fill] at (1,0) {};
    \node [inner sep=1pt, minimum size=0pt, draw, circle, fill, label=above:$\scriptstyle\lambda(s)$]
           at (0.25,0) {};
    \node [inner sep=0pt, minimum size=0pt, circle, label=below:$\mathcal{C}_{s,t}$] at (-0.25,0) {};
    \foreach \x in {1.05,1.175,...,2} \draw (\x,0)--(\x+0.06,-0.125);
    \node [inner sep=0pt, minimum size=0pt, circle, label=right:$\uH$] at ($(A)+(0,3)$) {};
    \node (HStoH) [inner sep=15pt, minimum size=0pt, circle] at (1,3) {};
    \node (HStoHT) [inner sep=0pt, minimum size=0pt, circle] at (1,-1) {};
    \pgfmatrixnextcell\pgfmatrixnextcell
    \draw (A)--(B);
    \node [inner sep=0.75pt, minimum size=0pt, circle, label=right:$\R$] at (B) {};
    \node [inner sep=0pt, minimum size=0pt, circle, label=right:${H_t:=\uH\setminus\gamma_t}$] at ($(B)+(0,3)$) {};
    \foreach \x in {-2,-1.875,...,2} \draw (\x,0)--(\x+0.06,-0.125);
    \draw (0,0) .. controls (0.5,0.25) and (-0.5,0.75) .. (0.25,1.25);
    \node [inner sep=1pt, minimum size=0pt, draw, circle, fill, label=right:$\vphantom{\int^1}{\scriptstyle\Gamma(t)}$]
           at (0.25,1.25) {};
    \draw[line width=1.5pt] (0.25,1.25) .. controls (1,1.6) and (-0.25,2.25) .. (0.25,2.5);
    \node [inner sep=1pt, minimum size=0pt, draw, circle, fill, label=right:$\scriptstyle\Gamma(s)$]
           at (0.25,2.5) {};
    \draw[dashed] (0.25,2.5) .. controls (0.75,2.75) and (0,3) .. (0.25,3.3);
    \node [inner sep=1pt, minimum size=0pt, draw, circle, label=right:$\scriptstyle\Gamma(0)$]
           at (0.25,3.3) {};
    \node [inner sep=5pt, minimum size=0pt, circle, label=left:$\gamma$] at (0.25,2.25) {};
    \node (HfromHS) [inner sep=15pt, minimum size=0pt, circle] at (-1,3) {};
    \node (HtoHT) [inner sep=0pt, minimum size=0pt, circle] at (-0.5,-1) {}; \\

    \pgfmatrixnextcell\pgfmatrixnextcell\pgfmatrixnextcell\\
    \pgfmatrixnextcell\pgfmatrixnextcell\pgfmatrixnextcell\\

    \pgfmatrixnextcell\pgfmatrixnextcell
    \draw (A)--(B);
    \node [inner sep=0.75pt, minimum size=0pt, circle, label=left:$\R$] at (A) {};
    \node [inner sep=7.5pt, minimum size=0pt, circle, label=below:$\uH$] at (0,0) {};
    \foreach \x in {-2,-1.875,...,2} \draw (\x,0)--(\x+0.06,-0.125);
    \draw[line width=1.5pt] (0,0) .. controls (0.5,0.25) and (-0.5,0.75) .. (0,1)
                                  .. controls (0.5,1.25) and (-0.5,1.5) .. (-0.25,1.75);
    \node [inner sep=1pt, minimum size=0pt, draw, circle, fill, label=above left:$\scriptstyle\lambda(t)$]
           at (0,0) {};
    \node [inner sep=1pt, minimum size=0pt, draw, circle, fill,
           label=above:$\scriptstyle\varphi_{s,t}(\lambda(s))$] at (-0.25,1.75) {};
    \node [inner sep=5pt, minimum size=0pt, circle, label=below right:$\bar{\mathcal{J}}_{s,t}$] at (0,1.75) {};
    \node (HTfromHS) [inner sep=10pt, minimum size=0pt, circle] at (-1.5,2) {};
    \node (HTfromH) [inner sep=10pt, minimum size=0pt, circle] at (1.5,2) {};
    \pgfmatrixnextcell \\ };

    \path[>=angle 45, line width=0.85pt] (HStoH)     edge[->,bend left=30]   node[above] {$g_{\gamma_s}$} (HfromHS)
                (HStoHT)    edge[->]                node[left] {$\varphi_{s,t}$} (HTfromHS)
                (HtoHT)     edge[->]                node[right] {$g_{\gamma_t}^{-1}$} (HTfromH);

\end{tikzpicture}
\ignorespacesafterend
\caption{Construction of $\varphi_{s,t}$, $\mathcal J_{s,t}$ and $\mathcal C_{s,t}$.}\label{FG}
\end{figure}
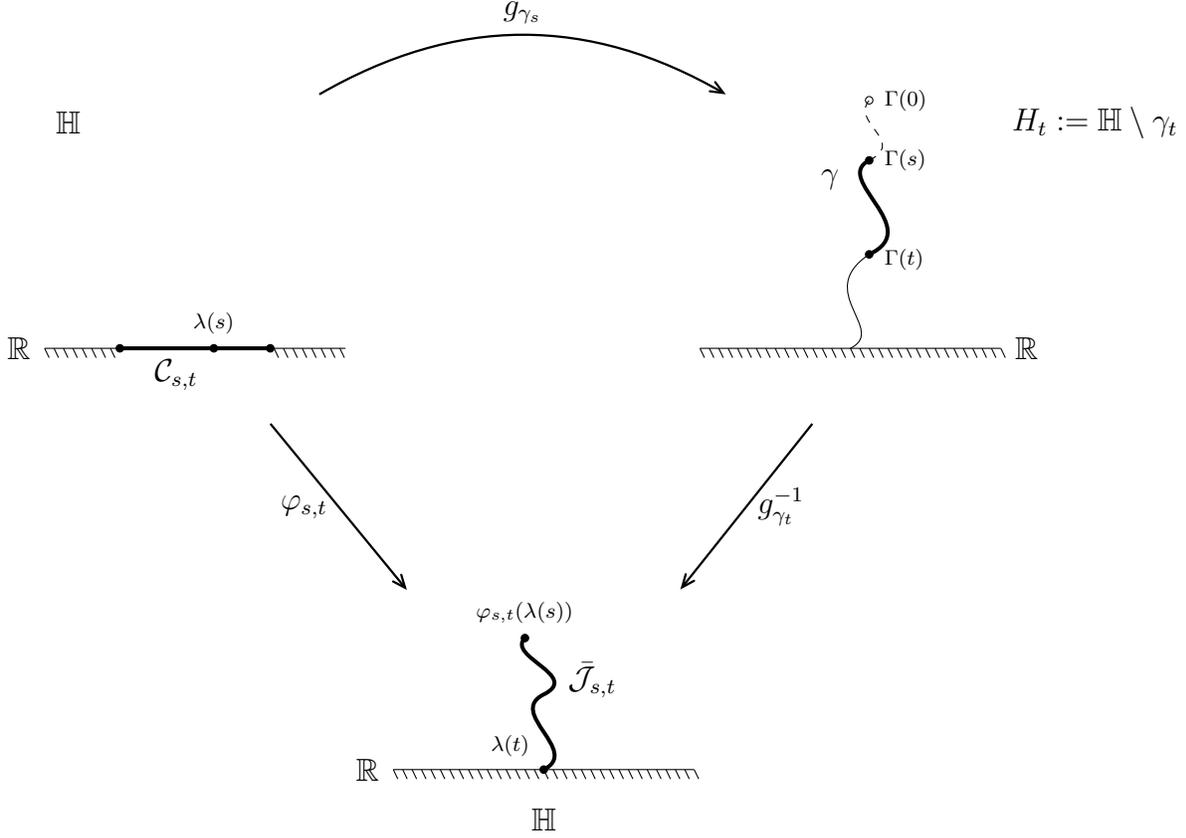
Since $\gamma_t\subset\gamma_s$, the functions $\varphi_{s,t}$ are well-defined conformal mappings of $\uH$ into itself. The set $g^{-1}_{\gamma_t}(\{\Gamma(t)\})$ consists, by Proposition~\ref{Pr_PropertiesSingleSlitMap}\,(ii), of a unique point, which makes $\lambda(t)$ be well and uniquely defined.
Moreover, for $s=t\in[0,T]$, clearly $\varphi_{s,t}$ is the identity map of~$\uH$, while if $0\le s<t\le T$ then, by Theorem~\ref{Th_PreOfSlitIsSlit},  $\varphi_{s,t}$ is a single-slit mapping. More precisely, using Propositions~\ref{Prop_RiemMapThm}, one easily obtains the following statement.
\begin{lemma}\label{LM:phi_st}
For any $s,t\in[0,T]$, $s<t$, the function $\varphi_{s,t}$ is a single-slit mapping with $\uH\setminus\varphi_{s,t}(\uH)=\mathcal J_{s,t}$ satisfying the hydrodynamic condition $\lim_{z\to\infty}\varphi_{s,t}(z)-z=0$. Moreover, $\varphi_{s,t}|_{\uH}$ extends to a conformal mapping $$\varphi_{s,t}^*\colon \ComplexE\setminus\mathcal C_{s,t}\Maponto\ComplexE\setminus\big(\bar{\mathcal J}_{s,t}\cup \bar{\mathcal J}_{s,t}^*\big),$$ where $\bar{\mathcal J}_{s,t}^*$ is the reflection of~$\bar{\mathcal J}_{s,t}$ w.r.t. the real axis. This extension has the Laurent expansion at~$\infty$ of the form
\begin{equation}\label{EQ_Laurent-phi_st}
\varphi_{s,t}^*(z)=z+\sum_{n=1}^{+\infty}c_{n}(s,t)z^{-n},
\end{equation}
with $c_{1}(s,t)=c_1(s)-c_1(t)<0$.
\end{lemma}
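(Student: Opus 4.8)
The plan is to obtain all assertions by applying the results already established for single-slit mappings and for the ``chordal Riemann mapping'' (Propositions~\ref{Prop_RiemMapThm}, \ref{Pr_PropertiesSingleSlitMap} and Theorem~\ref{Th_PreOfSlitIsSlit}) to the composition $\varphi_{s,t}=g_{\gamma_t}^{-1}\circ g_{\gamma_s}$. First I would observe that $\gamma_t=\Gamma([t,T))\subset\Gamma([s,T))=\gamma_s$, so that $H_s=\uH\setminus\gamma_s$ is contained in $H_t=\uH\setminus\gamma_t$, and $\varphi_{s,t}$ is a well-defined conformal self-map of $\uH$ into $\uH$. The key identification is that the complement of its image is
\[
\uH\setminus\varphi_{s,t}(\uH)=g_{\gamma_t}^{-1}\big(H_t\setminus H_s\big)
=g_{\gamma_t}^{-1}\big(\gamma_s\setminus\gamma_t\big)=g_{\gamma_t}^{-1}\big(\Gamma([s,t))\big)=\mathcal J_{s,t}.
\]
By Theorem~\ref{Th_PreOfSlitIsSlit}, since $\Gamma([s,t))$ is (the non-root part of) a slit in $D=g_{\gamma_t}(\uH)$ — namely the slit $\Gamma([s,t])$ with root $\Gamma(t)\in\partial H_t$ — the preimage $g_{\gamma_t}^{-1}(\Gamma([s,t]))=\bar{\mathcal J}_{s,t}$ is a slit in $\uH$ landing at $g_{\gamma_t}^{-1}(\Gamma(t))=\lambda(t)\in\R$; hence $\varphi_{s,t}$ is a single-slit mapping in the sense of the paper, with omitted slit $\mathcal J_{s,t}$ and root $\lambda(t)$.

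Next I would verify the hydrodynamic normalization and realize $\varphi_{s,t}$ as the canonical map $g_{\mathcal J_{s,t}}$ of Proposition~\ref{Prop_RiemMapThm}. Both $g_{\gamma_s}$ and $g_{\gamma_t}$ fix $\infty$ and satisfy $g_{\gamma_s}(z)-z\to0$, $g_{\gamma_t}(z)-z\to0$ as $z\to\infty$; composing, $\varphi_{s,t}(\infty)=\infty$ and a short expansion at $\infty$ — writing $g_{\gamma_s}(z)=z+c_1(s)/z+\cdots$ and inverting $g_{\gamma_t}$, whose expansion is $g_{\gamma_t}(w)=w+c_1(t)/w+\cdots$ so that $g_{\gamma_t}^{-1}(w)=w-c_1(t)/w+\cdots$ — gives
\[
\varphi_{s,t}(z)=g_{\gamma_t}^{-1}\!\big(z+c_1(s)z^{-1}+\cdots\big)=z+\big(c_1(s)-c_1(t)\big)z^{-1}+\cdots,
\]
so in particular $\varphi_{s,t}(z)-z\to0$. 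By the uniqueness part of Proposition~\ref{Prop_RiemMapThm} applied to the slit $\mathcal J_{s,t}$ (with closure $\bar{\mathcal J}_{s,t}$), $\varphi_{s,t}$ coincides with $g_{\mathcal J_{s,t}}$; therefore it extends to a conformal map $\varphi_{s,t}^*$ of $\ComplexE\setminus\mathcal C$ onto $\ComplexE\setminus(\bar{\mathcal J}_{s,t}\cup\bar{\mathcal J}_{s,t}^*)$, where $\mathcal C:=(\varphi_{s,t})^{-1}(\bar{\mathcal J}_{s,t})=(g_{\gamma_s})^{-1}\big(g_{\gamma_t}(\bar{\mathcal J}_{s,t})\big)=(g_{\gamma_s})^{-1}\big(\Gamma([s,t])\big)=\mathcal C_{s,t}$, and $\varphi_{s,t}^*$ has a real Laurent expansion~\eqref{EQ_Laurent-phi_st} with $c_1(s,t)<0$.

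The one remaining point — and I expect this to be the only genuine content beyond bookkeeping — is the identity $c_1(s,t)=c_1(s)-c_1(t)$ together with the strict inequality $c_1(s,t)<0$. The expansion computation above already yields $c_1(s,t)=c_1(s)-c_1(t)$ rigorously (the composition of two Laurent series tangent to the identity at $\infty$ is again tangent to the identity, with first coefficients adding), so the substantive assertion is the sign. Two routes are available: either invoke Step~2 of the proof of Proposition~\ref{Prop_RiemMapThm} directly for the nonempty slit $\mathcal J_{s,t}$ (valid since $s<t$ forces $\Gamma([s,t))\neq\emptyset$, so $\mathcal J_{s,t}\neq\emptyset$ and $\varphi_{s,t}\neq\id_\uH$), giving $c_1(s,t)<0$; or, equivalently, note $h(z):=z-\varphi_{s,t}(z)$ maps $\uH$ into $\uH$ (Maximum Principle applied to $\Im h$, which extends continuously and nonconstantly to $\uHc$), whence its leading coefficient $-c_1(s,t)$ satisfies $-c_1(s,t)=\lim_{y\to+\infty}y\,\Im h(iy)\ge0$, and strictness again follows from nonconstancy as in Step~2. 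I would use the first route for brevity, citing Step~2 of Proposition~\ref{Prop_RiemMapThm}. This completes the proof. \proofbox
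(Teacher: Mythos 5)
Your proposal is correct and follows essentially the same route as the paper's own proof: Theorem~\ref{Th_PreOfSlitIsSlit} for the slit structure of $\mathcal J_{s,t}$, composition of the Laurent expansions at~$\infty$ to get the hydrodynamic normalization and $c_1(s,t)=c_1(s)-c_1(t)$, then Proposition~\ref{Prop_RiemMapThm} applied with $\mathcal J_{s,t}$ in place of~$\gamma$ to obtain the reflected extension and the sign $c_1(s,t)<0$, together with the boundary identification $\mathcal C_{s,t}=\varphi_{s,t}^{-1}(\bar{\mathcal J}_{s,t})$. The only cosmetic difference is that you phrase the last step as identifying $\varphi_{s,t}$ with $g_{\mathcal J_{s,t}}$ via uniqueness, while the paper applies the proposition directly; the paper also dispatches the case $t=T$ separately, which your expansion argument covers uniformly.
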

\begin{proof}
First of all, we may assume that $t<T$, because otherwise $\varphi_{s,t}=g_{\gamma_s}$ and hence the statement of the lemma would follow readily from  Proposition~\ref{Prop_RiemMapThm} applied with~$\Gamma([s,T))$ substituted for $\gamma$.

Now the fact that $\mathcal J_{s,t}$ is a slit in~$\uH$ follows directly from Theorem~\ref{Th_PreOfSlitIsSlit}. Since $g_{\gamma_t}^{-1}$ is a conformal mapping of $H_t:=\uH\setminus\gamma_t$ \emph{onto}~$\uH$, we have $$\varphi_{s,t}(\uH)=g_{\gamma_t}^{-1}(H_s)= g_{\gamma_t}^{-1}\big(H_t\setminus\Gamma([s,t))\big)= \uH\setminus g_{\gamma_t}^{-1}\big(\Gamma([s,t))\big)=\uH\setminus \mathcal J_{s,t}.$$
In particular, $\varphi_{s,t}$ is a single-slit mapping and extends to a continuous map from~$\uHc$ to~$\RS$.

By construction, $g_{\gamma_t}$ and $g_{\gamma_s}$ satisfy the hydrodynamic condition~\eqref{eq_HydroDynCond} and by Proposition~\ref{Prop_RiemMapThm}, these functions extend meromorphically to a neighbourhood of~$\infty$ having there the Laurent expansions of the form~\eqref{eq_FormLaurSerG}. It follows that $\varphi_{s,t}|_\uH$ also admits a meromorphic extension to a neighbourhood of~$\infty$, which has there the Laurent expansion of the form~\eqref{EQ_Laurent-phi_st} with $c_1(s,t)=c_1(s)-c_1(t)$. In particular, $\varphi_{s,t}$  satisfies the hydrodynamic condition and hence one can apply Proposition~\ref{Prop_RiemMapThm} with $\mathcal J_{s,t}$ substituted for~$\gamma$ to see that $c_1(s,t)<0$ and that $\varphi_{s,t}|_\uH$ extends by means of the Schwarz Reflection Principle to a conformal map~$\varphi^*_{s,t}$ of $\ComplexE\setminus \varphi_{s,t}^{-1}(\bar{\mathcal J}_{s,t})$ onto $\ComplexE\setminus\big(\bar{\mathcal J}_{s,t}\cup \bar{\mathcal J}_{s,t}^*\big)$. It remains to notice that the equality $g_{\gamma_s}(z)=g_{\gamma_t}\big(\varphi_{s,t}(z)\big)$ extends by continuity from~$\uH$ to its boundary and hence
$$%
\mathcal C_{s,t}=g_{\gamma_s}^{-1}\big(\Gamma([s,t])\big) = \varphi_{s,t}^{-1}\big(g_{\gamma_t}^{-1}\big(\Gamma([s,t])\big)\big)= \varphi_{s,t}^{-1}\big(\bar{\mathcal J}_{s,t}\big).
$$%
The proof is now complete.
\end{proof}

We will say that a sequence of Jordan arcs $\{\mathcal{C}_n\subset\Complex\}_{n\in\N}$  \emph{shrinks} to a point $p\in\C$ if $\mathcal C_{n+1}\subset\mathcal C_n$ and $\bigcap_{n\in\Natural} \mathcal C_n=\{p\}$. More generally, we will say that $\{\mathcal{C}_n\}_{n\in\N}$ \emph{tends} to a point~$p\in\C$ if  $d_n:=\sup\big\{|z-p|\colon z\in\mathcal C_n\big\}\to0$ as $n\to+\infty$.

\begin{lemma}\label{LM:ArcsShrinks}
For any fixed $t\in(0,T]$ the arc $\bar{\mathcal{J}}_{u,t}$ shrinks to the point $\lambda(t)$ and the segment $\mathcal{C}_{u,t}$ tends to the same point as $u\uparrow t$. Similarly,
for any fixed $s\in[0,T)$ the segment $\mathcal{C}_{s,u}$ shrinks to the point $\lambda(s)$ and the arc $\bar{\mathcal{J}}_{s,u}$ tends to the same point as $u\downarrow s$.
\end{lemma}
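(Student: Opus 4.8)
The four assertions split into two symmetric pairs, and the plan for each pair is the same: first prove the monotonicity of one of the two families together with the identification of its intersection, and then transfer the resulting smallness to the ``dual'' family by the distortion estimates of Lemma~\ref{LM:varphi_extended} applied to the conformal map $\varphi_{s,t}^{*}$ supplied by Lemma~\ref{LM:phi_st}.

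For the first pair, fix $t\in(0,T]$ and let $u\uparrow t$. For $u_1<u_2<t$ one has $\Gamma([u_1,t))\supset\Gamma([u_2,t))$ inside $H_t=\uH\setminus\gamma_t$, so injectivity of $g_{\gamma_t}^{-1}$ on $H_t$ gives $\mathcal J_{u_1,t}\supset\mathcal J_{u_2,t}$ and hence $\bar{\mathcal J}_{u_1,t}\supset\bar{\mathcal J}_{u_2,t}$; moreover each $\bar{\mathcal J}_{u,t}=g_{\gamma_t}^{-1}\big(\Gamma([u,t])\big)$ is a compact subset of $\C$ (a closed subset of the compact set $\uHc$ avoiding $\infty$, since $g_{\gamma_t}(\infty)=\infty$), and equals $\mathcal J_{u,t}\cup\{\lambda(t)\}$ by Proposition~\ref{Pr_PropertiesSingleSlitMap}\,(ii). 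To compute the intersection I would note that $\lambda(t)$ lies in every $\bar{\mathcal J}_{u,t}$, whereas a point $p\in\bigcap_{u<t}\bar{\mathcal J}_{u,t}$ with $p\neq\lambda(t)$ must lie in $\mathcal J_{u,t}\subset\uH$ for all $u<t$ (as $\lambda(t)\in\Real$), whence $g_{\gamma_t}(p)\in\bigcap_{u<t}\Gamma([u,t))=\varnothing$ --- impossible, since $\Gamma$ is injective and $\bigcap_{u<t}[u,t)=\varnothing$. Thus $\bar{\mathcal J}_{u,t}$ shrinks to $\lambda(t)$, and a nested family of compacta with singleton intersection has $\diam\to0$; since $\lambda(t)\in\Real$ this also forces $\diam\big(\bar{\mathcal J}_{u,t}\cup\bar{\mathcal J}_{u,t}^{*}\big)\le 2\diam\bar{\mathcal J}_{u,t}\to0$. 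Then Lemma~\ref{LM:varphi_extended}(ii), applied to $\varphi_{u,t}^{*}\colon\ComplexE\setminus\mathcal C_{u,t}\to\ComplexE\setminus(\bar{\mathcal J}_{u,t}\cup\bar{\mathcal J}_{u,t}^{*})$ with $w_0:=\lambda(t)$, yields $\mathcal C_{u,t}\subset\{z:|z-\lambda(t)|\le 2\diam(\bar{\mathcal J}_{u,t}\cup\bar{\mathcal J}_{u,t}^{*})\}$, i.e. $\mathcal C_{u,t}$ tends to $\lambda(t)$ as $u\uparrow t$.

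For the second pair, fix $s\in[0,T)$ and let $u\downarrow s$, so eventually $u<T$. Now $\Gamma([s,u])$ is a subarc of $\bar\gamma_s=\Gamma([s,T])$ containing its tip $\Gamma(s)$, and by Proposition~\ref{Pr_PropertiesSingleSlitMap}\,(iii)--(iv) the continuous boundary extension of $g_{\gamma_s}$ carries each of $[\alpha,\lambda(s)]$ and $[\lambda(s),\beta]$ homeomorphically onto $\bar\gamma_s$ with $\lambda(s)\mapsto\Gamma(s)$; hence $\mathcal C_{s,u}=g_{\gamma_s}^{-1}\big(\Gamma([s,u])\big)$ is a closed subsegment of $[\alpha,\beta]$ containing $\lambda(s)$, these segments decrease as $u\downarrow s$, and (preimages commuting with intersections) $\bigcap_{u>s}\mathcal C_{s,u}=g_{\gamma_s}^{-1}\big(\bigcap_{u>s}\Gamma([s,u])\big)=g_{\gamma_s}^{-1}\big(\{\Gamma(s)\}\big)=\{\lambda(s)\}$, the last equality by Proposition~\ref{Pr_PropertiesSingleSlitMap}\,(ii). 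So $\mathcal C_{s,u}$ shrinks to $\lambda(s)$ and $\diam\mathcal C_{s,u}\to0$. Finally, Lemma~\ref{LM:varphi_extended}(iii) applied to $\varphi_{s,u}^{*}\colon\ComplexE\setminus\mathcal C_{s,u}\to\ComplexE\setminus(\bar{\mathcal J}_{s,u}\cup\bar{\mathcal J}_{s,u}^{*})$ with $z_0:=\lambda(s)\in\mathcal C_{s,u}$ gives $\bar{\mathcal J}_{s,u}\subset\{w:|w-\lambda(s)|\le 2\diam\mathcal C_{s,u}\}$, so $\bar{\mathcal J}_{s,u}$ tends to $\lambda(s)$.

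I expect the only genuinely delicate point to be the bookkeeping about which sets lie in $\uH$ and which on $\Real$, together with making rigorous that $\bar{\mathcal J}_{u,t}=g_{\gamma_t}^{-1}(\Gamma([u,t]))$ and $\mathcal C_{s,u}=g_{\gamma_s}^{-1}(\Gamma([s,u]))$ are honest preimages under the continuous boundary extensions, so that the nested-intersection computations are legitimate and the sets are compact. Once the relevant ``dual'' family has been shown to shrink, the distortion transfer via Lemma~\ref{LM:varphi_extended} is routine.
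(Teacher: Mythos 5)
Your argument is correct and follows essentially the same route as the paper: one family is shown to shrink directly, as the preimage of the nested arcs $\Gamma([u,t])$ (resp.\ $\Gamma([s,u])$) under the fixed boundary extension of $g_{\gamma_t}$ (resp.\ $g_{\gamma_s}$) together with the single-point preimage property from Proposition~\ref{Pr_PropertiesSingleSlitMap}\,(ii), and the smallness is then transferred to the dual family via Lemma~\ref{LM:varphi_extended}\,(ii)/(iii) applied to $\varphi^{*}_{u,t}$, resp.\ $\varphi^{*}_{s,u}$. You merely spell out details (nestedness, compactness, the singleton intersection, and the bound $\diam(\bar{\mathcal J}\cup\bar{\mathcal J}^{*})\le 2\diam\bar{\mathcal J}$) that the paper leaves implicit.
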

\begin{proof}
Fix $t\in(0,T]$. Then $\Gamma([u,t])$ shrinks to $\Gamma(t)$ as $u\uparrow t$. Since by Proposition~\ref{Pr_PropertiesSingleSlitMap}, $g_{\gamma_t}^{-1}(\{\Gamma(t)\})$ consists of a unique point, which we denote by $\lambda(t)$, it follows that $\bar{\mathcal J}_{u,t}$ shrinks to~$\lambda(t)$ as $u\uparrow t$. The same holds for the arcs $\mathcal I_{u,t}:=\bar{\mathcal J}_{u,t}\cup \bar{\mathcal J}_{u,t}^*$. Since the sets $\mathcal I_{u,t}$ are compact, it follows that $\diam \mathcal I_{u,t}\to0$ as $u\uparrow t$. Taking into account Lemma~\ref{LM:phi_st},  assertion~(ii) of Lemma~\ref{LM:varphi_extended}, applied with $\varphi:=\varphi^*_{s,t}$ and $w_0:=\lambda(t)$, implies that $\mathcal{C}_{u,t}$ tends to~$\lambda(t)$ as $u\uparrow t$.

Now fix $s\in[0,T)$. Then $\Gamma([s,u])$ shrinks to $\Gamma(s)$ as $u\downarrow s$. Hence, arguing essentially in the same way as above, we see that $\mathcal C_{s,u}$ shrinks to~$\lambda(s)$ and that $\bar{\mathcal J}_{u,t}$ tends to~$\lambda(s)$ as~$u\downarrow s$. The proof is finished.
\end{proof}

\begin{corollary}\label{CR_lambda-continuous}
The function $[0,T]\ni t\mapsto\lambda(t)$ is continuous.
\end{corollary}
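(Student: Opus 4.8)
The plan is to read off the continuity of $\lambda$ directly from Lemma~\ref{LM:ArcsShrinks}. The key observation is that $\lambda(u)$ always belongs to one of the sets shown there to tend (or shrink) to $\lambda(t)$, so both one-sided limits are immediate; essentially all the geometric work — in particular the use of assertion~(ii) of Lemma~\ref{LM:varphi_extended} to control the segments $\mathcal C_{u,t}$ — has already been carried out inside Lemma~\ref{LM:ArcsShrinks}, and here one simply harvests its conclusions.

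For \emph{left}-continuity at a point $t\in(0,T]$: given $u\in[0,t)$, the tip $\Gamma(u)$ of $\gamma_u$ lies on the arc $\Gamma([u,t])$, and by Proposition~\ref{Pr_PropertiesSingleSlitMap}\,(ii) its preimage $g_{\gamma_u}^{-1}(\Gamma(u))$ is precisely $\{\lambda(u)\}$; hence $\lambda(u)\in g_{\gamma_u}^{-1}(\Gamma([u,t]))=\mathcal C_{u,t}$. By Lemma~\ref{LM:ArcsShrinks} the segments $\mathcal C_{u,t}$ tend to $\lambda(t)$ as $u\uparrow t$, i.e.\ $\sup\{|z-\lambda(t)|\colon z\in\mathcal C_{u,t}\}\to0$, and therefore $|\lambda(u)-\lambda(t)|\to0$ as $u\uparrow t$.

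For \emph{right}-continuity at a point $s\in[0,T)$: given $u\in(s,T]$, the defining relation for $\bar{\mathcal J}_{s,u}$ reads $\bar{\mathcal J}_{s,u}=\mathcal J_{s,u}\cup\{\lambda(u)\}$, so $\lambda(u)\in\bar{\mathcal J}_{s,u}$; since by Lemma~\ref{LM:ArcsShrinks} the arc $\bar{\mathcal J}_{s,u}$ tends to $\lambda(s)$ as $u\downarrow s$, we get $|\lambda(u)-\lambda(s)|\to0$ as $u\downarrow s$. Combining the two parts — only the first is needed at $t=T$ and only the second at $t=0$ — yields continuity of $t\mapsto\lambda(t)$ on all of $[0,T]$. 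There is no serious obstacle in the corollary itself: whatever delicacy there is sits in Lemma~\ref{LM:ArcsShrinks}, whose proof of the claim about $\mathcal C_{u,t}$ relies on Lemma~\ref{LM:phi_st} (which exhibits $\varphi_{u,t}^*$ as a hydrodynamically normalized conformal map defined off a compact real segment) together with Lemma~\ref{LM:varphi_extended}\,(ii).
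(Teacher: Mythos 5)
Your argument is correct and is essentially the paper's own proof: the paper likewise observes that $\lambda(u)\in\mathcal C_{u,t}$ and $\lambda(u)\in\bar{\mathcal J}_{s,u}$ by construction and then invokes Lemma~\ref{LM:ArcsShrinks} to conclude that both one-sided limits give continuity. Your extra justification via Proposition~\ref{Pr_PropertiesSingleSlitMap}\,(ii) and the identification of where the real work (Lemma~\ref{LM:varphi_extended}\,(ii) inside Lemma~\ref{LM:ArcsShrinks}) lies are accurate but do not change the route.
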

\begin{proof}
Notice that, by construction, for any $s,t\in[0,T]$ such that $s<t$ we have $\lambda(s)\in\mathcal C_{s,t}$ and $\lambda(t)\in \bar{\mathcal J}_{s,t}$. Thus the continuity of~$t\mapsto\lambda(t)$ follows from the fact that by Lemma~\ref{LM:ArcsShrinks} both arcs, $\mathcal C_{s,t}$ and $\mathcal J_{s,t}$, tend to the same point as $t-s\to+0$ when one of the parameters, either $s$ or $t$, is fixed.
\end{proof}

\begin{lemma}\label{LM_c_1(t)}
The function $[0,T]\ni t\mapsto c_1(t)$ is continuous and strictly increasing.
\end{lemma}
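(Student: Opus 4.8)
The plan is to establish continuity and strict monotonicity of $t\mapsto c_1(t)$ separately, both as consequences of Lemma~\ref{LM:phi_st} together with the geometric control on the slits $\bar{\mathcal J}_{s,t}$ provided by Lemma~\ref{LM:ArcsShrinks}. Recall from Lemma~\ref{LM:phi_st} that for $s<t$ one has $c_1(s,t)=c_1(s)-c_1(t)$ and $c_1(s,t)<0$; the latter already yields that $c_1$ is strictly increasing, since $c_1(s)-c_1(t)<0$ whenever $s<t$. So the strict monotonicity is essentially free, and the real content is continuity.

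For continuity I would argue as follows. Fix $t_0\in[0,T]$ and let $s<t_0$ (the case $s>t_0$ being symmetric). Apply assertion~(i) of Lemma~\ref{LM:varphi_extended} with $\varphi:=\varphi^*_{s,t_0}$, $K_1:=\mathcal C_{s,t_0}$, $K_2:=\bar{\mathcal J}_{s,t_0}$: then $|c_1(s,t_0)|\le\big(\diam\bar{\mathcal J}_{s,t_0}\big)^2$. By Lemma~\ref{LM:ArcsShrinks}, as $s\uparrow t_0$ the arc $\bar{\mathcal J}_{s,t_0}$ shrinks to $\lambda(t_0)$, hence $\diam\bar{\mathcal J}_{s,t_0}\to0$, so $c_1(s,t_0)\to0$, i.e.\ $c_1(s)\to c_1(t_0)$ as $s\uparrow t_0$. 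For the right-hand limit, fix $s_0\in[0,T)$ and take $t>s_0$; now apply Lemma~\ref{LM:varphi_extended}(i) to $\varphi^*_{s_0,t}$ to get $|c_1(s_0,t)|\le\big(\diam\bar{\mathcal J}_{s_0,t}\big)^2$, and invoke the second half of Lemma~\ref{LM:ArcsShrinks}: as $t\downarrow s_0$ the arc $\bar{\mathcal J}_{s_0,t}$ tends to $\lambda(s_0)$, so its diameter tends to $0$, giving $c_1(t)\to c_1(s_0)$ as $t\downarrow s_0$. Combining the one-sided limits at interior points and the appropriate one-sided limit at the endpoints $0$ and $T$ yields continuity on all of $[0,T]$.

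The only delicate point is making sure the estimate $\diam\bar{\mathcal J}_{s,t}\to0$ is legitimately available — but this is exactly what Lemma~\ref{LM:ArcsShrinks} gives, once one notes (as in the proof of that lemma) that a \emph{shrinking} or \emph{tending} family of \emph{compact} arcs has diameter going to $0$. So I expect no genuine obstacle here; the lemma is really a bookkeeping corollary of the two preceding lemmas, with Lemma~\ref{LM:varphi_extended}(i) converting the geometric smallness of the omitted slit into smallness of the coefficient increment $c_1(s)-c_1(t)$.
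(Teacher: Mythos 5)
Your proof is correct and follows essentially the same route as the paper: strict monotonicity from $c_1(s)-c_1(t)=c_1(s,t)<0$ (Lemma~\ref{LM:phi_st}) and continuity by combining Lemma~\ref{LM:varphi_extended}\,(i) with the diameter decay supplied by Lemma~\ref{LM:ArcsShrinks}. One small bookkeeping point: the omitted compact set of $\varphi^*_{s,t}$ is $\bar{\mathcal J}_{s,t}\cup\bar{\mathcal J}_{s,t}^*$ rather than $\bar{\mathcal J}_{s,t}$ alone (or one may simply take $K_1=\mathcal C_{s,t}$), but its diameter still tends to zero as noted in the proof of Lemma~\ref{LM:ArcsShrinks}, so your estimate is unaffected.
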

\begin{proof}
The proof of the previous lemma shows that using assertion~(i) of Lemma~\ref{LM:varphi_extended} with $\varphi:=\varphi_{s,t}^*$ we may conclude that $c_1(s,u)\to0$ as $u\downarrow s$ for any fixed $s\in[0,T)$ and that $c_1(u,t)\to0$ as $u\uparrow t$ for any fixed~$t\in(0,T]$.
It remains to notice that by Lemma~\ref{LM:phi_st}, for any $s,t\in[0,T]$, $s<t$, we have $c_1(s)-c_1(t)=c_1(s,t)<0$.
\end{proof}

\begin{proof}[\textbf{Proof of Proposition~\ref{PR_standardParam}}]
Fix \emph{any} parametrization $\Gamma:[0,T]\to\uHc$ of the slit~$\gamma$. Then the proposition follows easily from Lemma~\ref{LM_c_1(t)} and the fact that~$c_1(T)=0$.
Indeed, consider another parametrization $\Gamma_0:[0,T_0]\to\uHc$ of the slit~$\gamma$. By definition it is standard if and only if $\Gamma_0\big(c_1(t)+T_0\big)=\Gamma(t)$ for all $t\in[0,T]$. Thus the unique standard parametrization is given by
$$
\Gamma_0:[0,T_0]\ni t\mapsto \big(\Gamma\circ\tau\big)(t-T_0),
$$
where $\tau$ is the inverse of $[0,T]\ni t\mapsto c_1(t)$ and $T_0:=-c_1(0)$.
\end{proof}

\subsection{Proof of Theorem~\protect{\ref{TH_Kufarev_etal}}}\label{SS_proof}
Let  $\gamma$ be a slit in~$\uH$ landing at a finite point on~$\Real$ and let ${\Gamma:[0,T]\to\uHc}$ be its unique standard parametrization, which exists due to Proposition~\ref{PR_standardParam}. To simplify the notation introduced in Section~\ref{SS_standParam} and to emphasize that now we work with the \emph{standard parametrization} of the slit, we will write $g_t$ instead of $g_{\gamma_t}$. The result of Kufarev \textsl{et al} (Theorem~\ref{TH_Kufarev_etal}) can be formulated in the following form.

\begin{theorem}\label{TH_Kuf-et-al_ODE}
There exists a unique continuous function $\lambda:[0,T]\to\Real$ such that for each $s\in[0,T)$ and each $z\in\uH$ the function
$t\in[s,T]\ni t\mapsto w_{z,s}(t):=\varphi_{s,t}(z)$ is the unique solution to the following initial value problem:
\begin{equation}\label{EQ_ini-chordal-L-ODE}
\frac{dw(t)}{dt}=\frac1{\lambda(t)-w(t)},\quad t\in[s,T];~\quad w(s)=z.
\end{equation}
\end{theorem}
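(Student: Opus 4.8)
The strategy is to establish the chordal Loewner ODE~\eqref{EQ_ini-chordal-L-ODE} by differentiating the semigroup-type identity $\varphi_{s,t}=\varphi_{u,t}\circ\varphi_{s,u}$ and extracting the infinitesimal generator at the diagonal. The key analytic input is Proposition~\ref{pr_SchwarzIntegralForumula}: since by Lemma~\ref{LM:phi_st} each $\varphi_{s,t}^*$ is a conformal map of $\ComplexE\setminus\mathcal C_{s,t}$ with real coefficients in its expansion~\eqref{EQ_Laurent-phi_st}, the function $z\mapsto \varphi_{s,t}(z)-z$ is holomorphic in~$\uH$, continuous up to~$\RealE$, vanishes at~$\infty$, takes real values off the compact real segment $\mathcal C_{s,t}$, and has negative imaginary part inside~$\uH$ (as shown in Step~2 of the proof of Proposition~\ref{Prop_RiemMapThm}). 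Hence Proposition~\ref{pr_SchwarzIntegralForumula} applies to $-(\varphi_{s,t}(z)-z)$ and gives the representation
\begin{equation*}
\varphi_{s,t}(z) - z \;=\; \frac1\pi\int_{\mathcal C_{s,t}}\frac{\Im\{\varphi_{s,t}(\xi)-\xi\}}{\xi-z}\,d\xi,
\end{equation*}
with the density supported on $\mathcal C_{s,t}$. Comparing the $z^{-1}$-coefficients shows that $\tfrac1\pi\int_{\mathcal C_{s,t}}\Im\{\varphi_{s,t}(\xi)-\xi\}\,d\xi = c_1(s,t) = c_1(s)-c_1(t) = s-t$ under the standard parametrization (here $c_1(t)=t-T$). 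Thus the total mass of the (non-positive) density is exactly $s-t$.

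Next I would fix $s$ and $z\in\uH$ and compute the right derivative of $t\mapsto\varphi_{s,t}(z)$. Write $\varphi_{s,t+h}=\varphi_{t,t+h}\circ\varphi_{s,t}$; setting $w:=\varphi_{s,t}(z)\in\uH$ and $u:=t$, $u':=t+h$, the increment is $\varphi_{s,t+h}(z)-\varphi_{s,t}(z) = \varphi_{t,t+h}(w)-w$, which by the displayed representation equals $\tfrac1\pi\int_{\mathcal C_{t,t+h}}\frac{\Im\{\varphi_{t,t+h}(\xi)-\xi\}}{\xi-w}\,d\xi$. By Lemma~\ref{LM:ArcsShrinks} the segment $\mathcal C_{t,t+h}$ shrinks to the point $\lambda(t)$ as $h\downarrow0$ (and by Corollary~\ref{CR_lambda-continuous} $\lambda$ is continuous), while the total mass of the density is $t-(t+h)=-h$. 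Since $w$ stays away from the real axis, $\xi\mapsto(\xi-w)^{-1}$ is bounded and uniformly continuous near $\lambda(t)$, so
\begin{equation*}
\frac{\varphi_{s,t+h}(z)-\varphi_{s,t}(z)}{h} \;=\; \frac{1}{\pi h}\int_{\mathcal C_{t,t+h}}\frac{\Im\{\cdots\}}{\xi-w}\,d\xi \;\longrightarrow\; \frac{1}{\lambda(t)-w} \;=\; \frac{1}{\lambda(t)-\varphi_{s,t}(z)}
\end{equation*}
as $h\downarrow0$: the density integrates to $-h$ and concentrates at $\lambda(t)$, so the integral is $-h\cdot\big((\lambda(t)-w)^{-1}+o(1)\big)$. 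The left derivative is handled symmetrically using $\varphi_{s,t}=\varphi_{t-h,t}\circ\varphi_{s,t-h}$ together with the second half of Lemma~\ref{LM:ArcsShrinks}; one also needs joint continuity of $(s,t)\mapsto\varphi_{s,t}(z)$, which follows from the representation and the shrinking/tending statements. This yields that $w_{z,s}(t):=\varphi_{s,t}(z)$ solves the ODE~\eqref{EQ_ini-chordal-L-ODE} with the initial condition $\varphi_{s,s}=\id$. Uniqueness of the solution is immediate from the local Lipschitz continuity of the right-hand side $(t,w)\mapsto(\lambda(t)-w)^{-1}$ in $w$ on $\uH$ (where $\Im w$ is bounded below along any solution staying in $\uH$), via the standard Picard–Lindelöf argument.

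**The main obstacle.** The delicate point is the uniform control needed to pass to the limit in the integral: one must ensure that, as $h\to0$, essentially \emph{all} the mass of the density $\Im\{\varphi_{t,t+h}(\xi)-\xi\}$ really does concentrate near $\lambda(t)$, i.e. that there is no ``leakage'' — but this is exactly the content of Lemma~\ref{LM:ArcsShrinks}, since the density is supported \emph{on} $\mathcal C_{t,t+h}$ and that set shrinks to $\{\lambda(t)\}$. The remaining care is to make the one-sided estimates at $t=s$ and at $t=T$ match up and to verify joint continuity of $t\mapsto\varphi_{s,t}(z)$ so that the computed right/left derivatives are genuinely derivatives of a $C^1$ function; none of this requires more than the representation formula above combined with the already-established continuity of $\lambda$ and $c_1$.
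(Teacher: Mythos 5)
Your strategy is essentially the paper's own: the representation and mass identity you derive from Proposition~\ref{pr_SchwarzIntegralForumula} are exactly Lemma~\ref{LM_varphi_st}\,(ii)--(iii), and the paper likewise computes the two one-sided derivatives of $t\mapsto\varphi_{s,t}(z)$ from the semigroup identity, the shrinking statements of Lemma~\ref{LM:ArcsShrinks}, the continuity $\varphi_{s,u}(z)\to\varphi_{s,t}(z)$ as $u\uparrow t$ (Lemma~\ref{LM:continuity-in-s}), and settles uniqueness of the solution by Picard--Lindel\"of. The one concrete defect is a sign slip that makes your intermediate step inconsistent with your (correct) conclusion: Step~2 of the proof of Proposition~\ref{Prop_RiemMapThm} shows that $\varphi_{s,t}(z)-z$ maps $\uH$ \emph{into} $\uH$, so its imaginary part is \emph{positive} there, and on $\mathcal C_{s,t}$ the boundary density $\Im\{\varphi_{s,t}(\xi)-\xi\}=\Im\{\varphi_{s,t}(\xi)\}\ge0$ is the height of a point of the slit $\bar{\mathcal J}_{s,t}$. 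Comparing $z^{-1}$-coefficients via $1/(\xi-z)=-z^{-1}-\xi z^{-2}-\cdots$ gives $c_1(s,t)=-\tfrac1\pi\int_{\mathcal C_{s,t}}\Im\{\varphi_{s,t}(\xi)\}\,d\xi$, hence the total mass equals $t-s\ge0$, not $s-t$; in particular the density over $\mathcal C_{t,t+h}$ integrates to $+h$, and with your stated mass $-h$ the displayed asymptotics would produce the limit $-1/\bigl(\lambda(t)-w\bigr)$, contradicting the ODE you assert. With the sign corrected, the computation yields exactly $1/\bigl(\lambda(t)-w\bigr)$ and the argument coincides with the paper's. Two minor additions: your claim that the continuity in $u$ needed for the left derivative ``follows from the representation'' is indeed valid and even a bit more economical than the paper's route through Lemma~\ref{LM:varphi_extended}\,(iv), since $\Im\varphi_{s,u}(z)\ge\Im z$ gives $\bigl|\varphi_{u,t}(\zeta)-\zeta\bigr|\le (t-u)/\Im z$ for $\zeta=\varphi_{s,u}(z)$; and you should add one line for the uniqueness of $\lambda$ itself (asserted in the statement), which follows because any solution determines $\lambda(t)=w(t)+1/w'(t)$.
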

See Remark~\ref{RM_ths_equiv} concerning the equivalence of Theorem~\ref{TH_Kufarev_etal} and Theorem~\ref{TH_Kuf-et-al_ODE}. In the proof of the latter we make use of the following two lemmas.
\begin{lemma}\label{LM_varphi_st}
Let $0\le s\le u\le t\le T$. The following statements hold:
\begin{itemize}
\item[(i)] $\varphi_{s,t}=\varphi_{u,t}\circ\varphi_{s,u}\vphantom{\displaystyle\int_0}$;
\item[(ii)] $\displaystyle \varphi_{s,t}(\zeta)=\zeta+\frac{1}{\pi}\int_{\mathcal{C}_{s,t}\!\!\!} \frac{\Im\{\varphi_{s,t}(\xi)\}}{\xi - \zeta}\;d\xi$\hskip1.5em for all \hskip.5em$\zeta\in\uH$;
\item[(iii)] $\displaystyle \samewd{c}{\varphi_{s,t}(\zeta)}{t - s} = \frac{1}{\pi}\int_{\mathcal{C}_{s,t}}\!\!\!\Im\big\lbrace\varphi_{s,t}(\xi)\big\rbrace\; d\xi$.
\end{itemize}
\end{lemma}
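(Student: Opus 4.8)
The plan is to handle the three assertions in order; (ii) is the only one with real content, and (i) and (iii) follow formally from it together with the material already established. For (i) I would just unwind the definition $\varphi_{s,t}=g_t^{-1}\circ g_s$, where we abbreviate $g_t:=g_{\gamma_t}$ as in Section~\ref{SS_proof}. Since $\gamma_t\subset\gamma_u\subset\gamma_s$ we have $g_s(\uH)=\uH\setminus\gamma_s\subset\uH\setminus\gamma_u=g_u(\uH)$, so that $g_u\circ g_u^{-1}$ acts as the identity on $g_s(\uH)$; hence, for every $z\in\uH$,
$$
\varphi_{u,t}\big(\varphi_{s,u}(z)\big)=g_t^{-1}\!\big(g_u\big(g_u^{-1}(g_s(z))\big)\big)=g_t^{-1}\big(g_s(z)\big)=\varphi_{s,t}(z).
$$
No obstacle is expected here.

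For (ii) the idea is to apply the Schwarz Integral Formula, Proposition~\ref{pr_SchwarzIntegralForumula}, to $f:=\varphi_{s,t}-\id$. One may assume $s<t$, and --- exactly as in the proof of Lemma~\ref{LM:phi_st} --- that $t<T$, the case $t=T$ reducing to $\varphi_{s,T}=g_{\gamma_s}$. I would first verify the hypotheses of Proposition~\ref{pr_SchwarzIntegralForumula}. By Lemma~\ref{LM:phi_st} and Proposition~\ref{Pr_PropertiesSingleSlitMap}, $\varphi_{s,t}$ is a single-slit mapping whose omitted slit $\mathcal J_{s,t}$ is bounded and lands at the finite point $\lambda(t)$; hence $\varphi_{s,t}$ extends continuously to $\uHc$ with finite values on $\R$, and its hydrodynamic normalization (equivalently, the Laurent expansion~\eqref{EQ_Laurent-phi_st}) gives $f(\infty)=0$, so $f$ is holomorphic in $\uH$, continuous on $\uHc$, and vanishes at $\infty$. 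The integrability condition~\eqref{pr_hp_IntAbsConv} is then immediate once one notes, again by Lemma~\ref{LM:phi_st}, that $\varphi_{s,t}^{*}$ maps $\RealE\setminus\mathcal C_{s,t}$ onto $\RealE\setminus\{\lambda(t)\}$, so $\Im f=\Im\varphi_{s,t}$ vanishes on $\R\setminus\mathcal C_{s,t}$, while on the compact segment $\mathcal C_{s,t}$ it is bounded (indeed nonnegative, its values lying in $\bar{\mathcal J}_{s,t}\subset\uHc$). Proposition~\ref{pr_SchwarzIntegralForumula} then yields $f(\zeta)=\tfrac1\pi\int_\R\frac{\Im f(\xi)}{\xi-\zeta}\,d\xi$, and restricting the integral to $\mathcal C_{s,t}$ produces (ii). The only step calling for genuine care is this verification of hypotheses --- in particular that the boundary extension of $\varphi_{s,t}$ takes finite values on $\R$ and that $\Im\varphi_{s,t}$ is supported on the compact segment $\mathcal C_{s,t}$ --- which is where the structural results of the previous subsection (Theorem~\ref{Th_PreOfSlitIsSlit}, Lemma~\ref{LM:phi_st}, Proposition~\ref{Pr_PropertiesSingleSlitMap}) enter.

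For (iii) I would let $\zeta\to\infty$ in the identity of (ii). Since $\mathcal C_{s,t}$ is compact, $\zeta/(\xi-\zeta)\to-1$ uniformly for $\xi\in\mathcal C_{s,t}$ as $\zeta\to\infty$; multiplying that identity by $\zeta$ and passing to the limit therefore gives $\zeta\big(\varphi_{s,t}(\zeta)-\zeta\big)\to-\tfrac1\pi\int_{\mathcal C_{s,t}}\Im\{\varphi_{s,t}(\xi)\}\,d\xi$, whereas the Laurent expansion~\eqref{EQ_Laurent-phi_st} gives $\zeta\big(\varphi_{s,t}(\zeta)-\zeta\big)\to c_1(s,t)$. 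Hence $-c_1(s,t)=\tfrac1\pi\int_{\mathcal C_{s,t}}\Im\{\varphi_{s,t}(\xi)\}\,d\xi$. Finally, $c_1(s,t)=c_1(s)-c_1(t)$ by Lemma~\ref{LM:phi_st}, and since $\Gamma$ is the standard parametrization one has $c_1(\tau)=\tau-T$ for all $\tau\in[0,T]$; therefore $-c_1(s,t)=t-s$, which is exactly (iii). No further difficulty is anticipated at this stage.
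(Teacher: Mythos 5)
Your proposal is correct and follows essentially the same route as the paper: (i) by unwinding the definition, (ii) by applying the Schwarz Integral Formula (Proposition~\ref{pr_SchwarzIntegralForumula}) to $\varphi_{s,t}-\mathrm{id}$ after checking via Lemma~\ref{LM:phi_st} and Proposition~\ref{Pr_PropertiesSingleSlitMap} that $\Im\varphi_{s,t}$ vanishes on $\R\setminus\mathcal C_{s,t}$, and (iii) by comparing the $\zeta\to\infty$ behaviour of (ii) with the coefficient $c_1(s,t)=s-t$ coming from the standard parametrization. Your verification of the hypotheses of the Schwarz formula is in fact a bit more explicit than the paper's, but the substance is identical.
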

\begin{proof}
Assertion~(i) follows immediately from the definition of the functions $\varphi_{s,t}$. To prove~(ii) we recall that by Lemma~\ref{LM:phi_st}, $\varphi_{s,t}$ is a single-slit map with the hydrodynamic normalization. It follows that $f(\zeta):=\varphi_{s,t}(\zeta)-\zeta$, $\zeta\in\uH$, extends to a continuous map from~$\uHc$ into~$\Complex$. Further, by Proposition~\ref{Pr_PropertiesSingleSlitMap}, $\Im\big\lbrace f(\xi)\big\rbrace=\Im\big\lbrace\varphi_{s,t}(\xi)\big\rbrace=0$ for all~$\xi\in\Real\setminus\{\mathcal C_{s,t}\}$. Thus we may apply the Schwarz Integral Formula in the upper half-plane   (Proposition~\ref{pr_SchwarzIntegralForumula}) to $f$, which immediately yields~(ii).

Since we have chosen the standard parametrization of the slit~$\gamma$, by Lemma~\ref{LM:phi_st} we get $c_1(s,t)=s-t$ for any $s\ge0$ and any $t\ge s$. Therefore, substituting $\zeta:=iy$, $y>0$, in~(ii), multiplying both sides by~$-iy$, and passing to the limit as~$y\to+\infty$, one obtains~(iii). The proof is now complete.
\end{proof}
\begin{lemma}\label{LM:continuity-in-s}
For any $z\in\uH$ and any $s,t\in[0,T]$ with $s<t$,  $\varphi_{s,u}(z)\to \varphi_{s,t}(z)$ as $u\uparrow t$.
\end{lemma}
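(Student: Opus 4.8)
The plan is to express $\varphi_{s,u}$ as the inverse of a map whose omitted slit collapses to a single point as $u\uparrow t$, and then to read off the conclusion from the distortion estimate in Lemma~\ref{LM:varphi_extended}(iv). Fix $z\in\uH$ and $s<t$, and set $p:=\varphi_{s,t}(z)\in\uH$. For every $u$ with $s\le u\le t$ the semigroup identity of Lemma~\ref{LM_varphi_st}(i) gives $p=\varphi_{u,t}\big(\varphi_{s,u}(z)\big)$; in particular $p$ lies in the range $\varphi_{u,t}(\uH)=\uH\setminus\mathcal J_{u,t}$, and $\varphi_{s,u}(z)=\varphi_{u,t}^{-1}(p)$. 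Thus it suffices to prove that $\varphi_{u,t}^{-1}(p)\to p$ as $u\uparrow t$.

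For the geometric input I would invoke Lemma~\ref{LM:ArcsShrinks}: the compact sets $\mathcal I_{u,t}:=\bar{\mathcal J}_{u,t}\cup\bar{\mathcal J}_{u,t}^{*}$ are nested and shrink to $\{\lambda(t)\}$ as $u\uparrow t$, hence $\diam\mathcal I_{u,t}\to0$. Since $\lambda(t)\in\Real$ while $p\in\uH$, and $\lambda(t)\in\mathcal I_{u,t}$, we have $\dist(p,\mathcal I_{u,t})\ge\dist(p,\lambda(t))-\diam\mathcal I_{u,t}\ge\Im p-\diam\mathcal I_{u,t}$, which tends to $\Im p>0$. Consequently there is $u_{0}\in[s,t)$ such that $\dist(p,\mathcal I_{u,t})>\diam\mathcal I_{u,t}$ for every $u\in(u_{0},t)$.

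For such $u$ I would apply Lemma~\ref{LM:varphi_extended}(iv) to the conformal map $\varphi:=\varphi^{*}_{u,t}\colon\ComplexE\setminus\mathcal C_{u,t}\Maponto\ComplexE\setminus\mathcal I_{u,t}$, which by Lemma~\ref{LM:phi_st} has a Laurent expansion at $\infty$ of the required form, taking $K_{1}:=\mathcal C_{u,t}$, $K_{2}:=\mathcal I_{u,t}$, $z_{1}:=\varphi_{s,u}(z)\in\uH\setminus\mathcal C_{u,t}$, and $z_{2}:=\varphi(z_{1})=\varphi_{u,t}\big(\varphi_{s,u}(z)\big)=\varphi_{s,t}(z)=p$. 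The hypothesis $\dist(z_{2},K_{2})>\diam K_{2}$ holds by the previous paragraph, so part~(iv) (with $j=2$) yields $\big|\varphi_{s,u}(z)-\varphi_{s,t}(z)\big|=|z_{1}-z_{2}|<3\diam\mathcal I_{u,t}$, and the right-hand side tends to $0$ as $u\uparrow t$, which is the assertion. The case $t=T$ is covered by the same argument: there $\varphi_{u,T}=g_{\gamma_{u}}$, the arc $\bar{\mathcal J}_{u,T}=\Gamma([u,T])$ collapses to $\xi_{0}=\lambda(T)$, and the extension $g^{*}_{\gamma_{u}}$ furnished by Proposition~\ref{Prop_RiemMapThm}(i) plays the role of $\varphi^{*}_{u,t}$. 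The only step requiring genuine care is checking the distortion hypothesis $\dist(p,\mathcal I_{u,t})>\diam\mathcal I_{u,t}$ needed in Lemma~\ref{LM:varphi_extended}(iv); this is precisely where the collapse of the slit (Lemma~\ref{LM:ArcsShrinks}) together with the fact that $\Im p>0$ is used, and once it is in place no real obstacle remains.
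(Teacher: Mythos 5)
Your proof is correct and follows essentially the same route as the paper: the semigroup identity $\varphi_{s,t}=\varphi_{u,t}\circ\varphi_{s,u}$, the shrinking of $\mathcal I_{u,t}=\bar{\mathcal J}_{u,t}\cup\bar{\mathcal J}_{u,t}^{*}$ to $\lambda(t)$ from Lemma~\ref{LM:ArcsShrinks}, and the distortion estimate of Lemma~\ref{LM:varphi_extended}\,(iv) applied to $\varphi^{*}_{u,t}$ with $j=2$. You merely spell out explicitly (and correctly) the verification of the hypothesis $\dist\big(\varphi_{s,t}(z),\mathcal I_{u,t}\big)>\diam\mathcal I_{u,t}$ via $\Im\varphi_{s,t}(z)>0$, which the paper leaves implicit.
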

\begin{proof}
Denote $z_1=z_1(u):=\varphi_{s,u}(z)$ and $z_2:=\varphi_{s,t}(z)$. By Lemma~\ref{LM_varphi_st}\,(i), $z_2=\varphi_{u,t}(z_1)$. Note that by Lemma~\ref{LM:ArcsShrinks}, $\mathcal I_{u,t}:=\bar{\mathcal J}_{u,t}\cup \bar{\mathcal J}_{u,t}^*$ shrinks to the point $\lambda(t)\in\partial\uH$ as $u\uparrow t$ while $z_2\in\uH$ does not depend on~$u$. Hence, as in the proof of Lemma~\ref{LM:phi_st}, one can apply Lemma~\ref{LM:varphi_extended}\,(iv) with $\varphi:=\varphi_{u,t}^*$ to conclude that $z_1(u)\to z_2$ as $u\uparrow t$, which was to be shown.
\end{proof}

\begin{proof}[\textbf{Proof of Theorem~\ref{TH_Kuf-et-al_ODE}}]
Define $\lambda(t)$, as in Section~\ref{SS_standParam}, to be the unique preimage of~$\Gamma(t)$ under the map~$g_t$.
Then by Corollary~\ref{CR_lambda-continuous} the function~$\lambda$ is continuous on~$[0,T]$.
We are going to prove that $(\partial/\partial t) \varphi_{s,t}(z)$ exists and equals $1/\big(\lambda(t)-\varphi_{s,t}(z)\big)$ for any $z\in\uH$ and any $s,t\in[0,T]$ with $s\le t$. This will prove the existence of $\lambda$, while the uniqueness takes place because the function~$\lambda$ is determined uniquely by any solution to~\eqref{EQ_ini-chordal-L-ODE} with~$s=0$.

Let us fix $z\in\uH$ and $s,t\in[0,T]$ with $s\le t$.

\step1{we prove the existence and calculate the left derivative of $~t\mapsto\varphi_{s,t}(z)$.} So we assume $s<t$.
Take any $u\in[s,t)$. Then it follows from Lemma~\ref{LM_varphi_st} applied with $\zeta:=\varphi_{s,u}(z)$ that
\begin{equation*}
\frac{\varphi_{s,t}(z)-\varphi_{s,u}(z)}{t-u}=\frac{\varphi_{u,t}(\zeta)-\zeta}{t-u}= \raise1.5ex\hbox{$\displaystyle\int_{\mathcal C_{u,t}\!}\frac{\Im\{\varphi_{u,t}(\xi)\}}{\xi - \varphi_{s,u}(z)}\;d\xi$}%
\raise.3ex\hbox{$\Bigg/$}
\raise-.2ex\hbox{$\displaystyle\int^{\vphantom1}_{\mathcal{C}_{u,t}}\!\!\!\Im\big\lbrace\varphi_{u,t}(\xi)\big\rbrace\; d\xi$\,.}
\end{equation*}
Note that $\Im\big\lbrace\varphi_{u,t}(\xi)\big\rbrace\ge0$ for all $\xi\in\mathcal C_{u,t}$. By Lemma~\ref{LM:ArcsShrinks} the segment $\mathcal C_{u,t}$ tends to~$\lambda(t)$, while by  Lemma~\ref{LM:continuity-in-s}, $\varphi_{s,u}(z)\to\varphi_{s,t}(z)$ as  $u\uparrow t$. Hence using the Integral Mean Value Theorem, separately for the real and imaginary parts of $1/\big(\xi - \varphi_{s,u}(z)\big)$, we conclude that $\big(\varphi_{s,t}(z)-\varphi_{s,u}(z)\big)/(t-u)\longrightarrow 1/\big(\lambda(t) - \varphi_{s,t}(z)\big)$ as $u\uparrow t$.

\step2{now we prove the existence  and calculate the right derivative of $~t\mapsto\varphi_{s,t}(z)$.} We assume $t<T$.
Take any $u\in(t,T]$. Similarly to Step 1,
\begin{equation*}
\frac{\varphi_{s,u}(z)-\varphi_{s,t}(z)}{u-t}=\raise1.5ex\hbox{$\displaystyle\int_{\mathcal C_{t,u}\!}\frac{\Im\{\varphi_{t,u}(\xi)\}}{\xi - \varphi_{s,t}(z)}\;d\xi$}%
\raise.3ex\hbox{$\Bigg/$}
\raise-.2ex\hbox{$\displaystyle\int^{\vphantom1}_{\mathcal{C}_{t,u}}\!\!\!\Im\big\lbrace\varphi_{t,u}(\xi)\big\rbrace\; d\xi$\,.}
\end{equation*}
By Lemma~\ref{LM:ArcsShrinks} the segment $\mathcal C_{t,u}$ shrinks to~$\lambda(t)$ as $u\downarrow t$. Using again the Integral Mean Value Theorem, we see that $\big(\varphi_{s,u}(z)-\varphi_{s,t}(z)\big)/(u-t)\longrightarrow 1/\big(\lambda(t) - \varphi_{s,t}(z)\big)$ as $u\downarrow t.$

\step3{it remains to see that the solution to~\eqref{EQ_ini-chordal-L-ODE} is unique.} Notice that the vector field in the r.h.s. of~\eqref{EQ_ini-chordal-L-ODE}, $G(w)=1/(\lambda(t)-w)$, is Lipschitz continuous in~$w$ on every compact subset of~$\uH$, with the Lipschitz constant independent of~$t$. It remains to appeal to the standard uniqueness and existence theorem for initial value problems, see, \textsl{e.g.}, \cite[Chapter II, Theorem 1.1]{Hartman:OrdinaryDiffEquations}.
\end{proof}


\subsection{Some remarks}\label{SS_remarks}
First of all let us place a couple of remarks regarding Theorem~\ref{TH_Kuf-et-al_ODE}.

\begin{remark}\label{RM_ths_equiv}
Let us recall that under assumptions of Section~\ref{SS_proof}, $\varphi_{s,T}=g_s$ for all $s\in[0,T]$. Hence the chordal Loewner PDE
\begin{equation}\label{EQ_PDE_again}
\frac{\partial g_s(z)}{\partial s}=-\frac{g_s'(z)}{\lambda(s)-z}
\end{equation}
comes out of~\eqref{EQ_ini-chordal-L-ODE}  by appealing to the classical theorem about the dependence of solutions to an ODE on the initial data, see, \textsl{e.g.}, \cite[Chapter~V, Theorem~3.1]{Hartman:OrdinaryDiffEquations}. It is also quite easy to see that this PDE enforces the coefficient~$c_1(t)$ of $z^{-1}$ in the expansion of $g_t$ to be equal to $c_1(0)+t$. Therefore, Theorem~\ref{TH_Kufarev_etal} is, in fact, equivalent to Theorem~\ref{TH_Kuf-et-al_ODE}.
\end{remark}

\begin{remark}\label{RM_slit-non-slit}
Theorem~\ref{TH_Kuf-et-al_ODE} means, in particular, that the information about every slit~$\gamma$ is encoded in the corresponding unique real-valued function~$\lambda$. A natural question is whether the converse statement holds, \textsl{i.e.}, whether \emph{any} continuous real-valued function~$\lambda$ defined on a closed interval corresponds to a slit in~$\uH$. The answer is \emph{``no in general''}. A kind of converse theorem holds, but it only states that if $T>0$ and $\lambda:[0,T]\to\Real$ is a continuous (or, more generally, bounded measurable) function, then there exists a unique family $(g_t)_{t\in[0,T]}$ of univalent holomorphic self-maps of~$\uH$ with the hydrodynamic normalization such that for any $s\in[0,T)$ and $z\in\uH$ the function $w=w_{z,s}(t):=(g_t^{-1}\circ g_s)(t)$ solves the Cauchy problem~\eqref{EQ_ini-chordal-L-ODE}. However, $g_t$'s do \emph{not} need to be single-slit mappings. For further discussion and results in this direction see, \emph{e.g.}, \cite{Zhora:Non-slit} and references cited therein.
\end{remark}

\begin{unnumremark}
In the modern literature it seems to be a convention to rescale the independent variable~$t$ in the Loewner chordal equation, which leads to the extra coefficient 2 in the right-hand side:
\begin{equation}\label{EQ_chordal_ODE_with2}
\frac{dw(t)}{dt}=\frac{2}{\lambda(t)-w(t)}.
\end{equation}
This ``cosmetical'' change plays some role when comparing the chordal and radial Loewner ODEs, especially in connection with the question mention in Remark~\ref{RM_slit-non-slit}, see, \textsl{e.g.}, \cite{ProkhVas:singular_and_tangent}.
\end{unnumremark}

\begin{unnumremark}
Return again to the family $(g_t)_{t\in[0,T]}$ introduced in Section~\ref{SS_proof}. Consider the family of the inverse conformal mappings $(h_t)_{t\in[0,T]}$, $h_t:=g_t^{-1}:\uH\setminus\gamma_t\Maponto\uH$. Since $g_t(z)$ is of class $C^1$ jointly in~$z$ and~$t$, it follows from~\eqref{EQ_PDE_again} that $t\mapsto h_t(z)$ solves the chordal Loewner ODE. More precisely,
$$
\frac{\partial h_t(z)}{\partial t}=\frac{1}{\lambda(t)-h_t(z)}\quad\text{for all~$t\in[0,T]$ and all~$z\in H_t:=\uH\setminus\gamma_t$}.
$$
Although $(h_t)$ satisfies the same equation as $(\varphi_{s,t})$, the initial condition for~$(h_t)$ is given at the  \emph{right end-point}, $h_t|_{t=T}=\id_\uH$. Introducing the new parameter $\tau=T-t$ moves the initial condition to the left end-point~$\tau=0$ and brings the sign ``$-$'' to the right-hand side of the above equation. What is more important, this trick allows one to consider all $\tau\ge0$ and therefore to describe, by means of the chordal Loewner equation, cross-cuts in~$\uH$, \textsl{i.e.} Jordan arcs $\Gamma:[0,+\infty]\to\uHc$ joining, like a chord, two points on the boundary, $\Gamma(0)\in\Real$ and $\Gamma(+\infty)=\infty$, and otherwise lying in $\uH$. (This seems to be a plausible explanation for the word ``chordal'' in the name of the equation.)
\end{unnumremark}

The above two remarks bring the original chordal Loewner equation~\eqref{EQ_ODE-Loewner-chordal} to the form, which prevails in the recent literature:
$$
\frac{dw(t)}{dt}=\frac2{w(t)-\xi(t)},\quad  t\ge0,\qquad w(0)=z,
$$
where $\xi:[0,+\infty)\to\Real$ is a continuous function. As a function of the initial value~$z$, $w(t)$ maps its domain, \textsl{i.e.} the set of all~$z\in\uH$ for which the life-span~$T(z)$ of the solution to the above Cauchy problem is greater~$t$, conformally onto~$\uH$ and has the following expansion at~$\infty$,
$$
w=z+\frac{2t}{z}+\sum_{n=2}^{+\infty}a_n(t)z^{-n}.
$$

In the last lines of this survey paper,
it could be appropriate to mention that, up to our best knowledge, the chordal Loewner ordinary differential equation appeared for the first time as early as in 1946 (although without any further development) in Kufarev's paper~\cite{Kufarev1946}, the first paper approaching the problem indicated in Remark~\ref{RM_slit-non-slit}.

\end{document}